\titleformat{\section}[hang]							%
{\bfseries\large}{\thesection.}{0.5em}{}[]				%
\titlespacing*{\section}{0em}{2em}{1.5em}				%
\titleformat{\subsection}[runin]						%
{\bfseries\normalsize}{\thesubsection.}{0em}{\ }[.]		%
\theoremstyle{plain}								%
\newtheorem{thm}{Theorem}[section]					%
\newtheorem{prop}[thm]{Proposition}					%
\newtheorem{lem}[thm]{Lemma}						%
\theoremstyle{definition}							%
\newtheorem{defn}[thm]{Definition}					%
\theoremstyle{remark}								%
\newtheorem{rem}[thm]{Remark}						%
\numberwithin{equation}{section}						%
\DeclareMathOperator*{\essinf}{essinf}				%essential infimum
\DeclareMathOperator*{\esslim}{esslim}				%essential limit
\DeclareMathOperator{\sgn}{sgn}						%signal
\newcommand{\Romannum}[1]{\uppercase\expandafter{\romannumeral#1\relax}}
\newcommand{\romannum}[1]{\romannumeral#1\relax}
\def \ep {\epsilon}			%epsilon
\def \ve {\varepsilon}			%var epsilon
\def \vf {\varphi}				%var phi
\def \bE {\mathbb E}			%expectation
\def \bP {\mathbb P}			%measure
\def \bR {\mathbb R}			%real
\def \bZ {\mathbb Z}			%integer
\def \cA {\mathcal A}			%
\def \cC {\mathcal C}			%
\def \cS {\mathcal S}			%
\def \fD {\mathfrak D}			%dirichlet form
\def \fQ {\mathfrak Q}			%measure
\def \CS {Cauchy--Schwarz }
\def \exc {\mathrm{exc}}
\def \LS {\mathrm{LS}}
\def \Ls {L_\mathrm{SS}}
\def \Lta {L_\mathrm{TAS}}
\def \wts {\widetilde\sigma}
\begin{document}

\pagestyle{plain}
\pagenumbering{arabic}
\bibliographystyle{plain}

\title{Hydrodynamic limit for asymmetric simple exclusion \\with accelerated boundaries}
\author{\textsc{Lu XU}}
\date{}
\maketitle

%{\let\thefootnote\relax\footnote{\today, \jobname.tex}}

%%%%%%%%%%%%%%%%%%%%%%%%%%%%%%%%%%%%%%%%

\begin{abstract}
We consider the asymmetric simple exclusion process (ASEP) on the one-dimensional finite lattice $\{1,2,\ldots,N\}$.
The particles can be created/annihilated at the boundaries with given rates.
These rates are $L^\infty$ functions of time and are independent of the jump rates in the bulk (cf. \cite{Baha12}).
The boundary dynamics is modified by a factor $N^\theta$ with $\theta>0$.
We study the hydrodynamic limit for the particle density profile under the hyperbolic space-time scale.
The macroscopic equation is given by (inviscid) Burgers equation with Dirichlet type boundaries that is characterized by the boundary entropy \cite{Otto96}.
A grading scheme is developed to control the formulation of boundary layers on the microscopic level.

\bigskip

Nous consid\'{e}rons le processus d'exclusion simple asym\'{e}trique (ASEP) sur le r\'{e}seau fini unidimensionnel $\{1,2,\ldots ,N\}$.
Les particules peuvent \^{e}tre cr\'{e}\'{e}es/annihil\'{e}es sur les points de fronti\`{e}re avec des taux qui sont des fonctions $L^{\infty}$ du temps et sont ind\'{e}pendants des taux de saut \`{a} l'int\'{e}rieur du syst\`{e}me (cf. \cite{Baha12}).
La dynamique des bords est modifi\'{e}e d'un facteur $N^{\theta}$ avec $\theta >0$.
Nous \'{e}tudions la limite hydrodynamique du profil de densit\'{e} des particules sous l'\'{e}chelle espace-temps hyperbolique.
L'\'{e}quation macroscopique est donn\'{e}e par l'\'{e}quation (non visqueuse) de Burgers avec des conditions aux limites caract\'{e}ris\'{e}es par l'entropie du processus sur les points de fronti\`{e}re \cite{Otto96}.
Un sch\'{e}ma adapt\'{e} est d\'{e}velopp\'{e} pour contr\^{o}ler la formulation des couches limites au niveau microscopique.
\end{abstract}

\section{Introduction}

Exclusion process in contact with reservoirs is a widely studied model in non-equilibrium statistical physics.
In its dynamics, each particle performs a random walk on the finite lattice $\{1,2,\ldots,N\}^d$ in accordance with the exclusion rule: if the target site is occupied, the corresponding jump will be suppressed.
At boundaries, the particles can be removed from or added into the system with given rates.
For symmetric dynamics, the hydrodynamic limit for the particle density is given by diffusion equation with Dirichlet boundary conditions imposed by the reservoirs, see \cite{ELS90,ELS91,KipnisL99,LMO08} and the reference therein.
If the bulk dynamics performs weak asymmetry, viscous Burgers equation is obtained in the limit \cite{BEMM04,BLM09}.
More recent works succeed in getting varies types of boundary conditions by modifying the strength of reservoirs, for instance \cite{BMNS17,CapiG21,FGN19}.

The situation differs drastically when a strong drift exists. 
Liggett \cite{Liggett75} first introduced the asymmetric simple exclusion process (ASEP) with open boundaries as a tool for studying the dynamics of the infinite system. 
For one-dimensional case, the stationary states are studied in \cite{BCEPR06,DEHP93,USW04}.
They are given by uniform bulk density which is determined by the reservoir rates through three phases: the high-density phase, the low-density phase, and the max-current phase.
The high-density phase and the low-density phase intersect at a critical line, where the bulk density performs a randomly located shock.
These stationary states satisfy the variational property \cite{PopkovS99}: the current is minimized if $\rho_-<\rho_+$ (drift up-hill) and is maximized if $\rho_->\rho_+$ (drift down-hill), where $\rho_\pm$ are the left and right boundary rates, respectively.
The hydrodynamic limit for ASEP on $\bZ^d$ is obtained in \cite{Reza91} under the hyperbolic space-time scale.
The celebrated result of \cite{Baha12} proves that the hydrodynamic limit for a class of boundary driven asymmetric particle systems is given by the unique entropy solution to the initial--boundary problem of hyperbolic conservation law with boundary conditions introduced in \cite{BLN79}.
Different from the Dirichlet conditions in diffusion equation, these boundary conditions do not fix the boundary value of the solution.
Instead, they impose a set of possible boundary values depending on the boundary data.
The proof in \cite{Baha12} relies on a special choice of reservoir densities that approximates optimally the infinite dynamics (see Remark \ref{rem:liggett} for details), thus allows the application of an effective coupling method.
Also note that all these previous results are restricted to reservoir densities that do not depend on time.

Recently, the ASEP connected to general reservoirs whose densities vary with time and are regulated by a factor $N^\theta$ is studied \cite{DMOX22,Xu22}.
When $\theta<0$, the reservoirs are slowed down and the hydrodynamic limit is given by the entropy solution to Burgers equation subjected to degenerated boundary traces \cite{Xu22}.
The proof in \cite{Xu22} relies on the vanishing boundary drift created by the slow reservoirs.
When $\theta>0$ and the reservoir densities are smooth functions of time, \cite{DMOX22} obtained the \emph{quasi-static limit}: under the time scale longer than the hyperbolic one, the particle density is determined by the quasi-stationary solution \cite{MOX21} to Burgers equation, that is, the stationary solution associated to dynamical boundary data.
The phase diagram \cite{DEHP93} and the variational property \cite{PopkovS99} are also reproduced, see Section \ref{subsec:qs}.
The proof in \cite{DMOX22} is established upon a fine estimate on the boundary entropy production for the balanced dynamics where the reservoirs impose the same density at left and right boundaries.
Then, the results are extended to unbalanced case by using a coupling method, which relies on the fact that the quasi-static dynamics always stays close to stationary states.

In the present article, we consider the ASEP in contact to reversible reservoirs whose densities are general $L^\infty$ functions of time and are accelerated by $N^\theta$ with $\theta>0$.
We prove in Theorem \ref{thm:hyd} that, under the hyperbolic time scale, the density profile evolves with the entropy solution to Burgers equation with boundary conditions characterized by boundary entropy flux pairs \cite{Otto96}.
Since the strengthened reservoirs dominate the drift, the \emph{formal} boundary values in the macroscopic equation coincide with the reversible densities of the reservoirs, see Remark \ref{rem:bd}.

The main difference from the previous works is that, as stated above, the coupling method used in \cite{Baha12} is not available in this general setting of reservoir dynamics.
Furthermore, since the dynamics does not have enough time to relax to the stationary state, the strategy in \cite{DMOX22} is not applicable.
Our new approach contains two main steps.
First, we apply the stochastic compensated compactness developed in \cite{Fritz04} to show the existence of a random $L^\infty$ function as the limit density profile.
Then, we introduce a grading scheme, which helps to extend the estimate on the boundary entropy production proved in \cite{DMOX22} to general unbalanced dynamics.
The key point in this step is the auxiliary functions defined in \eqref{eq:auxiliary}, which slices the divergent boundary integral into pieces with an exact cancellation, see Lemma \ref{lem:esti-s-bd} and \ref{lem:esti-b}.
The proof is finally closed by the uniqueness of the entropy solution established for scalar conservation law on a bounded domain \cite{MNRR96,Otto96}.

The technique developed in this article does not rely on the attractiveness of the dynamics, thus it is expected to be applicable to non-attractive systems, such as non-attractive asymmetric zero-range process.
Meanwhile, the hydrodynamic limit remains an open problem for $\theta =0$ except the regime treated in \cite{Baha12}.
We expect the macroscopic boundary conditions to be given by the vanishing viscosity limit obtained in \cite[Theorem 2.12]{Xu22}.

\bigskip
\noindent\textbf{Keywords.}
Asymmetric simple exclusion process, Open boundary, Hydrodynamic limit, Entropy solution. 

\bigskip
\noindent\textbf{Acknowledgments.}
The author greatly thanks Stefano Olla for suggesting him to study this problem as well as his inspiring advices. 

\section{One-dimensional ASEP with open boundaries}
\label{sec:asep}

The one-dimensional asymmetric simple exclusion process with open boundaries is a Markov process defined on the configuration space
\begin{align}
  \Omega_N := \big\{\eta=(\eta_1,\eta_2,\ldots,\eta_N),\ \eta_i\in\{0,1\}\big\}, \quad N\ge2.
\end{align}
Its infinitesimal generator is given by
\begin{align}
  L_N = p\Lta + \sigma\Ls + L_- + L_+,
\end{align}
where $p$ and $\sigma$ are strictly positive constants.
Here $\Lta$ and $\Ls$ generate respectively the totally asymmetric exclusion and symmetric exclusion in bulk:
\begin{equation}
\label{eq:gen-exc}
  \begin{split}
    \Lta f &:= \sum_{i=1}^{N-1} \eta_i(1-\eta_{i+1})\big[f(\eta^{i,i+1})-f(\eta)\big],\\
    \Ls f &:= \sum_{i=1}^{N-1}\big[f(\eta^{i,i+1})-f(\eta)\big],
  \end{split}
\end{equation}
for any function $f$ on $\Omega_N$, where $\eta^{i,i+1}$ is the configuration obtained from $\eta$ upon exchanging $\eta_i$ and $\eta_{i+1}$.
The boundary generators $L_\pm$ generate the reservoirs that creates/annihilates particles at $i=1$ and $i=N$ with given rates $\alpha$, $\beta$, $\gamma$, $\delta>0$:
\begin{equation}
\label{eq:gen-bd}
  \begin{split}
    L_-f &:= [\alpha({1-\eta_1})+\gamma\eta_1]\big[f(\eta^1)-f(\eta)\big],\\
    L_+f &:= [\delta({1-\eta_N})+\beta\eta_N]\big[f(\eta^N)-f(\eta)\big],
  \end{split}
\end{equation}
where $\eta^i$ is the configuration obtained from $\eta$ by shifting the status at site $i$ from $\eta_i$ to $1-\eta_i$. 
Note that $L_\pm$ are respectively reversible for the Bernoulli measure with densities
\begin{align}
  \rho_-:=\frac\alpha{\alpha+\gamma}, \quad \rho_+:=\frac\delta{\beta+\delta}.
\end{align}

\begin{rem}[Liggett's boundaries]
\label{rem:liggett}
In \cite{Baha12,DEHP93,Liggett75} a specific choice of boundary parameters $\alpha$, $\beta$, $\gamma$, $\delta$ are adopted.
Given some $\bar\rho_\pm\in[0,1]$, define
\begin{equation}
\label{eq:liggett}
  \begin{aligned}
    &\alpha:=(p+\sigma)\bar\rho_-, \quad \gamma:=\sigma(1-\bar\rho_-),\\
    &\beta:=(p+\sigma)(1-\bar\rho_+), \quad \delta:=\sigma\bar\rho_+.
  \end{aligned}
\end{equation}
This choice corresponds to the projection on the finite interval $\{1,\ldots,N\}$ of the infinite ASEP dynamics with jump rate $p+\sigma$ to the right and $\sigma$ to the left, where particles are distributed according to Bernoulli measure with density $\bar\rho_-$ on $\{0, -1, \ldots\}$ and density $\bar\rho_+$ on $\{N+1,N+2,\ldots\}$.
If furthermore $\bar\rho_-=\bar\rho_+=\bar\rho$, the homogeneous Bernoulli measure with density $\bar\rho$ is stationary with respect to the dynamics. 
Observe that due to the non-vanishing current, the stationary densities differ from the densities of the reservoirs. 
\end{rem}

\subsection{Parameters}
We want to treat boundary rates that vary non-smoothly with time.
Consider positive functions $\alpha$, $\beta$, $\gamma$, $\delta \in L^\infty(\bR_+)$ and define operators $L_{\pm,t}$ by \eqref{eq:gen-bd} with $(\alpha,\beta,\gamma,\delta)(t)$.
Also pick two sequences $\sigma_N$ and $\wts_N$ satisfying that
\begin{align}
\label{eq:assp}
  \lim_{N\to\infty} N^{\frac57}\sigma_N^{-1} = \lim_{N\to\infty} N^{-1}\sigma_N = 0,
  \quad \lim_{N\to\infty} \wts_N = \infty.
\end{align}
Define the infinitesimal generator $L_{N,t}$ as
\begin{align}
\label{eq:gen}
  L_{N,t} = p\Lta + \sigma_N\Ls + \wts_N\big(L_{-,t} + L_{+,t}\big).
\end{align}
In \eqref{eq:gen}, the symmetric exclusion is accelerated by $\sigma_N$ in order to enhance convergence to local equilibrium at a mesoscopic scale, while the reservoirs are strengthened by $\wts_N$ to fix the densities at $1$ and $N$, see Proposition \ref{prop:bd-one-block}.
The technical assumption $\sigma_N \gg N^\frac57$ allows us to choose the specific mesoscopic scale in \eqref{eq:assp-k}.
Moreover, this dynamic is NOT the so called \emph{weakly asymmetric exclusion} since the asymmetry is strong enough to survive in the hyperbolic space-time scaling limit. 
From a macroscopic point of view, $\sigma_N\Ls$ plays the role of vanishing viscosity in the hyperbolic conservation law. 

\subsection{Hydrodynamic limit}
Let $\eta(t)=(\eta_1(t),\ldots,\eta_N(t)) \in \Omega_N$ be the Markov process generated by $NL_{N,t}$ and some initial distribution $\mu_{N,0}$.
Assume that for all $\phi \in \cC([0,1])$ and $\ve>0$,
\begin{align}
\label{eq:assp-initial}
  \lim_{N\to\infty} \mu_{N,0} \left\{ \left| \frac1N\sum_{i=1}^N \phi \left( \frac iN \right) \eta_i - \int_0^1 \phi(x)u_0(x)dx \right| > \ve \right\} = 0
\end{align}
with some $u_0 \in L^\infty([0,1])$.
Let $\chi_{i,N}$ be the indicator function 
\begin{align}
\label{eq:indicator}
  \chi_{i,N}(x) :=
  \mathbf1_{\left\{\left[\frac iN-\frac1{2N}, \frac iN+\frac1{2N}\right) \cap [0,1]\right\}}(x),
  \quad \forall\,x\in[0,1].
\end{align}
For each $N$, define the empirical density $\zeta_N = \zeta_N(t,x)$ as
\begin{align}
\label{eq:empirical}
  \zeta_N(t,x) := \sum_{i=1}^N \eta_i(t)\chi_{i,N}(x), \quad (t,x) \in [0,+\infty)\times[0,1].
\end{align}
Our aim is to show that, as $N \to \infty$, $\zeta_N$ converges to the $L^\infty$ weak entropy solution to the one-dimensional inviscid Burgers equation
\begin{align}
\label{eq:cl1}
  \partial_tu(t,x) + p\partial_xJ(u(t,x)) = 0, \quad x\in (0,1), \quad J(u) = u(1-u),
\end{align}
with $L^\infty$ boundary and initial data formally given by
\begin{align}
\label{eq:cl2}
  u|_{x=0}=\rho_-(t):=\frac{\alpha(t)}{\alpha(t)+\gamma(t)}, \quad u|_{x=1}=\rho_+(t):=\frac{\delta(t)}{\beta(t)+\delta(t)}, \quad u|_{t=0}=u_0.
\end{align}
Notice that \eqref{eq:cl2} is not satisfied in the classical sense.
Instead, it allows discontinuity at the boundaries, usually called the boundary layers, by imposing a set of possible boundary values relevant to $\rho_\pm(t)$.
Rigorous definition of the entropy solution to \eqref{eq:cl1}--\eqref{eq:cl2} is stated in Section \ref{sec:burgers}. 
Our main result is stated below. 

\begin{thm}[Hyperbolic hydrodynamic limit]
\label{thm:hyd}
Suppose \eqref{eq:assp}, \eqref{eq:assp-initial} and that 
\begin{align}
\label{eq:assp-bd}
  \essinf_{0 \le t \le T} \big\{\alpha(t), \beta(t), \gamma(t), \delta(t)\big\} > 0
\end{align}
for some $T>0$. 
%For any local function $f$ on $\Omega_N$ and $\psi\in\cC(\bR\times[0,1])$, we have the following convergence in probability: 
%\begin{align}
%\label{eq:convergence}
%  \lim_{N\to\infty} \int_0^T \frac1N\sum_{i=1}^{N-k_f} \psi \left( t,\frac iN \right)f\big(\tau_i\eta(t)\big)dt = \int_0^T \int_0^1 \psi(t,x) \left[ \int fd\nu_{\rho(t,x)} \right] dx\,dt, 
%\end{align}
%where $\tau_i$ is the shift operator defined as $(\tau_i\eta)_{i'} := \eta_{i'+i}$, $[[1,k_f]]$ is the support of $f$ and $\rho=\rho(t,x)$ is the entropy solution of \eqref{eq:cl1}--\eqref{eq:cl2}. 
For any $\psi\in\cC(\bR^2)$, the following limit holds in probability: 
\begin{align}
\label{eq:convergence}
  \lim_{N\to\infty} \int_0^T \frac1N\sum_{i=1}^N \psi \left( t,\frac iN \right)\eta_i(t)dt = \int_0^T \!\!\! \int_0^1 \psi(t,x)\rho(t,x)dx\,dt, 
\end{align}
where $\rho=\rho(t,x)$ is the entropy solution to \eqref{eq:cl1}--\eqref{eq:cl2}. 
\end{thm}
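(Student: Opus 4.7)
The plan is to carry out the three-step program sketched in the introduction: obtain a subsequential $L^\infty$ limit $\rho(t,x)$ of the empirical density $\zeta_N$ via stochastic compensated compactness; show that $\rho$ is an entropy solution of \eqref{eq:cl1}--\eqref{eq:cl2} in the sense of \cite{Otto96}; conclude by uniqueness of such solutions.

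For the first step I would apply the \Ito formula to $S(\zeta_N)$ for a smooth entropy $S$ with conjugate flux satisfying $Q'=pS'J'$. The drift produced by $NL_{N,t}$ splits into a bulk contribution coming from $p\Lta+\sigma_N\Ls$ and two boundary contributions of order $\wts_N$ localised at sites $1$ and $N$. Standard one- and two-block estimates, at the mesoscopic scale \eqref{eq:assp-k} made available by the assumption $\sigma_N\gg N^{5/7}$ in \eqref{eq:assp}, decompose the bulk entropy production into an $H^{-1}_{\mathrm{loc}}$-compact part plus a negative measure, while the martingale remainder vanishes through its quadratic variation. Feeding this into Murat's lemma and Tartar's commutation relation \cite{Fritz04} forces the associated Young measure to be almost surely a Dirac mass, yielding a random profile $\rho\in L^\infty([0,T]\times[0,1])$ along subsequences.

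The core and anticipated main obstacle is verifying that $\rho$ satisfies Otto's boundary entropy inequality. For a boundary entropy-flux pair $(S(\cdot,k),Q(\cdot,k))$ with an arbitrary parameter $k$, the same \Ito decomposition produces boundary contributions proportional to $\wts_N$ that carry no definite sign unless $k$ is tuned to the reversibility density $\rho_\pm(t)$. The remedy, following the grading scheme sketched in the introduction, is to use the auxiliary functions of \eqref{eq:auxiliary} to slice the dependence on $k$ between $\rho_\pm(t)$ and the microscopic boundary occupation into levels whose increments each have a controlled sign; at each level the reversibility of $L_{\pm,t}$ with respect to Bernoulli$(\rho_\pm(t))$, together with the strict positivity \eqref{eq:assp-bd}, supplies the cancellation that absorbs the diverging prefactor $\wts_N$. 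Summing telescopically, these increments reconstruct exactly the Otto boundary dissipation, which is what Lemmas \ref{lem:esti-s-bd} and \ref{lem:esti-b} are designed to produce. The real technical crux is extending the balanced boundary estimate of \cite{DMOX21} to arbitrary unbalanced dynamics at the hyperbolic scale, since neither the coupling of \cite{Baha12} nor the quasi-static closeness exploited in \cite{DMOX21} is available in the present setting.

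Combining the bulk and boundary entropy inequalities with the initial condition \eqref{eq:assp-initial}, $\rho$ becomes a Kruzhkov--Otto entropy solution of \eqref{eq:cl1}--\eqref{eq:cl2}. Uniqueness of such solutions \cite{Otto96,MNRR96} removes the dependence on the subsequence and upgrades convergence to the full sequence, delivering \eqref{eq:convergence}. A secondary difficulty I foresee is the compatibility between the mesoscopic block size used for the bulk compensated-compactness estimate and the resolution of the grading scheme used at the boundary, since the two procedures compete for the $\sigma_N$ budget granted by \eqref{eq:assp}.
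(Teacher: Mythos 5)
Your global architecture (compensated compactness giving a Dirac Young measure along subsequences, then Otto's boundary entropy inequality, then uniqueness) is the paper's, and the bulk step is described correctly. The gap is in the boundary step, which is the heart of the argument, and where your proposed mechanism does not match what is actually needed. First, in the paper there are \emph{no} drift terms of order $\wts_N$ to absorb: the entropy production is computed not for $S(\zeta_N)$ but for the smoothed averages $\hat\eta_{i,K}$ of \eqref{eq:smooth-block} with $K+1\le i\le N-K$, which are supported on $\{\eta_2,\dots,\eta_{N-1}\}$, so $L_{\pm,t}$ never acts on them. Your plan of applying \Ito's formula to $S(\zeta_N)$ directly would produce genuine $\wts_N$-order boundary terms, and you offer no workable way to control them. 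The divergence that actually has to be tamed is the $\sigma_N$-order viscous flux through the boundary layer: for a test function $\psi$ that does not vanish at $x=0,1$, summation by parts in $\cS_N(\psi)$ leaves boundary integrals carrying the factor $\sigma_N$. Second, the auxiliary functions \eqref{eq:auxiliary} are not a grading in the entropy parameter $k$; they are purely spatial cutoffs $\alpha_N(x)$, vanishing at $x=0,1$, whose exponential profile is engineered so that $\sigma_N\nabla\alpha^N_i=\pm(1-\alpha^N_{i+1})$. This exact identity is what produces the telescoping cancellation between the divergent term $\sigma_N\sum_i\nabla(\psi_i f(\hat\eta_i))\nabla\alpha^N_i$ coming from the viscosity (Lemma \ref{lem:esti-s-bd}) and the term $\mathcal B_N=\sum_i q(\hat\eta_i)\psi_i\nabla\alpha^N_i$ generated by $\partial_x(\psi\alpha_N)$ (Lemma \ref{lem:esti-b}), the sign being supplied by $|q|\le f$ from the boundary entropy structure \eqref{eq:esti-ent} and the convexity of $f$, not by reversibility of $L_{\pm,t}$ level by level.

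Finally, you omit the step that actually identifies the boundary data. The role of $\wts_N\to\infty$ is to make the boundary Dirichlet forms of order $\wts_N^{-1}$ (Proposition \ref{prop:dir}) and hence to pin the boundary block averages to the reversible reservoir densities, $\hat\eta_K\approx\rho_-(t)$ and $\hat\eta_{N-K}\approx\rho_+(t)$ (Proposition \ref{prop:bd-one-block}); this is what converts the leftover terms $f(\hat\eta_{K+1})\psi_{K+1}+f(\hat\eta_{N-K+1})\psi_{N-K+1}$ into exactly the Otto boundary contribution $f(\rho_\pm)\psi(\cdot,0),\,f(\rho_\pm)\psi(\cdot,1)$ in \eqref{eq:ent-sol}. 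Without this identification, and with the cancellation mechanism misattributed, the boundary entropy inequality is not established, so the proposal as written does not close.
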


\begin{rem}[Boundary values]
\label{rem:bd}
Without the factor $\wts_N$ speeding up the reservoirs, we conjecture the boundary values are given by some $\tilde\rho_-(t)=\tilde\rho_-(p,\alpha,\gamma)$ and $\tilde\rho_+(t)=\tilde\rho_+(p,\beta,\delta)$.
Due to the non-zero drift from $0$ to $1$, it is expected that $\tilde\rho_-<\rho_-$, $\tilde\rho_+>\rho_+$.
For the particular case where $(\alpha,\beta,\gamma,\delta)$ are determined through \eqref{eq:liggett} with parameters $\bar\rho_\pm$ that is constant in time, the boundary conditions are given by \cite{Baha12}
\begin{align}
  u(t,0)=\bar\rho_-, \quad u(t,1)=\bar\rho_+. 
\end{align}
The result for general case remains an open problem.
\end{rem}

\subsection{A few words on the quasi-stationary profile}
\label{subsec:qs}

The \emph{quasi-stationary} solution to \eqref{eq:cl1}--\eqref{eq:cl2} is defined as the limit, when $\ep\to0^+$, of the entropy solution to
\begin{align}
  \ep\partial_tu^\ep+p\partial_xJ(u^\ep)=0, \quad u^\ep|_{x=0}=\rho_-, \quad u^\ep|_{x=1}=\rho_+, \quad u^\ep|_{t=0}=u_0.
\end{align}
Excluding the critical phase given by
\begin{align}
  (\rho_-,\rho_+) \in \big\{(a,b)\in(0,1)^2,\ a<1/2,\ a+b=1\big\},
\end{align}
this limit is constant in $x$ and is explicitly given in \cite{MOX21}:
\begin{align}
  \bar u(t) =
  \begin{cases}
    \rho_-(t), &\text{if } \rho_-(t)<1/2,\ \rho_-(t)<1-\rho_+(t),\\
    \rho_+(t), &\text{if } \rho_+(t)>1/2,\ \rho_+(t)>1-\rho_-(t),\\
    1/2, &\text{if } \rho_-(t)\ge1/2,\ \rho_+(t)\le1/2.
  \end{cases}
\end{align}
This extends the variational property in \cite{PopkovS99} to a quasi-static sense:
\begin{align}
  J(\bar u(t)) =
  \begin{cases}
    \inf \{J(\rho); \rho\in[\rho_-(t),\rho_+(t)]\}, &\text{if}\ \rho_-(t)\le\rho_+(t),\\
    \sup \{J(\rho); \rho\in[\rho_+(t),\rho_-(t)]\}, &\text{if}\ \rho_-(t)>\rho_+(t).
  \end{cases}
\end{align}

The quasi-stationary profile can be directly derived from the microscopic dynamics under the quasi-static time scale.
Precisely speaking, for $a>0$ let $N^{1+a}L_{N,t}$ generate the process $\eta'(t)\in\Omega_N$.
Suppose $\zeta'_N$ to be the empirical density of $\eta'$ defined in \eqref{eq:empirical}.
Then $\zeta'_N$ converges to $\bar u$, as $N\to\infty$ in the sense of \eqref{eq:convergence}.
This quasi-static limit has been proved in \cite[Theorem 3.3]{DMOX22} when the boundary data are smooth functions of $t$ and $\rho_--\rho_+$ keeps its sign within $[0,T]$.
We expect the same result to hold for general non-smooth boundary data.

\section{Entropy solution on bounded domain}
\label{sec:burgers}

Fix two measurable functions $\rho_\pm: [0,\infty)\to[0,1]$ and recall the initial--boundary problem in \eqref{eq:cl1}--\eqref{eq:cl2}.
We say that $(f,q) \in \cC^2([0,1];\bR^2)$ is a Lax entropy flux pair associated to $J$ if
\begin{align}
  q'(u)=J'(u)f'(u)=(1-2u)f'(u), \quad \forall\,u\in[0,1].
\end{align}
The pair $(f,q)$ is called convex if $f''(u)\ge0$ for $u\in[0,1]$.
To characterise the boundary conditions, Otto \cite{Otto96} introduced the boundary entropy.

\begin{defn}
\label{def:bd-ent}
A \emph{boundary entropy flux pair} associated to \eqref{eq:cl1} is a couple of $\cC^2$ functions $(F,Q): [0,1]\times\bR \to \bR^2$ such that
\begin{align}
  F(w,w)=Q(w,w)=\partial_uF(w,w)=0, \quad \forall\,(u,w) \in [0,1]\times\bR,
\end{align}
and $(F,Q)(\cdot,w)$ is a Lax entropy flux pair for each $w\in\bR$.
Moreover, we say that the pair $(F,Q)$ is convex if $F(u,w)$ is convex in $u$ for all $w\in\bR$.
\end{defn}

The entropy solution to \eqref{eq:cl1}--\eqref{eq:cl2} is a function $\rho \in L^\infty(\bR_+\times[0,1])$ satisfying
\begin{itemize}
\item[\romannum1.] the entropy inequality in the weak sense
\begin{align}
\label{eq:ent-sol1}
  \int_0^\infty \int_0^1 f(\rho)\partial_t\vf\,dx\,dt + p\int_0^\infty \int_0^1 q(\rho)\partial_x\vf\,dx\,dt \ge 0,
\end{align}
for all convex entropy flux pair $(f,q)$ and positive function $\vf\in\cC^\infty(\bR_+\times(0,1))$ with compact support;
\item[\romannum2.] the initial condition in the sense
\begin{align}
\label{eq:ent-sol2}
  \esslim_{t\to0^+} \int_0^1 |\rho(t,x)-u_0(x)|dx = 0;
\end{align}
\item[\romannum3.] the boundary conditions introduced by Otto \cite{Otto96}
\begin{equation}
\label{eq:ent-sol3}
  \begin{split}
   & \esslim_{r\to 0^+} \int_0^\infty Q\big(\rho(t,r),\rho_-(t)\big) \phi(t)dt \le 0,\\
   & \esslim_{r\to 0^+} \int_0^\infty Q\big(\rho(t,1-r),\rho_+(t)\big) \phi(t)dt \ge 0,
  \end{split}
\end{equation}
for all convex boundary entropy flux $Q$ and positive function $\phi\in\cC(\bR_+)$ with compact support.
\end{itemize}
Observe that the equality in \eqref{eq:ent-sol1} is reached for $(f,q)=(u,J(u))$, hence entropy solution is in particular a weak solution.
It is not hard to see that $\rho(t,x) \in [0,1]$ almost surely.

\begin{rem}[Boundary traces]
In the case $\rho$ is of bounded variation, \eqref{eq:ent-sol3} coincides with the
Bardos--Leroux--N\'ed\'elec condition \cite{BLN79}.
Later, Vasseur \cite{Vass01} proved that $\rho$ has strong boundary traces even in $L^\infty$ regime, so Bardos--Leroux--N\'ed\'elec's condition is generally valid.
Nevertheless, we use an alternative expression that does not contain the explicit information of the boundary traces, see \eqref{eq:ent-sol} for detail.
\end{rem}

\section{Proof of Theorem \ref{thm:hyd}}
\label{sec:proof}

Fix some $T>0$ and denote $\Sigma_T=[0,T]\times[0,1]$.
By a Young measure on $\Sigma_T\times[0,1]$ we mean a positive measure $\nu$ on $\Sigma_T\times[0,1]$ such that
\begin{align}
\label{eq:young-condition}
  \nu(A\times[0,1])=\mu_\mathrm{Leb}(A), \quad \forall\,\text{Borel measurable set}\ A\subseteq\Sigma_T,
\end{align}
where $\mu_\mathrm{Leb}$ is the Lebesgue measure.
Denote by $\mathcal Y$ the set of all Young measures, endowed with the weak topology, i.e., a sequence $\nu^n \to \nu$ if and only if
\begin{align}
  \lim_{n\to\infty} \int f\,d\nu^n = \int f\,d\nu, \quad \forall\,f\in\cC(\Sigma_T\times[0,1]).
\end{align}
The condition \eqref{eq:young-condition} being stable under weak convergence, so $\mathcal Y$ is closed.
Since $\Sigma_T\times[0,1]$ is a Polish space, also is $\mathcal Y$.
Moreover, Prokhorov's theorem yields that $\mathcal Y$ is compact.

By the slicing measure theorem (cf. \cite[pp.14, Theorem 1.10]{Evans90}), $\nu\in\mathcal Y$ can be interpreted as a family $\{\nu_{t,x}; (t,x)\in\Sigma_T\}$ of probability measures on $[0,1]$, such that
\begin{align}
  \int f\,d\nu = \iint_{\Sigma_T} dx\,dt\int_0^1 f(t,x,u) \nu_{t,x}(du), \quad \forall\,f\in\cC(\Sigma_T\times[0,1]).
\end{align}
Let $\mathcal Y_D$ be the subset of $\mathcal Y$ consisting of all Dirac type measures: 
\begin{equation}
\label{eq:dirac-young}
  \mathcal Y_D := \left\{ \nu\in \mathcal Y: \nu_{t,x} = \delta_{\rho(t,x)} \text{ a.e., for some } \rho\in L^\infty(\Sigma_T),   0\le \rho(t,x) \le 1\right\}. 
\end{equation}
Recall the empirical density $\zeta_N$ in \eqref{eq:empirical} and define the Young measures 
\begin{align}
\label{eq:young}
  \nu^N := \big\{\nu^N_{t,x}=\delta_{\zeta_N(t,x)};\ (t,x)\in\Sigma_T\big\}. 
\end{align}
Since $\zeta_N \in [0,1]$ for all $N$, $\nu^N$ generates a probability measure on $\mathcal Y$ that concentrates on $\mathcal Y_D$. 
To prove \eqref{eq:convergence}, it suffices to show that $\nu^N$ converge, in probability, to the Dirac type measure $\{\delta_{\rho(t,x)};(t,x)\in\Sigma_T\}$ with the entropy solution $\rho$ to \eqref{eq:cl1}--\eqref{eq:cl2}. 

\begin{proof}[Proof of Theorem \ref{thm:hyd}]
Let $K=K(N)$ be a mesoscopic scale such that $K=o(N)$.
The choice of $K$ will be specified later in \eqref{eq:assp-k}.
Define the uniform average 
\begin{align}
\label{eq:block}
  \bar\eta_{i,K} := \frac1K\sum_{i'=0}^{K-1} \eta_{i-i'}, \quad \forall\,i=K,K+1,\ldots,N. 
\end{align}
For $i=K$, ..., $N-K+1$, define the smoothly weighted average
\begin{align}
\label{eq:smooth-block}
  \hat\eta_{i,K} := \frac1K\sum_{i'=0}^{K-1} \bar\eta_{i+i',K} = \sum_{i'=-K+1}^{K-1} w_{i'}\eta_{i-i'}, \quad w_{i'} = \frac{K-|i'|}{K^2}. 
\end{align}
The corresponding empirical density $\rho_N$ is given by 
\begin{align}
\label{eq:empirical1}
  \rho_N(t,x) := \sum_{i=K+1}^{N-K} \chi_{i,N}(x) \hat\eta_{i,K}(t), \quad (t,x) \in [0,+\infty)\times[0,1], 
\end{align}
where $\chi_{i,N}$ is the indicator function in \eqref{eq:indicator}. 
Observe that for any $\psi \in \cC^1(\Sigma_T)$, 
\begin{align}
  \lim_{N\to\infty} \left| \iint_{\Sigma_T} \psi(t,x)\big(\zeta_N(t,x)-\rho_N(t,x)\big)dx\,dt \right| \le \lim_{N\to\infty} \frac{C_\psi TK}N = 0. 
\end{align}
Therefore, $\rho_N$ can be identified with $\zeta_N$ in the macroscopic limit. 
In the following we prove Theorem \ref{thm:hyd} with $\eta_i$ replaced by $\hat\eta_{i,K}$.

Denote by $\hat\nu_N$ the Young measure concentrated on $\rho_N$. 
Let $\fQ_N$ be the probability distribution of $\hat\nu_N$ on $\mathcal Y$. 
Since $\mathcal Y$ is compact, the sequence $\fQ_N$ is tight, thus we can extract a subsequence $\fQ_{N'}$ that converges weakly to some measure $\fQ$ on $\mathcal Y$, which means that for all $f \in \cC(\Sigma_T\times[0,1])$, 
\begin{align*}
  E^\fQ \left[\iint_{\Sigma_T} dx\,dt\int_0^1 f(t,x,u)\nu_{t,x}(du)\right] 
  &= \lim_{N'\to\infty} E^{\fQ_{N'}} \left[\iint_{\Sigma_T} dx\,dt\int_0^1 f(t,x,u)\nu_{t,x}(du)\right] \\
  &= \lim_{N'\to\infty} \bE_{N'} \left[\iint_{\Sigma_T} f\big(t,x,\rho_{N'}(t,x)\big)dx\,dt\right]. 
\end{align*}

The proof is then divided into two main steps.
First, in Section \ref{sec:com-com} we show that there exists $\rho \in L^\infty(\Sigma_T)$, which may depend on the choice of subsequence $N'$, such that
\begin{align}
\label{eq:dirac-delta}
  \fQ\left\{\nu_{t,x}=\delta_{\rho(t,x)}, \ (t,x)-\text{a.s.}\right\}=1.
\end{align}
Second, in Section \ref{sec:bd-ent} we prove that $\rho=\rho(t,x)$ satisfies that
\begin{equation}
\label{eq:ent-sol}
  \begin{aligned}
  &-\iint_{\Sigma_T} F(\rho,h)\partial_t\psi\,dx\,dt - p\iint_{\Sigma_T} Q(\rho,h)\partial_x\psi\,dx\,dt\\
  \le\,&\int_0^1 F(u_0,h)\psi(0,\cdot)dx + p\int_0^T F(\rho_-,h)\psi(\cdot,0)dt + p\int_0^T F(\rho_+,h)\psi(\cdot,1)dt
  \end{aligned}
\end{equation}
for all convex boundary entropy flux pair $(F,Q)$, any $h\in\bR$ and smooth, positive function $\psi$ defined on $\Sigma_T$.
By \cite[Theorem 2.7.31]{MNRR96}, \eqref{eq:ent-sol} is a necessary and sufficient condition of \eqref{eq:ent-sol1}--\eqref{eq:ent-sol3}.
Hence, $\fQ$ is concentrated on the Dirac type Young measure associated to the entropy solution to \eqref{eq:cl1}--\eqref{eq:cl2}.
Finally, we conclude from the uniqueness of the entropy solution \cite[Theorem 2.7.28]{MNRR96} that $\fQ$ is independent of the choice of subsequence.
\end{proof}

\subsection{Mesoscopic scale}
Hereafter we fix $K$ to be such that
\begin{align}
\label{eq:assp-k}
  \lim_{N\to\infty} \frac{N\sigma_N}{K^3} = \lim_{N\to\infty} \frac{NK^2}{\sigma_N^3} = \lim_{N\to\infty} \frac{\sigma_N^2}{NK} = 0.
\end{align}
This choice of the mesoscopic scale is crucial for the proof, for instance in Lemma \ref{lem:esti-m}, \ref{lem:esti-s} and \ref{lem:esti-a-bd}.
In view of \eqref{eq:assp}, a simple example is to choose
\begin{align}
  \sigma_N = N^{\frac57+\kappa}, \quad K = \left[ N^{\frac47+\frac{17\kappa}{14}+\kappa^2} \right], \quad \kappa \in \left( 0,\frac27 \right),% \quad \kappa^* \in \left( \max \left\{ 2\kappa-\frac17,\frac\kappa3 \right\},\frac{3\kappa}2 \right)}.
\end{align}
In particular, $N^{4/7} \ll K \ll \sigma_N \ll N$.

\section{Microscopic entropy production}

Given an entropy pair $(f,q)$ and a Young measure $\nu\in\mathcal Y$, the entropy production of $\nu$ is a distribution $X^f(\nu) \in W^{-1,\infty}(\Sigma_T)$ defined by 
\begin{align}
\label{eq:ent-prod0}
  X^f(\nu) := \partial_t \langle f \rangle_{\nu_{t,x}} + p\partial_x\langle q \rangle_{\nu_{t,x}}, \quad (t,x)\in\Sigma_T. 
\end{align}
Applied on a test function $\psi\in\cC^1(\Sigma_T)$, this definition reads
\begin{align}
\label{eq:ent-prod1}
  X^f(\nu,\psi) = -\iint_{\Sigma_T} \langle f \rangle_{\nu_{t,x}}\partial_t\psi\,dx\,dt - p\iint_{\Sigma_T} \langle q \rangle_{\nu_{t,x}}\partial_x\psi\,dx\,dt.
\end{align}
To simplify the notations, for $i=1$, ..., $N$ we write 
\begin{align}
\label{eq:partition}
  \psi_i(t) := \psi \left( t,\frac iN - \frac1{2N} \right), \quad \bar\psi_i(t) := N\int_0^1 \psi(t,x) \chi_{i,N}(x)dx. 
\end{align}
For a sequence $\{a_i\}$, denote $\nabla a_i=a_{i+1}-a_i$, $\nabla^*a_i=a_{i-1}-a_i$ and
\begin{align}
\label{eq:laplacian}
  \Delta a_i=(-\nabla^*\nabla)a_i=a_{i+1}-2a_i+a_{i-1}.
\end{align}

For $i=K+1$, ... $N-K$, recall $\hat\eta_{i,K}$ defined in \eqref{eq:smooth-block}.
Observe that they are supported on $\{\eta_2, ..., \eta_{N-1}\}$, so $L_{\pm,t}$ do not contribute to their time evolution and
\begin{align}
\label{eq:current}
  L_{N,t} [\hat\eta_{i,K}] = p\nabla^*\hat J_{i,K}+\sigma\Delta\hat\eta_{i,K}, \quad \hat J_{i,K} = \sum_{i'=-K+1}^{K-1} w_{i'}J_{i-i',i-i'+1},
\end{align}
where $J_{i,i+1}=\eta_i(1-\eta_{i+1})$ gives the microscopic current.
In most of the following contents, we omit the subscript $K$ in $\hat\eta_{i,K}$, $\hat J_{i,K}$ and write $\hat\eta_i$, $\hat J_i$ for short.

\begin{lem}[Basic decomposition]
\label{lem:ent-prod-decom}
Recall the empirical density $\rho_N$ in \eqref{eq:empirical1} and the Dirac type Young measure $\hat\nu_N$ concentrated on it.
The entropy production decomposes as
\begin{equation}
\label{eq:ent-prod-decom}
  \begin{aligned}
    X^f(\hat\nu_N,\psi) =\;&\frac1N\sum_{i=K+1}^{N-K} f\big(\hat\eta_i(0)\big)\bar\psi_i(0)\\
    &+ \mathcal M_N(\psi) + \cA_N(\psi) + \cS_N(\psi) + \sum_{\ell=1,2,3} \mathcal E_{N,\ell}(\psi),
  \end{aligned}
\end{equation}
for all $\psi\in\cC^1(\Sigma_T)$ such that $\psi(T,\cdot)=0$.
In \eqref{eq:ent-prod-decom},
\begin{align}
\label{eq:martingale}
  \mathcal M_N(\psi) := -\int_0^T \frac1N\sum_{i=K+1}^{N-K} \bar\psi'_i(t)M_{i,K}^f(t)dt
\end{align}
where $M_{i,K}^f=M_{i,K}^f(t)$ is a family of martingales,
\begin{align}
\label{eq:def-a}
  \cA_N(\psi) &:= p\int_0^T \sum_{i=K+1}^{N-K} \bar\psi_i(t)f'\big(\hat\eta_i(t)\big)\nabla^* \left[ \hat J_i(t) - J\big(\hat\eta_i(t)\big) \right] dt,\\
\label{eq:def-s}
  \cS_N(\psi) &:= \sigma_N\int_0^T \sum_{i=K+1}^{N-K} \bar\psi_i(t)f'\big(\hat\eta_i(t)\big)\Delta\hat\eta_i(t)dt,
\end{align}
where $J(u)=u(1-u)$ and for $\ell=1$, $2$, $3$,
\begin{align}
\label{eq:def-e1}
  \mathcal E_{N,1}(\psi) &:= p\int_0^T \sum_{i=K+1}^{N-K} \big[\bar\psi_i\nabla^*q(\hat\eta_i) - q(\hat\eta_i)\nabla\psi_i\big]dt,\\
\label{eq:def-e2} 
  \mathcal E_{N,2}(\psi) &:= \int_0^T \sum_{i=K+1}^{N-K} \bar\psi_i(t) \left( \ep_{i,K}^{(1)} + p\ep_{i,K}^{(2)} \right) dt,\\
\label{eq:def-e3}
  \mathcal E_{N,3}(\psi) &:= -pq(0)\int_0^T \left( \int_0^{\frac{2K+1}{2N}} + \int_{1-\frac{2K-1}{2N}}^1 \right) \partial_x\psi\,dx\,dt,
\end{align}
where the small correction terms are given by
\begin{align}
\label{eq:corrections}
  \ep_{i,K}^{(1)} := L_{N,t}\big[f(\hat\eta_i)\big] - f'(\hat\eta_i)L_{N,t}[\hat\eta_i], \quad
  \ep_{i,K}^{(2)} := f'(\hat\eta_i)\nabla^*J(\hat\eta_i) - \nabla^*q(\hat\eta_i). 
\end{align}
\end{lem}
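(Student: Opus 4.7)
The plan is to start from definition (4.2) of $X^f(\hat\nu_N,\psi)$, rewrite it cell-by-cell using the explicit form of $\rho_N$, apply an integration by parts in time together with Dynkin's formula for the process generated by $NL_{N,t}$, and identify the resulting discrete pieces with those in (4.6) by direct algebraic manipulation using the definitions of $\ep^{(1)}_{i,K}$ and $\ep^{(2)}_{i,K}$.

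I would first use that $\rho_N(t,x)=\sum_{i=K+1}^{N-K}\chi_{i,N}(x)\hat\eta_i(t)$ is piecewise constant in $x$, so the $x$-integrals in $X^f(\hat\nu_N,\psi)$ collapse to $\tfrac1N\sum_i f(\hat\eta_i)\bar\psi_i'(t)$ in the $\partial_t$ piece and $\sum_i q(\hat\eta_i)\nabla\psi_i$ in the $\partial_x$ piece, modulo contributions from the two uncovered strips $[0,\tfrac{2K+1}{2N})$ and $(1-\tfrac{2K-1}{2N},1]$ on which $\rho_N=0$. The $\partial_x$ strip contribution is exactly $\mathcal{E}_{N,3}(\psi)$. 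An integration by parts in $t$ against $\bar\psi_i$ then kills the $t=T$ boundary by $\psi(T,\cdot)=0$ and produces the initial term $\tfrac1N\sum_i f(\hat\eta_i(0))\bar\psi_i(0)$; the remaining interior piece $\int_0^T\bar\psi_i\,df(\hat\eta_i)$ is decomposed by Dynkin's formula into $\int_0^T\bar\psi_i NL_{N,t}[f(\hat\eta_i)]\,dt$ plus a martingale $\int_0^T\bar\psi_i\,dM_i^f$, a further integration by parts in $t$ on the latter giving $\mathcal{M}_N(\psi)$.

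For the generator action I replace $L_{N,t}[f(\hat\eta_i)]$ by $f'(\hat\eta_i)L_{N,t}[\hat\eta_i]+\ep^{(1)}_{i,K}$ by the very definition of $\ep^{(1)}_{i,K}$. For $K+1\le i\le N-K$ the smeared average $\hat\eta_i$ depends only on $\eta_2,\dots,\eta_{N-1}$, so the boundary generators $L_{\pm,t}$ annihilate it and (4.6) gives $L_{N,t}[\hat\eta_i]=p\nabla^*\hat J_i+\sigma_N\Delta\hat\eta_i$; the symmetric contribution is exactly $\cS_N(\psi)$. I then split the current as $\hat J_i=(\hat J_i-J(\hat\eta_i))+J(\hat\eta_i)$: the first summand, tested against $p\bar\psi_i f'(\hat\eta_i)\nabla^*$, is $\cA_N(\psi)$; for the second the definition of $\ep^{(2)}_{i,K}$ converts $f'(\hat\eta_i)\nabla^*J(\hat\eta_i)$ into $\nabla^*q(\hat\eta_i)+\ep^{(2)}_{i,K}$. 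The correction $\bar\psi_i(\ep^{(1)}_{i,K}+p\,\ep^{(2)}_{i,K})$ integrated in $t$ and summed in $i$ is $\mathcal{E}_{N,2}(\psi)$, while the surviving piece $p\int_0^T\sum_i\bar\psi_i\nabla^*q(\hat\eta_i)\,dt$ combines with the earlier $-p\int_0^T\sum_i q(\hat\eta_i)\nabla\psi_i\,dt$ to form $\mathcal{E}_{N,1}(\psi)$ by direct inspection of (4.10).

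The argument is entirely algebraic, so the main obstacle is the bookkeeping at the lattice edges $i=K+1,N-K$ and at the spatial strips: the discrete operators $\nabla$ and $\nabla^*$ produce surface contributions that must either cancel or land in $\mathcal{E}_{N,1}$ or $\mathcal{E}_{N,3}$. The choice of $\bar\psi_i$ as the cell average rather than the nodal value $\psi_i$ is what keeps these identities clean; checking that every surface term sits in the right error bucket, and that the $O(K/N)$ strip contribution to the initial condition is harmless, is the delicate point rather than a conceptual obstacle.
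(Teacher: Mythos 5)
Your proposal is correct and follows essentially the same route as the paper: collapse the $x$-integrals to Riemann sums (with the uncovered strips giving $\mathcal E_{N,3}$), apply Dynkin's formula and time integration by parts using $\psi(T,\cdot)=0$ to extract the initial term and $\mathcal M_N$, then decompose $NL_{N,t}[f(\hat\eta_i)]$ via $\ep^{(1)}_{i,K}$, the current identity $L_{N,t}[\hat\eta_i]=p\nabla^*\hat J_i+\sigma_N\Delta\hat\eta_i$, the splitting $\hat J_i=(\hat J_i-J(\hat\eta_i))+J(\hat\eta_i)$ and $\ep^{(2)}_{i,K}$, exactly as in the paper's proof. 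Your remark about the strip contribution to the time-derivative term is a fair observation (the paper's \eqref{eq:ent-prod-decom0} silently drops an $O(K/N)$ piece proportional to $f(0)$), but this does not affect the argument.
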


\begin{proof}
By the definition in \eqref{eq:empirical1},
\begin{align}
\label{eq:ent-prod-decom0}
  -\iint_{\Sigma_T} f(\rho_N)\partial_t\psi\,dx\,dt = -\int_0^T \frac1N\sum_{i=K+1}^{N-K} f(\hat\eta_i)\bar\psi'_i\,dt.
\end{align}
By Dynkin formula, $M_{i,K}^f=M_{i,K}^f(t)$ is a martingale, where
\begin{align}
  M_{i,K}^f(t) := f\big(\hat\eta_i(t)\big) - f\big(\hat\eta_i(0)\big) - N\int_0^t L_{N,s} \big[f(\hat\eta_i(s))\big]ds.
\end{align}
Its quadratic variance is given by
\begin{align}
\label{eq:martingale1}
  \big\langle M_{i,K}^f \big\rangle(t) = N\int_0^t \Big\{L_{N,s} \big[f(\hat\eta_i)^2\big]-2f(\hat\eta_i)L_{N,s} \big[f(\hat\eta_i)\big]\Big\}ds.
\end{align}
Recalling \eqref{eq:current} and \eqref{eq:corrections}, we can decompose as
\begin{equation}
  \begin{aligned}
    &L_{N,t} [f(\hat\eta_i)] = \ep_{i,K}^{(1)} + f'(\hat\eta_i)\big(p\nabla^*\hat J_i+\sigma_N\Delta\hat\eta_i\big)\\
    =\;&\ep_{i,K}^{(1)} + p \left\{ f'(\hat\eta_i)\nabla^* \left[ \hat J_i-J(\hat\eta_i) \right] + \ep_{i,K}^{(2)} + \nabla^*q(\hat\eta_i) \right\} + \sigma_Nf'(\hat\eta_i)\Delta\hat\eta_i.
  \end{aligned}
\end{equation}
Therefore, \eqref{eq:ent-prod-decom0} and integrate-by-parts formula yield that
\begin{equation}
\label{eq:ent-prod-decom1}
  \begin{aligned}
    -\iint_{\Sigma_T} f(\rho_N)\partial_t\psi\,dx\,dt =\;&\frac1N\sum_{i=K+1}^{N-K} f\big(\hat\eta_{i,K}(0)\big)\bar\psi_i(0) + \mathcal M_N(\psi) + \cA_N(\psi)\\
    &+ \cS_N(\psi) + p\int_0^T \sum_{i=K+1}^{N-K} \bar\psi_i\nabla^*q(\hat\eta_i)dt + \mathcal E_{N,2}(\psi).
  \end{aligned}
\end{equation}
On the other hand, elementary manipulation shows that 
\begin{align}
\label{eq:ent-prod-decom2}
  - p\iint_{\Sigma_T} q(\rho_N)\partial_x\psi\,dx\,dt = -p\int_0^T \sum_{i=K+1}^{N-K} q(\hat\eta_i)\nabla\psi_i\,dt + \mathcal E_{N,3}(\psi), 
\end{align}
The decomposition \eqref{eq:ent-prod-decom} then follows from \eqref{eq:ent-prod1}, \eqref{eq:ent-prod-decom1} and \eqref{eq:ent-prod-decom2}.
\end{proof}

\section{Compensated compactness}
\label{sec:com-com}

The aim of this section is to verify \eqref{eq:dirac-delta}, following the stochastic compensated compactness developed in \cite{Fritz04}.
We first prepare some useful notations.
Denote by $\cC_0^\infty(\Sigma_T)$ the class of smooth functions on $\Sigma_T$ with compact support included in the interior of $\Sigma_T$.
Let $H_0^1(\Sigma_T)$ be the completion of $\cC_0^\infty(\Sigma_T)$ under the norm
\begin{align}
  \|\vf\|_{H_0^1(\Sigma_T)}^2 := \iint_{\Sigma_T} \big[\vf^2+(\partial_t\vf)^2+(\partial_x\vf)^2\big]dx\,dt.
\end{align}
The dual space of $H_0^1(\Sigma_T)$ is denoted by $H^{-1}(\Sigma_T)$, equipped with the norm
\begin{align}
  \|X\|_{H^{-1}(\Sigma_T)} := \sup \left\{ |X(\vf)|,\ \vf\in\cC_0^\infty(\Sigma_T),\ \|\vf\|_{H_0^1(\Sigma_T)} \le 1 \right\}.
\end{align}
Let $\mathcal M(\Sigma_T)$ be the set of signed Radon measures on $\Sigma_T$.
Define
\begin{align}
  \|\mu\|_{\mathcal M(\Sigma_T)} := \sup \big\{|\mu(\vf)|,\ \vf\in\cC_0^\infty(\Sigma_T),\ \|\vf\|_{L^\infty(\Sigma_T)} \le 1\big\}.
\end{align}

Recall the empirical density $\rho_N$ in \eqref{eq:empirical1} and the Young measure $\hat\nu_N$ associated to it.
Since $\hat\nu_N$ is of Dirac type, $\fQ_N(\mathcal Y_D) =1$, where $\fQ_N$ is the distribution of $\hat\nu_N$ and $\mathcal Y_D$ is defined in \eqref{eq:dirac-young}.
Recall that $\fQ$ is a weak limit point of $\fQ_N$.
To show $\fQ(\mathcal Y_D)=1$, the main obstacle is that $\mathcal Y_D$ is not closed under the weak topology.
In the following, we construct a set $\mathcal G \subseteq \mathcal Y$ such that the closure of $\mathcal Y_D\cap\mathcal G$ is contained in $\mathcal Y_D$ and $\lim_{N\to\infty} \fQ_N(\mathcal Y_D\cap\mathcal G) = 1$.

For two given Lax entropy flux pairs $(f_1,q_1)$, $(f_2,q_2)$ and two sequences $\{a_N\}$, $\{b_N\}$ such that $\lim_{N\to\infty} a_N=0$, $\sup b_N<\infty$, define
\begin{equation}
  \mathcal G_N := \left\{ \nu\in\mathcal Y_D \left|
  \begin{aligned}
    &\,\text{for}\ j=1,2,\ X^{f_j}(\nu) = Y_j + Z_j, \text{ s.t.}\\
    &\|Y_j\|_{H^{-1}(\Sigma_T)} \le a_N,\ \|Z_j\|_{\mathcal M(\Sigma_T)} \le b_N
  \end{aligned}
  \right.\right\},
\end{equation}
where $X^{f_j}(\nu)$ is the entropy production defined in \eqref{eq:ent-prod0}.
Let $\mathcal G$ be the closure of $\cap_{N\ge1}\mathcal G_N$ under the weak topology.
By the div-curl lemma  (\cite[Theorem 5.B.4, p.54]{Evans90}), $\mathcal G$ is a subset of $\mathcal Y_D$.
In Proposition \ref{prop:com-com} below we prove that $\lim_{N\to\infty} \fQ_N(\mathcal G_N) = 1$ and consequently that $\fQ(\mathcal Y_D) \ge \fQ(\mathcal G) = 1$ for any weak limit point $\fQ$.

\begin{prop}
\label{prop:com-com}
The entropy production $X^f(\hat\nu_N) = Y_N+Z_N$, such that
\begin{equation}
\label{eq:com-com}
  \begin{aligned}
    &\bE_N \big[\|Y_N\|_{H^{-1}(\Sigma_T)}\big] \le a_N, \quad \lim_{N\to\infty} a_N = 0, \\
    &\bE_N \big[\|Z_N\|_{\mathcal M(\Sigma_T)}\big] \le b_N, \quad \sup b_N < \infty. 
  \end{aligned}
\end{equation}
\end{prop}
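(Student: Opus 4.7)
The plan is to use the decomposition \eqref{eq:ent-prod-decom} from Lemma \ref{lem:ent-prod-decom} and assign each summand either to $Y_N$ (to be made $H^{-1}$-small in expectation) or to $Z_N$ (to stay $\mathcal{M}$-bounded). Test functions for these norms lie in $\cC_0^\infty(\Sigma_T)$ and vanish at $\partial\Sigma_T$, so the initial-data term $\frac1N\sum_i f(\hat\eta_i(0))\bar\psi_i(0)$ drops out, while $\mathcal{E}_{N,3}$ reduces by direct integration to the values of $\psi$ at a mesoscopic distance from $\partial\Sigma_T$ and is uniformly $\mathcal{M}$-bounded, belonging to $Z_N$.

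The martingale $\mathcal{M}_N$ is placed in $Y_N$. Expanding the quadratic variation \eqref{eq:martingale1} by letting the generator act through $f$, each local jump changes $\hat\eta_i$ by at most $O(1/K)$ via the smoothing weights $w_{i'}$, so each site contributes $(p+\sigma_N)/K$ per unit time from the bulk and $\wts_N/K^2$ from the boundary generators at the $O(1)$ sites near $i=K,\,N-K$. \Ito isometry then gives $\bE_N[\mathcal{M}_N(\psi)^2]\le C\|\partial_t\psi\|_{L^2}^2\cdot\big((p+\sigma_N)/(NK)+\wts_N/(N^2K^2)\big)$, which vanishes under \eqref{eq:assp-k} provided $\wts_N$ is not too large.

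The core analysis concerns $\mathcal{A}_N$ and $\mathcal{S}_N$. For the viscous term I use the Taylor identity $f'(u)\Delta u=\Delta f(u)-\tfrac12 f''(\zeta_\pm)((\nabla u)^2+(\nabla^* u)^2)$: two summations by parts on $\sigma_N\sum_i\bar\psi_i\Delta f(\hat\eta_i)$ produce $\sigma_N\sum_i f(\hat\eta_i)\Delta\bar\psi_i$, of $H^{-1}$-norm $O(\sigma_N/N)\to 0$, while the quadratic remainder is absorbed into $Z_N$ via the Dirichlet-form energy estimate $\sigma_N\,\bE_N[\int_0^T\sum_i(\nabla\hat\eta_i)^2dt]\le C$. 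For $\mathcal{A}_N$, one summation by parts produces $\mathcal{M}$-bounded boundary terms together with an interior sum whose discrete gradient splits into a gradient-on-$\psi$ piece, made $H^{-1}$-small by Cauchy--Schwarz against the one-block replacement $\bE_N[\sum_i(\hat J_i-J(\hat\eta_i))^2]=O(N/K)$, and a gradient-on-$\hat\eta$ piece, made $\mathcal{M}$-small by combining the same replacement with the energy estimate $\bE_N[\sum_i(\nabla\hat\eta_i)^2]=O(1/\sigma_N)$. The residual $\mathcal{E}_{N,1}$ (an IBP discrepancy of total size $O(1/N)$) and $\mathcal{E}_{N,2}$ (Taylor remainders of orders $O(N\sigma_N/K^3)$ and $O(1/\sigma_N)$ in expectation) are similarly controlled by \eqref{eq:assp-k}.

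The principal difficulty is that the quadratic remainder of $\mathcal{S}_N$ and the gradient-on-$\hat\eta$ piece of $\mathcal{A}_N$ both rely on the single gradient energy $\sum_i(\nabla\hat\eta_i)^2$, which only the symmetric Dirichlet form dissipates. The three simultaneous conditions in \eqref{eq:assp-k} are precisely what ensures that the Dirichlet-form energy, the one-block replacement, and the Taylor error in $\mathcal{E}_{N,2}$ are jointly small enough to close the decomposition.
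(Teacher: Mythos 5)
Your overall strategy coincides with the paper's: apply the decomposition of Lemma \ref{lem:ent-prod-decom} to $\vf\in\cC_0^\infty(\Sigma_T)$ and sort the terms into an $H^{-1}$-small part $Y_N$ and an $\mathcal M$-bounded part $Z_N$, using the one-block and gradient-energy estimates. However, your treatment of the viscous term $\cS_N$ has a genuine gap. After extracting the convex quadratic remainder (correctly sent to $Z_N$ via the energy estimate, this is the paper's $\cS_{N,2}$), you perform \emph{two} summations by parts and claim that $\sigma_N\sum_i f(\hat\eta_i)\Delta\bar\vf_i$ has $H^{-1}$-norm $O(\sigma_N/N)$. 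This is false: $\Delta\bar\vf_i$ is of order $N^{-2}\|\partial_x^2\vf\|_\infty$, and second derivatives of $\vf$ are not controlled by $\|\vf\|_{H_0^1(\Sigma_T)}$ (test with $\vf$ oscillating at frequency $N$); nor is the term $\mathcal M$-bounded, since $\sigma_N\sum_i|\Delta\bar\vf_i|$ can reach order $\sigma_N N\|\vf\|_{L^\infty}$. The correct route (Lemma \ref{lem:esti-s}) stops after \emph{one} summation by parts and applies Cauchy--Schwarz to $\sigma_N\sum_i\nabla f(\hat\eta_i)\nabla\bar\vf_i$, pairing $\sum_i(\nabla\bar\vf_i)^2\le CN^{-1}\|\vf\|^2_{H_0^1}$ with Proposition \ref{prop:h1}; this gives $\bE_N[\|\cS_{N,1}\|^2_{H^{-1}}]\le C(\sigma_N/N+\sigma_N^2/K^3)$, and the energy estimate is indispensable here too, as the crude bound $|\nabla\hat\eta_i|\le K^{-1}$ only yields $\sigma_N^2/K^2\to\infty$. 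Relatedly, summation by parts over $i=K+1,\dots,N-K$ produces boundary terms at $i=K,\,N-K$ that cannot be dropped merely because $\vf$ is compactly supported (the supremum over $\vf$ is taken before $N\to\infty$); for $\cS_N$ these carry a factor $\sigma_N\nabla\hat\eta_K$ and must be treated in $H^{-1}$ via $|\vf(x)|\le\sqrt{x}\,\|\vf\|_{H_0^1}$, not in $\mathcal M$.

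A second, smaller defect: your martingale bound includes a boundary-generator contribution $\wts_N/(N^2K^2)$ together with the caveat that $\wts_N$ be ``not too large''. No such restriction is available---the hypotheses only impose $\wts_N\to\infty$, possibly very fast---and none is needed: for $K+1\le i\le N-K$ the averages $\hat\eta_{i,K}$ depend only on $\eta_2,\dots,\eta_{N-1}$, so $L_{\pm,t}$ do not act on $f(\hat\eta_i)$ and contribute nothing to the quadratic variation. Moreover, by \eqref{eq:exchange-smooth} a single exchange changes $\hat\eta_i$ by $K^{-2}$, not $K^{-1}$; this is what produces the paper's bound $\bE_N[\|\mathcal M_N\|^2_{H^{-1}}]\le CN\sigma_N K^{-3}$, which vanishes by \eqref{eq:assp-k}. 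The remaining assignments ($\mathcal E_{N,3}$ to $Z_N$, the one-block/energy splitting of $\cA_N$, and $\mathcal E_{N,1},\mathcal E_{N,2}$) are sound up to constants, though you quote the one-block rate as $O(N/K)$ where the dominant contribution is actually $K^2/\sigma_N$; this does not affect the conclusion.
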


In order to prove Proposition \ref{prop:com-com}, we apply \eqref{eq:ent-prod-decom} with $\vf\in\cC_0^\infty(\Sigma_T)$ and estimate each term in the right-hand side by several lemmas.
Recall the definition of $\vf_i$ and $\bar\vf_i$ in \eqref{eq:partition}.

\begin{lem}
\label{lem:esti-e}
As $N\to\infty$, $\mathcal E_{N,1}$ and $\mathcal E_{N,3}$ vanish uniformly in $H^{-1}(\Sigma_T)$, while $\mathcal E_{N,2}$ vanishes uniformly in $\mathcal M(\Sigma_T)$.
\end{lem}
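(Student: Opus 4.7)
The three error terms have rather different structures --- $\mathcal E_{N,1}$ is a discrete integration-by-parts defect, $\mathcal E_{N,2}$ collects Taylor remainders, and $\mathcal E_{N,3}$ is a pure boundary-layer term --- so I would treat them one by one. For $\mathcal E_{N,1}$ and $\mathcal E_{N,3}$ the test function $\vf\in\cC_0^\infty(\Sigma_T)$ has $\|\vf\|_{H_0^1(\Sigma_T)}\le 1$ and, crucially, satisfies $\vf(t,0)=\vf(t,1)=0$; for $\mathcal E_{N,2}$ it suffices that $\|\psi\|_\infty\le 1$. All bounds I will derive are deterministic in $\hat\eta$, hence descend to expectations.

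For $\mathcal E_{N,3}$, once $\vf(t,0)=\vf(t,1)=0$ is exploited, the two inner integrals collapse to $-pq(0)\int_0^T[\vf(t,(2K+1)/(2N))-\vf(t,1-(2K-1)/(2N))]\,dt$. The trace-style bound $|\vf(t,y)|\le\sqrt{y}\,\|\partial_x\vf(t,\cdot)\|_{L^2([0,1])}$, combined with Cauchy--Schwarz in $t$, gives $\|\mathcal E_{N,3}\|_{H^{-1}(\Sigma_T)}\le C\sqrt{K/N}\to 0$. For $\mathcal E_{N,2}$ I would estimate $\ep^{(1)}_{i,K}$ and $\ep^{(2)}_{i,K}$ pointwise. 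A bulk exchange $(j,j+1)$ alters $\hat\eta_i$ by $(w_{i-j}-w_{i-j-1})(\eta_{j+1}-\eta_j)$, of size at most $1/K^2$; only $O(K)$ exchanges are relevant and each has rate $\le p+\sigma_N$, so a second-order Taylor expansion of $f$ at $\hat\eta_i$ yields $|\ep^{(1)}_{i,K}|\le C\sigma_N/K^3$. Taylor expanding $J$ and $q$ between $\hat\eta_{i-1}$ and $\hat\eta_i$ and using $q'=J'f'$ to cancel the first-order contributions yields $|\ep^{(2)}_{i,K}|\le C(\hat\eta_{i-1}-\hat\eta_i)^2\le C/K^2$, where $|\hat\eta_{i-1}-\hat\eta_i|\le 2/K$ follows from the Lipschitz weights $w_{i'}$. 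Summing over the $\sim N$ indices and integrating in $t$ gives $\|\mathcal E_{N,2}\|_{\mathcal M(\Sigma_T)}\le C(N\sigma_N/K^3+N/K^2)\to 0$ under \eqref{eq:assp-k}.

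The main work is $\mathcal E_{N,1}$: the two sums $\sum\bar\psi_i\nabla^*q(\hat\eta_i)$ and $\sum q(\hat\eta_i)\nabla\psi_i$ are inequivalent discrete representations of the same integration by parts, so their difference must be small. I plan a double summation-by-parts. The first SBP moves $\nabla^*$ from $q(\hat\eta_i)$ onto $\bar\psi_i$, producing $\sum q(\hat\eta_j)(\bar\psi_{j+1}-\bar\psi_j)$ up to explicit boundary data at $j=K,N-K$; subtracting $\sum q(\hat\eta_i)\nabla\psi_i$ leaves $\sum q(\hat\eta_j)(r_{j+1}-r_j)$, where $r_i:=\bar\psi_i-\psi_i$ is the cell-averaging defect. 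A second SBP converts this into $\sum\nabla^*q(\hat\eta_j)\,r_j$, again modulo boundary data. The two decisive inputs are $|\nabla^*q(\hat\eta_j)|\le C/K$ (from the smoothness estimate $|\hat\eta_j-\hat\eta_{j-1}|\le 2/K$ enforced by the weights in \eqref{eq:smooth-block}) and $|r_j|\le\int_{(2j-1)/(2N)}^{(2j+1)/(2N)}|\partial_x\vf|\,dx$. Cauchy--Schwarz then bounds the bulk by $CK^{-1}\|\vf\|_{H_0^1(\Sigma_T)}$, while every boundary term emerging from the two SBPs --- of the form $\bar\psi_{K+1}q(\hat\eta_K)$, $r_{K+1}q(\hat\eta_{K+1})$, or $(\psi_{N-K+1}-\psi_{N-K})q(\hat\eta_{N-K})$ --- is absorbed by the same trace estimate used for $\mathcal E_{N,3}$, contributing $O(\sqrt{K/N})$. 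Altogether $\|\mathcal E_{N,1}\|_{H^{-1}(\Sigma_T)}\to 0$.

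The delicate step is precisely this $H^{-1}$ bound for $\mathcal E_{N,1}$: a straight pointwise Taylor estimate would cost a second derivative of $\vf$, which is not controlled by $\|\vf\|_{H_0^1}$. The argument only closes because the second-order smallness can be traded, via the double SBP, for a factor $1/K$ coming from the smoothness of $q(\hat\eta_i)$ in $i$ --- a smoothness which is itself the reason why $\hat\eta_{i,K}$ is defined as a double (triangular) block average in \eqref{eq:smooth-block} rather than a plain block average.
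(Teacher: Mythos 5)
Your proposal is correct and follows essentially the same route as the paper: a summation by parts reducing $\mathcal E_{N,1}$ to the cell-averaging defect $\bar\vf_i-\vf_i$ paired with $\nabla^*q(\hat\eta_i)=O(K^{-1})$ plus boundary terms controlled by the trace bound $|\vf(t,x)|\le\sqrt{x}\,\|\vf\|_{H_0^1}$, pointwise Taylor estimates $|\ep^{(1)}_{i,K}|\le C\sigma_N K^{-3}$ and $|\ep^{(2)}_{i,K}|\le CK^{-2}$ for $\mathcal E_{N,2}$ in $\mathcal M(\Sigma_T)$, and Cauchy--Schwarz over the boundary strips for $\mathcal E_{N,3}$. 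Your ``double'' summation by parts nets exactly the paper's single-step identity $\sum_i(\bar\vf_i-\vf_i)\nabla^*q(\hat\eta_i)$, so the difference is purely one of bookkeeping.
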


\begin{proof}
We first treat $\mathcal E_{N,1}$.
Applying summation by parts\footnote{Though $\vf$ is compactly supported, the boundary terms can not be omitted autonomously, since we need to take supreme over all $\vf$ before send $N$ to $\infty$.},
\begin{equation}
  \begin{aligned}
    \mathcal E_{N,1}(\vf) = p\int_0^T \vf_{K+1}q(\hat\eta_K)dt - p\int_0^T \vf_{N-K+1}q(\hat\eta_{N-K})dt\\
    + p\int_0^T \sum_{i=K+1}^{N-K} (\bar\vf_i-\vf_i)\nabla^*q(\hat\eta_i)dt.
  \end{aligned}
\end{equation}
Since $\vf$ vanishes at the boundary of $\Sigma_T$, \CS inequality yields that
\begin{equation}
\label{eq:esti-bd}
  \begin{aligned}
    \int_0^T \big[\vf(t,x)^2+\vf(t,1-x)^2\big]dt &\le \int_0^T x \left[ \int_0^x (\partial_y\vf)^2dy+\int_{1-x}^1 (\partial_y\vf)^2dy \right] dt\\
    &\le 2x\|\vf\|_{H_0^1(\Sigma_T)}^2, \quad \forall x\in[0,1].
  \end{aligned}
\end{equation}
Consequently, $\int_0^T \vf_{K+1}^2dt + \int_0^T \vf_{N-K+1}^2dt \le CKN^{-1}\|\vf\|_{H_0^1}^2$ and
\begin{equation}
\label{eq:e1}
  \begin{aligned}
    &\left| \int_0^T \vf_{K+1}q(\hat\eta_K)dt - \int_0^T \vf_{N-K+1}q(\hat\eta_{N-K})dt \right|\\
    \le\;&C\sqrt{\frac KN}\|\vf\|_{H_0^1(\Sigma_T)} \left\{ \bigg[\int_0^T q(\hat\eta_K)^2dt\bigg]^{\frac12} + \bigg[\int_0^T q(\hat\eta_{N-K})^2dt\bigg]^{\frac12} \right\}\\
    \le\;&C'T|q|_\infty\sqrt{\frac KN}\|\vf\|_{H_0^1(\Sigma_T)}.
  \end{aligned}
\end{equation}
Also noting that $|\nabla^*q(\hat\eta_i)| \le |q'|_\infty|\hat\eta_{i-1}-\hat\eta_i| \le |q'|_\infty K^{-1}$,
\begin{equation}
  \begin{aligned}
    \bigg|\int_0^T \sum_{i=K+1}^{N-K} (\bar\vf_i-\vf_i)\nabla^*q(\hat\eta_i)dt\bigg|^2 &\le |q'|_\infty^2\frac{NT}{K^2}\int_0^T \sum_{i=K+1}^{N-K} (\bar\vf_i-\vf_i)^2dt\\
  &\le C|q'|_\infty^2 K^{-2} \|\vf\|_{H_0^1(\Sigma_T)}^2.
  \end{aligned}
\end{equation}
For $\mathcal E_{N,3}$, by \CS inequality,
\begin{align}
  \big|\mathcal E_{N,3}(\vf)\big|^2 \le \frac{2p^2|q(0)|^2TK}N \iint_{\Sigma_T} (\partial_x\vf)^2dx\,dt \le \frac{C|q|_\infty^2TK}N\|\vf\|_{H_0^1(\Sigma_T)}^2.
\end{align}
The conclusion then follows from \eqref{eq:assp} and \eqref{eq:assp-k}.

Now we estimate $\mathcal E_{N,2}$ in $\mathcal M(\Sigma_T)$.
By the definition \eqref{eq:gen} of $L_{N,t}$, 
\begin{align}
  \ep_{i,K}^{(1)} = \sum_{j =i-K}^{i+K-1} (p\eta_j+\sigma_N) \left[ f(\hat\eta_i^{j,j+1}) - f(\hat\eta_i) - f'(\hat\eta_i) \left( \hat\eta_i^{j,j+1} - \hat\eta_i \right) \right].
\end{align}
Recall $\hat\eta_i=\hat\eta_{i,K}$ in \eqref{eq:smooth-block}.
Direct computation shows that
\begin{equation}
\label{eq:exchange-smooth}
  \hat\eta_i^{j,j+1} = \hat\eta_i - \sgn\left( i-j-\frac12 \right)\frac{\nabla\eta_j}{K^2}, \quad j-K+1 \le i \le j+K 
\end{equation}
and otherwise $\hat\eta_i^{j,j+1}-\hat\eta_i=0$.
Therefore,
\begin{align}
\label{eq:esti-ep1}
  \left| \ep_{i,K}^{(1)} \right| \le \sum_{j=i-K}^{i+K-1} \frac{(p+\sigma_N)|f''|_\infty}2 \left( \hat\eta_i^{j,j+1} - \hat\eta_i \right)^2 \le \frac{(p+\sigma_N)|f''|_\infty}{K^3}.
\end{align}
Also notice that $f'(u)J'(u)=q'(u)$ and $|\nabla^*\hat\eta_{i,K}| \le 1/K$, so that 
\begin{align}
\label{eq:esti-ep2}
  \left| \ep_{i,K}^{(2)} \right| \le \frac{|f'|_\infty|J''|_\infty+|q''|_\infty}{K^2} \le \frac{4|f'|_\infty+|f''|_\infty}{K^2}. 
\end{align}
From \eqref{eq:esti-ep1} and \eqref{eq:esti-ep2}, 
\begin{align}
\label{eq:esti-e2}
  \big|\mathcal E_{N,2}(\vf)\big| \le \left[ \frac{pN\sigma_N|f''|_\infty}{K^3}+\frac{p(4|f'|_\infty+|f''|_\infty)N}{K^2} \right] \|\vf\|_{L^\infty(\Sigma_T)}. 
\end{align}
In view of \eqref{eq:assp-k}, $\mathcal E_{N,2}$ is negligible in $\mathcal M(\Sigma_T)$.
\end{proof}

\begin{lem}
\label{lem:esti-m}
As $N\to\infty$, $\bE_N [\|\mathcal M_N\|_{H^{-1}(\Sigma_T)}]$ vanishes.
\end{lem}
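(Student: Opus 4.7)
The plan is to rewrite $\mathcal M_N$ as a distributional time derivative, thereby reducing the $H^{-1}(\Sigma_T)$ bound to an $L^2$ control on the individual martingales $M_{i,K}^f$, which in turn follows from a standard carré du champ computation.

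First, I would introduce $h_N(t,x):=\sum_{i=K+1}^{N-K} M_{i,K}^f(t)\chi_{i,N}(x)$. Since $\tfrac1N\bar\vf_i(t) = \int_0^1 \vf(t,x)\chi_{i,N}(x)dx$, the definition \eqref{eq:martingale} rearranges as
\[
  \mathcal M_N(\vf) = -\iint_{\Sigma_T} h_N(t,x)\,\partial_t\vf(t,x)\,dx\,dt, \quad \vf\in\cC_0^\infty(\Sigma_T),
\]
i.e.\ $\mathcal M_N = \partial_t h_N$ in the sense of distributions on $\Sigma_T$. The elementary duality $|\iint_{\Sigma_T} h_N\,\partial_t\vf\,dx\,dt| \le \|h_N\|_{L^2(\Sigma_T)}\|\vf\|_{H_0^1(\Sigma_T)}$ then yields $\|\mathcal M_N\|_{H^{-1}(\Sigma_T)} \le \|h_N\|_{L^2(\Sigma_T)}$, while the disjoint supports of the $\chi_{i,N}$ together with $\int\chi_{i,N}^2dx = 1/N$ give
\[
  \bE_N\big[\|h_N\|_{L^2(\Sigma_T)}^2\big] = \frac1N\sum_{i=K+1}^{N-K}\int_0^T\bE_N\big[\langle M_{i,K}^f\rangle(t)\big]dt.
\]

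It then remains to bound $\langle M_{i,K}^f\rangle(T)$ uniformly in $i$ using the carré du champ identity \eqref{eq:martingale1}. Only bulk bonds $(k,k+1)$ with $i-K\le k\le i+K-1$ contribute, since $\hat\eta_{i,K}$ is supported on $\{\eta_2,\ldots,\eta_{N-1}\}$ for $K+1\le i\le N-K$ and the boundary generators therefore drop out. Applying \eqref{eq:exchange-smooth} to obtain $|\hat\eta_{i,K}^{k,k+1}-\hat\eta_{i,K}|\le K^{-2}$ and noting that each contributing bond has total exchange rate at most $p+\sigma_N$, one finds
\[
  \langle M_{i,K}^f\rangle(T) \le \frac{C|f'|_\infty^2\,NT(p+\sigma_N)}{K^3}
\]
uniformly in $i$. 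Plugging back in and invoking Jensen's inequality,
\[
  \bE_N\big[\|\mathcal M_N\|_{H^{-1}(\Sigma_T)}\big]^2 \le \frac{CT^2|f'|_\infty^2\,N(p+\sigma_N)}{K^3},
\]
which tends to zero by the condition $N\sigma_N/K^3\to 0$ in \eqref{eq:assp-k}. The only subtle point is the distributional identification $\mathcal M_N=\partial_t h_N$, which circumvents the need to control a supremum of stochastic integrals over an infinite-dimensional test space; the remaining carré du champ estimate is routine, and this step is essentially why the mesoscopic scale must satisfy $K^3\gg N\sigma_N$.
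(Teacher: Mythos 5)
Your proof is correct and follows essentially the same route as the paper: both reduce $\bE_N\big[\|\mathcal M_N\|_{H^{-1}(\Sigma_T)}^2\big]$ to $\frac1N\sum_{i}\int_0^T\bE_N\big[\langle M_{i,K}^f\rangle(t)\big]dt$ (you via the identification $\mathcal M_N=\partial_t h_N$ and $L^2$--$H^{-1}$ duality, the paper via a direct Cauchy--Schwarz over $i$ and $t$ followed by Doob's inequality), and then bound the quadratic variation by $C|f'|_\infty^2 N\sigma_N K^{-3}$ using \eqref{eq:exchange-smooth}. Your distributional repackaging of the first step is slightly cleaner but not a different argument, and the conclusion follows identically from $N\sigma_N/K^3\to0$ in \eqref{eq:assp-k}.
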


\begin{proof}
From \eqref{eq:martingale} we see that
\begin{equation}
  \begin{aligned}
    \big|\mathcal M_N(\vf)\big|^2 &\le \int_0^T \frac1N\sum_{i=K+1}^{N-K} \big|M_{i,K}^f(t)\big|^2dt \int_0^T \frac1N\sum_{i=K+1}^{N-K} \bar\vf'(t)^2dt\\
    &\le \frac1N\sum_{i=K+1}^{N-K} \int_0^T \big|M_{i,K}^f(t)\big|^2dt \times C\|\vf\|_{H_0^1(\Sigma_T)}^2.
  \end{aligned}
\end{equation}
The definition of $\|\cdot\|_{H^{-1}(\Sigma_T)}$ together with Doob's inequality yields that
\begin{equation}
  \begin{aligned}
    \bE_N \left[ \big\|\mathcal M_N\big\|_{H^{-1}(\Sigma_T)}^2 \right] &\le \frac CN\sum_{i=K+1}^{N-K} \bE_N \left[ \int_0^T \big|M_{i,K}^f(t)\big|^2dt \right]\\
    &\le \frac{C'}N\sum_{i=K+1}^{N-K} \bE_N \left[ \int_0^T \big\langle M_{i,K}^f \big\rangle(t)dt \right].
  \end{aligned}
\end{equation}
From \eqref{eq:martingale1}, the quadratic variance reads
\begin{equation}
  \begin{aligned}
    \bE_N \left[ \big\langle M_{i,K}^f \big\rangle(t) \right] &= N\int_0^t \sum_{j=1}^{N-1} (p\eta_j+\sigma_N) \left[ f(\hat\eta_i^{j,j+1}) - f(\hat\eta_i) \right]^2ds\\
    &\le CN\sigma_N|f'|_\infty^2\int_0^t \sum_{j=1}^{N-1} \left( \hat\eta_i^{j,j+1} - \hat\eta_i \right)^2dt \le \frac{C|f'|_\infty^2N\sigma_N}{K^3},
  \end{aligned}
\end{equation}
where the last inequality follows from \eqref{eq:exchange-smooth}.
Therefore,
\begin{align}
  \bE_N \left[ \big\|\mathcal M_N\big\|_{H^{-1}(\Sigma_T)}^2 \right] \le \frac{C'|f'|_\infty^2N\sigma_N}{K^3}. 
\end{align}
The right-hand side vanishes from \eqref{eq:assp-k}.
\end{proof}

To decompose $\cA_N$ and $\cS_N$, we make use of the following block estimates. 
Their proofs are postponed to Section \ref{subsec:block}. 

\begin{prop}[One-block estimate]
\label{prop:one-block}
There is some constant $C$, such that 
\begin{align}
  \bE_N \left[ \int_0^T \sum_{i=K}^{N-K} \left[ \hat J_i-J(\hat\eta_i) \right]^2dt \right] \le C \left( \frac{K^2}{\sigma_N} + \frac NK \right). 
\end{align}
\end{prop}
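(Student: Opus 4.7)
The plan is to exploit the algebraic identity
\[
  \hat J_i - J(\hat\eta_i) \;=\; \hat\eta_i^{\,2} - \sum_{i'=-K+1}^{K-1} w_{i'}\,\eta_{i-i'}\eta_{i-i'+1} \;=:\; g_i(\eta),
\]
which displays the integrand as a uniformly bounded local function supported on the block $\Lambda_i=\{i-K+1,\ldots,i+K\}$, measuring the deviation from local equilibrium (under any product Bernoulli of density $\rho$ on $\Lambda_i$, both terms reduce to $\rho^2$). The strategy follows the classical one-block scheme: write
\[
  g_i = \bigl(g_i - \bE^{\mathrm{can}}_{\Lambda_i}[g_i]\bigr) + \bE^{\mathrm{can}}_{\Lambda_i}[g_i],
\]
where $\bE^{\mathrm{can}}_{\Lambda_i}$ denotes expectation under the micro-canonical measure on $\Lambda_i$ at the observed particle number, and estimate the two pieces separately.

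For the equilibrium piece, a direct computation under the canonical measure yields $\bE^{\mathrm{can}}_{\Lambda_i}[\eta_j\eta_{j+1}] = k(k-1)/(|\Lambda_i|(|\Lambda_i|-1))$ with $k$ the particle number in $\Lambda_i$, hence $|\bE^{\mathrm{can}}_{\Lambda_i}[g_i]| = O(1/K)$, together with $\mathrm{Var}^{\mathrm{can}}_{\Lambda_i}(g_i) = O(1/K)$ from the standard moment estimate for spatial averages in the canonical ensemble. Thus $\bE^{\mathrm{can}}_{\Lambda_i}[g_i^2] = O(1/K)$; summing over the $N-2K+1$ blocks and integrating in time contributes at most an order of $TN/K$, matching the second term of the target bound.

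For the fluctuation piece $g_i - \bE^{\mathrm{can}}_{\Lambda_i}[g_i]$, I would invoke the order-$K^{-2}$ spectral gap of the symmetric exclusion on a block of size $K$ combined with the local Dirichlet form, yielding the standard inequality
\[
  \bE_N\!\bigl[\bigl(g_i - \bE^{\mathrm{can}}_{\Lambda_i}[g_i]\bigr)^2\bigr] \;\leq\; C K^2\, \bE_N\bigl[D^{\mathrm{S}}_{\Lambda_i}(\sqrt{f^N_t})\bigr],
\]
where $D^{\mathrm{S}}_{\Lambda_i}$ is the restriction of the symmetric exchange Dirichlet form to the bonds inside $\Lambda_i$. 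Summing over $i$ (each bond lies in $O(K)$ blocks, which is absorbed thanks to the small per-swap variation of $g_i$) and integrating in $t$ bounds the fluctuation contribution by a multiple of $K^2\int_0^T \bE_N[D^{\mathrm{S}}_N(\sqrt{f^N_t})]\,dt$.

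The main obstacle, and the ingredient delivering the first term of the bound, is the global entropy-production estimate
\[
  \int_0^T \bE_N\!\bigl[D^{\mathrm{S}}_N(\sqrt{f^N_t})\bigr]\,dt \;\leq\; \frac{C}{\sigma_N}.
\]
This should follow from the relative-entropy inequality for $H(\mu^N_t\,|\,\nu_*)$ with $\nu_*$ a product Bernoulli measure whose endpoint densities match $\rho_\pm(t)$, in the spirit of \cite{DMOX21}. The asymmetric bulk part of the entropy production is antisymmetric in $L^2(\nu_*)$ up to discrete-integration-by-parts errors that are absorbed by the symmetric Dirichlet form under the scale separation \eqref{eq:assp-k}; the delicate point is the boundary term, which carries the factor $\wts_N\to\infty$, and one must exploit that this acceleration drives the boundary occupations towards $\rho_\pm(t)$ quickly enough that the resulting boundary contribution to the entropy production is non-positive up to $o(1)$. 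Combining the three estimates then delivers the stated bound.
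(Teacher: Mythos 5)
Your overall architecture is the same as the paper's (decompose into a canonical-equilibrium part plus a fluctuation part, control the fluctuation by block Dirichlet forms, and feed in a global bound $\int_0^T\bE_N[\fD_{\exc,N}(t)]\,dt\le C/\sigma_N$), and your algebraic identity for $\hat J_i-J(\hat\eta_i)$ and the $O(1/K)$ canonical computation are correct. But there are two concrete gaps. First, the inequality
\begin{equation*}
  \bE_N\bigl[\bigl(g_i-\bE^{\mathrm{can}}_{\Lambda_i}[g_i]\bigr)^2\bigr]\le CK^2\,\bE_N\bigl[D^{\mathrm S}_{\Lambda_i}(\sqrt{f^N_t})\bigr]
\end{equation*}
is not a consequence of the spectral gap: the gap bounds the canonical variance of a \emph{fixed} function by its own Dirichlet form, not the expectation of a squared observable under a non-equilibrium density $f^N_t$. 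The tool that actually delivers this is the entropy inequality with exponential tilt $e^{aKg_i^2}$, combined with Yau's \emph{log-Sobolev} inequality \eqref{eq:logsob} (strictly stronger than the gap) and a concentration bound $\log\int e^{aKg_i^2}\,d\nu^{2K}(\cdot|\rho_*)\le C$; this yields $CK\,\fD^{i,2K}_{N,\rho_*}(t)+C/K$ \emph{per block}, i.e.\ a factor $K$, not $K^2$. With your $K^2$ per block and the $O(K)$ bond overcounting you land on $K^3/\sigma_N$, which exceeds the claimed $K^2/\sigma_N$; the extra $1/K$ you attribute to "small per-swap variation" is exactly the concentration input that must be made precise via the exponential moment, and as written this step does not close.

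Second, the global estimate $\int_0^T\bE_N[\fD_{\exc,N}]\,dt\le C/\sigma_N$ is asserted but its hard part is left open. Computing the entropy production of $\mu_{N,t}$ relative to a homogeneous Bernoulli measure $\nu_{-,t}$ at density $\rho_-(t)$, the left boundary and the bulk are fine, but the right reservoir contributes a cross term of the form $\wts_N\,\mathfrak R(t)(\beta+\delta)\,\bE_N[\eta_N-\rho_+(t)]$, carrying the diverging factor $\wts_N$. Your heuristic ("the acceleration drives $\eta_N$ to $\rho_+$ fast enough") is circular: quantitative closeness of $\eta_N$ to $\rho_+$ is what the boundary Dirichlet form bound would give you, and that bound is precisely what you are trying to prove. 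The paper breaks this circle with an \emph{a priori} estimate \eqref{eq:priori-bd} that uses only the microscopic conservation law: the time-integrated currents across any two bonds differ by at most one particle, and the bulk current is $O(1+\sigma_N/N)$, whence $\wts_N\,|\bE_N[\int_0^T(\beta+\delta)(\eta_N-\rho_+)\,dt]|\le C$ with no Dirichlet form input. Without this (or an equivalent) ingredient the $1/\sigma_N$ bound, and hence the $K^2/\sigma_N$ term of the proposition, is not established.
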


\begin{prop}[$H^1$ estimate]
\label{prop:h1}
There is some constant $C$, such that 
\begin{align}
  \bE_N \left[ \int_0^T \sum_{i=K}^{N-K} (\nabla\hat\eta_i)^2dt \right] \le C \left( \frac1{\sigma_N} + \frac N{K^3} \right). 
\end{align}
\end{prop}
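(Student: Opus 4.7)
The plan is to apply Dynkin's formula to the convex functional
$$\Phi(\eta) := \frac{1}{2}\sum_{i=K+1}^{N-K} \hat\eta_i^2.$$
The range $[K+1,N-K]$ is crucial: each $\hat\eta_i$ appearing in $\Phi$ depends only on $\eta_2,\ldots,\eta_{N-1}$, so the strongly accelerated boundary generators $\wts_N L_{\pm,t}$ annihilate $\Phi$ and the (potentially very large) factor $\wts_N$ disappears from the analysis. Since $0\le\Phi\le N/2$, Dynkin's identity reads
$$\bE_N\Bigl[\int_0^T NL_{N,s}\Phi\,ds\Bigr] = \bE_N[\Phi(T)-\Phi(0)] \in [-N/2,\,N/2].$$

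To extract the gradient energy, I expand $L_{N,s}\Phi$ via the carré-du-champ identity $L(u^2/2)=uLu+\tfrac12\Gamma(u,u)$ applied to each $\hat\eta_i$, using \eqref{eq:current} to write $L_{N,s}\hat\eta_i = p\nabla^*\hat J_i + \sigma_N\Delta\hat\eta_i$:
$$L_{N,s}\Phi = \sigma_N\sum_i \hat\eta_i\Delta\hat\eta_i + p\sum_i \hat\eta_i\nabla^*\hat J_i + \tfrac12\sum_i \Gamma_{\mathrm{bulk}}(\hat\eta_i,\hat\eta_i).$$
Using $|\hat\eta_i^{j,j+1}-\hat\eta_i|\le 1/K^2$ from \eqref{eq:exchange-smooth} and the fact that only $O(K)$ exchanges affect each $\hat\eta_i$, the carré du champ is bounded by $C(p+\sigma_N)N/K^3$. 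Summation by parts on the diffusive piece produces $-\sigma_N\sum(\nabla\hat\eta_i)^2$ plus boundary remainders controlled by $|\hat\eta_i|\le 1$ and $|\nabla\hat\eta_i|\le 1/K$.

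For the asymmetric piece, a second summation by parts gives $p\sum\hat J_i\nabla\hat\eta_i + O(p)$, and I split $\hat J_i = J(\hat\eta_i) + (\hat J_i - J(\hat\eta_i))$. The ``equilibrium'' term $p\sum J(\hat\eta_i)\nabla\hat\eta_i$ is handled by Taylor expansion of the antiderivative $G(u)=\int_0^u J(v)\,dv$: it becomes a telescoping boundary contribution of order one, plus a remainder of order $\sum(\nabla\hat\eta_i)^2$ which, having only a constant coefficient, is absorbable by $-\sigma_N\sum(\nabla\hat\eta_i)^2$ for $N$ large. The ``fluctuation'' term is treated by Young's inequality calibrated to the available diffusion:
$$\Bigl|p\sum_i (\hat J_i-J(\hat\eta_i))\nabla\hat\eta_i\Bigr| \le \frac{\sigma_N}{8}\sum_i (\nabla\hat\eta_i)^2 + \frac{2p^2}{\sigma_N}\sum_i (\hat J_i-J(\hat\eta_i))^2,$$
and Proposition \ref{prop:one-block} bounds the expectation of the last sum by $C(K^2/\sigma_N + N/K)$. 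Collecting estimates so that a definite fraction of $\sigma_N\sum(\nabla\hat\eta_i)^2$ remains on the left, taking expectation, integrating in time and dividing through by $N\sigma_N/2$ produces the announced bound, with every correction shown to be $O(1/\sigma_N + N/K^3)$ via assumption \eqref{eq:assp-k}.

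\textbf{Main obstacle.} The technically delicate step is the control of the asymmetric flux. A naive bound $|\hat J_i|\le 1$ would give $pN/K$ per unit time, dwarfing the available $\sigma_N\sum(\nabla\hat\eta_i)^2$. The combined trick---antiderivative telescoping for the equilibrium part, Young's inequality with weight $\sigma_N$ for the fluctuation part, then Proposition \ref{prop:one-block} with coefficient $p^2/\sigma_N$---is exactly what closes the estimate. The scale relations $NK^2/\sigma_N^3\to 0$ and $\sigma_N^2/(NK)\to 0$ in \eqref{eq:assp-k} are tailored so that these Young-absorbed one-block contributions, together with the boundary remainders from summation by parts, remain of the stated order $1/\sigma_N + N/K^3$.
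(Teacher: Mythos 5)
Your route --- Dynkin's formula for $\Phi=\tfrac12\sum_i\hat\eta_i^2$ and extraction of the Dirichlet energy from $\sigma_N\sum_i\hat\eta_i\Delta\hat\eta_i$ --- is genuinely different from the paper's, which simply writes $\nabla\hat\eta_{i,K}=K^{-1}(\bar\eta_{i+K,K}-\bar\eta_{i,K})$ and reruns the one-block machinery (relative entropy inequality, log-Sobolev, and the global Dirichlet-form bound $\int_0^T\fD_{\exc,N}(t)\,dt\le C\sigma_N^{-1}$ of Proposition \ref{prop:dir}) on the $2K$-block function $\bar\eta_{i+K,K}-\bar\eta_{i,K}$, obtaining $C(K^2/\sigma_N+N/K)$ and dividing by $K^2$. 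Most of your bookkeeping is sound: the carr\'e du champ gives the $N/K^3$ term, the initial/terminal values give $1/\sigma_N$, and your treatment of the asymmetric flux (antiderivative telescoping, Young's inequality with weight $\sigma_N$, Proposition \ref{prop:one-block}) does close under \eqref{eq:assp-k}.

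There is, however, a genuine gap at the step you dismiss most quickly: the boundary remainders of the summation by parts on the diffusive piece. These are $\sigma_N\hat\eta_{N-K}\nabla\hat\eta_{N-K}-\sigma_N\hat\eta_{K+1}\nabla\hat\eta_K$; with the bounds $|\hat\eta_i|\le1$ and $|\nabla\hat\eta_i|\le K^{-1}$ that you invoke, they contribute $O(\sigma_N/K)$ per unit time, hence a term of order $1/K$ after dividing through by $N\sigma_N$. But \eqref{eq:assp-k} forces $K\ll\sigma_N$ (from $NK^2\ll\sigma_N^3$ together with $\sigma_N\ll N$) and $K^2\gg N$ (from $K^3\gg N\sigma_N\gg N^{12/7}$), so $1/K$ strictly dominates \emph{both} $1/\sigma_N$ and $N/K^3$: this correction is not of the stated order. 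The loss is not harmless, because Proposition \ref{prop:h1} is used in Lemma \ref{lem:esti-s} multiplied by $\sigma_N$, and an extra $1/K$ there would give $\bE_N[\|\cS_{N,2}\|_{\mathcal M(\Sigma_T)}]\lesssim\sigma_N/K\to\infty$, destroying the compensated-compactness bound. Nor can the boundary terms be rescued by the sharper per-site estimate of Remark \ref{rem:bd-block}, which only yields $\sigma_N\bE_N[\int_0^T|\nabla\hat\eta_K|\,dt]\lesssim\sqrt{\sigma_N/K}\to\infty$; the obstruction is intrinsic to producing the gradient energy by summation by parts near the edge of the bulk. The paper's one-block route avoids the issue entirely because the entropy/log-Sobolev estimate applies uniformly to every block, including those adjacent to $i=K$ and $i=N-K$.
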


\begin{lem}
\label{lem:esti-a}
$\cA_N$ satisfies the decomposition in \eqref{eq:com-com}. 
\end{lem}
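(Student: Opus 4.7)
The plan is to integrate $\cA_N(\psi)$ by parts in space, then apply the product rule to split the resulting discrete gradient into a piece carrying a derivative on $\bar\psi_i$ (collected as $Y_N$, controlled in $H^{-1}$) and a piece carrying a derivative on $f'(\hat\eta_i)$ (collected as $Z_N$, controlled in $\mathcal M$). Propositions \ref{prop:one-block} and \ref{prop:h1} then close the bookkeeping, with the mesoscopic scaling \eqref{eq:assp-k} arranged precisely to make the two bounds compatible.

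Set $a_i := \hat J_i - J(\hat\eta_i)$ and $b_i := \bar\psi_i f'(\hat\eta_i)$. Summation by parts yields
\begin{align*}
  \cA_N(\psi) = p\int_0^T \bigl[b_{K+1}a_K - b_{N-K}a_{N-K}\bigr]\,dt - p\int_0^T \sum_{i=K+1}^{N-K-1} a_i\,\nabla b_i\,dt,
\end{align*}
and the product rule gives $\nabla b_i = (\nabla\bar\psi_i)f'(\hat\eta_{i+1}) + \bar\psi_i[f'(\hat\eta_{i+1})-f'(\hat\eta_i)]$. I would collect the first bulk summand together with the two boundary terms into $Y_N(\psi)$, and the second bulk summand into $Z_N(\psi)$.

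For the boundary terms in $Y_N$: since $\psi\in\cC_0^\infty(\Sigma_T)$ vanishes at $x=0,1$, the trace bound \eqref{eq:esti-bd} gives $|\bar\psi_{K+1}|,|\bar\psi_{N-K}|\le C\sqrt{K/N}\,\|\psi\|_{H_0^1(\Sigma_T)}$, which together with $|a_i|\le 2$ and $|f'|_\infty<\infty$ is $O(\sqrt{K/N}\,\|\psi\|_{H_0^1})$. For the bulk part of $Y_N$, Cauchy--Schwarz together with the elementary discrete Sobolev bound $\sum_i(\nabla\bar\psi_i)^2 \le CN^{-1}\int_0^1(\partial_x\psi)^2dx$ gives
\begin{align*}
  |Y_N(\psi)|^2 \le C\|\psi\|_{H_0^1(\Sigma_T)}^2\cdot N^{-1}\int_0^T\sum_i a_i^2\,dt,
\end{align*}
so that Proposition \ref{prop:one-block} and expectation yield
\begin{align*}
  \bE_N\bigl[\|Y_N\|_{H^{-1}(\Sigma_T)}^2\bigr] \le \frac{C}{N}\left(\frac{K^2}{\sigma_N}+\frac{N}{K}\right),
\end{align*}
which vanishes under \eqref{eq:assp-k}.

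For $Z_N$: the bound $|f'(\hat\eta_{i+1})-f'(\hat\eta_i)|\le|f''|_\infty|\nabla\hat\eta_i|$ and the duality of the $\mathcal M(\Sigma_T)$-norm with $L^\infty$ give
\begin{align*}
  \|Z_N\|_{\mathcal M(\Sigma_T)} \le p|f''|_\infty \int_0^T \sum_i |a_i|\,|\nabla\hat\eta_i|\,dt.
\end{align*}
Cauchy--Schwarz and Propositions \ref{prop:one-block} and \ref{prop:h1} then yield
\begin{align*}
  \bE_N\bigl[\|Z_N\|_{\mathcal M(\Sigma_T)}\bigr] \le C\sqrt{\left(\frac{K^2}{\sigma_N}+\frac{N}{K}\right)\left(\frac{1}{\sigma_N}+\frac{N}{K^3}\right)},
\end{align*}
and the expansion $K^2/\sigma_N^2 + 2N/(\sigma_N K) + N^2/K^4$ can be shown to be $o(1)$ termwise under \eqref{eq:assp-k}: $K^2/\sigma_N^2 = (NK^2/\sigma_N^3)(\sigma_N/N)$, $N/(\sigma_N K) = (K^2/\sigma_N^2)(N\sigma_N/K^3)$, and $K^3\gg N\sigma_N\gg N^{12/7}$ forces $N^2/K^4\to0$. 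Hence $\sup_N \bE_N[\|Z_N\|_{\mathcal M}]<\infty$.

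The only subtle point is the treatment of the summation-by-parts boundary contributions via \eqref{eq:esti-bd} (the test function $\psi$ is only $H_0^1$, not pointwise controlled); apart from that, the argument is routine bookkeeping that combines the two block estimates with the mesoscopic scaling, where the choice \eqref{eq:assp-k} is precisely dictated by the requirement that the product of the $H^1$ and one-block bounds remain $O(1)$ while $(K^2/\sigma_N + N/K)/N$ remains $o(1)$.
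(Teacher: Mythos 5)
Your proposal is correct and follows essentially the same route as the paper: summation by parts, splitting the discrete gradient of $\bar\psi_i f'(\hat\eta_i)$ into a $\nabla\bar\psi_i$ part controlled in $H^{-1}(\Sigma_T)$ via Proposition \ref{prop:one-block} and a $\nabla f'(\hat\eta_i)$ part controlled in $\mathcal M(\Sigma_T)$ via Propositions \ref{prop:one-block} and \ref{prop:h1}, with the boundary terms handled through \eqref{eq:esti-bd}. (The overall sign of your bulk term after summation by parts is off, but this is immaterial since all estimates are in absolute value.)
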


\begin{proof}
Without loss of generality, we fix $p=1$.
With $g_i:=\hat J_i-J(\hat\eta_i)$,
\begin{equation}
\label{eq:a1}
  \begin{aligned}
    \cA_N(\vf) = \int_0^T \sum_{i=K+1}^{N-K} f'(\hat\eta_{i+1})g_i\nabla\bar\vf_i\,dt + \int_0^T \sum_{i=K+1}^{N-K} \bar\vf_ig_i\nabla f'(\hat\eta_i)dt\\
    + \int_0^T \big[\bar\vf_{K+1}f'(\hat\eta_{K+1})g_K - \bar\vf_{N-K+1}f'(\hat\eta_{N-K+1})g_{N-K}\big]\,dt.
  \end{aligned}
\end{equation}
Similarly to \eqref{eq:e1}, the boundary integrals vanish uniformly in $H^{-1}(\Sigma_T)$ as $|g_i|\le1$.

Denote the first two integrals in \eqref{eq:a1} by $\cA_{N,1}(\vf)$ and $\cA_{N,2}(\vf)$, respectively.
Applying \CS inequality,
\begin{equation}
  \begin{aligned}
    \big|\cA_{N,1}(\vf)\big|^2 &\le |f'|_\infty^2 \int_0^T \sum_{i=K+1}^{N-K} g_i^2dt \int_0^T \sum_{i=K+1}^{N-K} (\nabla\bar\vf_i)^2dt\\
    &\le \frac{C|f'|_\infty^2}N\|\vf\|_{H_0^1(\Sigma_T)}^2 \int_0^T \sum_{i=K+1}^{N-K} g_i^2dt.
  \end{aligned}
\end{equation}
Proposition \ref{prop:one-block} then shows that
\begin{equation}
  \begin{aligned}
  \bE_N \left[ \big\|\cA_{N,1}\big\|_{H^{-1}(\Sigma_T)}^2 \right]
  &\le \frac{C|f'|_\infty^2}N \bE_N \left[ \int_0^T \sum_{i=K+1}^{N-K} g_i^2dt \right]\\
  &\le C'|f'|_\infty^2 \left( \frac{K^2}{N\sigma_N}+\frac 1K \right).
  \end{aligned}
\end{equation}
For the term $\cA_{N,2}$, again by \CS inequality,
\begin{equation}
  \begin{aligned}
    \big|\cA_{N,2}(\vf)\big|^2
    &\le |f''|_\infty^2\|\vf\|_{L^\infty(\Sigma_T)}^2 \bigg(\int_0^T \sum_{i=K+1}^{N-K} \big|g_i\nabla\hat\eta_i\big|dt\bigg)^2\\
    &\le |f''|_\infty^2\|\vf\|_{L^\infty(\Sigma_T)}^2 \int_0^T \sum_{i=K+1}^{N-K} g_i^2dt \int_0^T \sum_{i=K+1}^{N-K} \big(\nabla\hat\eta_i\big)^2dt. 
  \end{aligned}
\end{equation}
Thanks to Proposition \ref{prop:one-block} and \ref{prop:h1}, we obtain the estimate
\begin{equation}
\label{eq:a2}
  \begin{aligned}
    \bE_N \left[ \big\|\cA_{N,2}\big\|_{\mathcal M(\Sigma_T)} \right]
    &\le C|f''|_\infty \sqrt{\frac{K^2}{\sigma_N}+\frac NK}\sqrt{\frac1{\sigma_N}+\frac N{K^3}}\\
    &= C|f''|_\infty \left( \frac K{\sigma_N}+\frac N{K^2} \right).
  \end{aligned}
\end{equation}
Finally, we conclude the result from \eqref{eq:assp} and \eqref{eq:assp-k}.
\end{proof}

\begin{lem}
\label{lem:esti-s}
$\cS_N$ satisfies the decomposition in \eqref{eq:com-com}.
\end{lem}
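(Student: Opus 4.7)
The plan is to apply discrete summation by parts on the Laplacian so as to transfer one $\nabla$ onto $\bar\vf_i f'(\hat\eta_i)$, which reduces $\cS_N$ to an expression controlled by $(\nabla\hat\eta_i)^2$, at which point Proposition \ref{prop:h1} applies directly. Writing $A_i := \bar\vf_i f'(\hat\eta_i)$ and using $\Delta\hat\eta_i = \nabla\hat\eta_i - \nabla\hat\eta_{i-1}$, the summation by parts produces a bulk term $-\sigma_N\int_0^T\sum_i \nabla A_i\,\nabla\hat\eta_i\,dt$ together with boundary contributions at $i=K+1$ and $i=N-K$. The discrete Leibniz rule $\nabla A_i = \bar\vf_{i+1}\nabla f'(\hat\eta_i) + f'(\hat\eta_i)\nabla\bar\vf_i$ splits the bulk as $\cS_{N,1}+\cS_{N,2}$, where $\cS_{N,1}$ carries the gradient on $\vf$ and $\cS_{N,2}$ carries it on $f'(\hat\eta_i)$.

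The term $\cS_{N,1}$ is placed in the $H^{-1}(\Sigma_T)$-component. A \CS argument with the Riemann-sum estimate $\sum_i(\nabla\bar\vf_i)^2\le CN^{-1}\|\vf\|_{H_0^1(\Sigma_T)}^2$ together with Proposition \ref{prop:h1} gives
\begin{align*}
  \bE_N\big[\|\cS_{N,1}\|_{H^{-1}(\Sigma_T)}^2\big] \le C\,\frac{\sigma_N^2}{N}\Big(\frac{1}{\sigma_N}+\frac{N}{K^3}\Big) = C\Big(\frac{\sigma_N}{N}+\frac{\sigma_N^2}{K^3}\Big),
\end{align*}
which vanishes since $\sigma_N/N\to 0$ by \eqref{eq:assp} and $\sigma_N^2/K^3 = (\sigma_N/N)(N\sigma_N/K^3)\to 0$ by \eqref{eq:assp-k}. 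The term $\cS_{N,2}$ is placed in the $\mathcal M(\Sigma_T)$-component. Bounding $|\nabla f'(\hat\eta_i)|\le|f''|_\infty|\nabla\hat\eta_i|$ and $|\bar\vf_{i+1}|\le\|\vf\|_{L^\infty(\Sigma_T)}$ converts the integrand into $(\nabla\hat\eta_i)^2$, and Proposition \ref{prop:h1} gives
\begin{align*}
  \bE_N\big[\|\cS_{N,2}\|_{\mathcal M(\Sigma_T)}\big] \le C|f''|_\infty\sigma_N\Big(\frac{1}{\sigma_N}+\frac{N}{K^3}\Big) = C|f''|_\infty\Big(1+\frac{N\sigma_N}{K^3}\Big),
\end{align*}
which is uniformly bounded in $N$ by \eqref{eq:assp-k}.

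The boundary remainder is handled exactly as in Lemma \ref{lem:esti-e}: the identity $\hat\eta_{i+1}-\hat\eta_i = K^{-1}(\bar\eta_{i+K,K}-\bar\eta_{i,K})$ gives the deterministic bound $|\nabla\hat\eta_K|,\,|\nabla\hat\eta_{N-K}|\le 1/K$, and the pointwise \CS estimate \eqref{eq:esti-bd} yields $|\bar\vf_{K+1}|+|\bar\vf_{N-K+1}|\le C\sqrt{K/N}\,\|\vf\|_{H_0^1(\Sigma_T)}$; combining these contributes an $H^{-1}$-norm of order $\sigma_N/\sqrt{NK}$, which vanishes by $\sigma_N^2/(NK)\to 0$ in \eqref{eq:assp-k}. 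The only real technical content of the lemma is precisely this scaling bookkeeping, and the main obstacle is to verify that all three conditions in \eqref{eq:assp-k} are actually consumed — $\sigma_N^2/K^3\to 0$ controls $\cS_{N,1}$, $N\sigma_N/K^3\to 0$ keeps $\cS_{N,2}$ bounded in total variation, and $\sigma_N^2/(NK)\to 0$ kills the boundary remainder; no further input beyond Proposition \ref{prop:h1} is needed.
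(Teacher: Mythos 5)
Your proof is correct and follows essentially the same route as the paper: summation by parts on $\Delta=-\nabla^*\nabla$, a discrete Leibniz split into a gradient-on-$\vf$ piece placed in $H^{-1}(\Sigma_T)$ and a gradient-on-$f'(\hat\eta)$ piece placed in $\mathcal M(\Sigma_T)$, both controlled via Proposition \ref{prop:h1}, with the boundary terms killed by $|\nabla\hat\eta_i|\le K^{-1}$ and \eqref{eq:esti-bd}. The resulting bounds $C(\sigma_N/N+\sigma_N^2/K^3)$, $C(1+N\sigma_N/K^3)$ and $\sigma_N/\sqrt{NK}$ match the paper's exactly.
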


\begin{proof}
Recall that $\Delta=-\nabla^*\nabla$.
Similarly to the previous proof,
\begin{equation}
\label{eq:s1}
  \begin{aligned}
    \cS_N(\vf) = \cS_{N,1}(\vf) + \cS_{N,2}(\vf) - \sigma_N\int_0^T \bar\vf_{K+1}f'\big(\hat\eta_{K+1}\big)\nabla\hat\eta_K\,dt\\
    +\;\sigma_N\int_0^T \bar\vf_{N-K+1}f'\big(\hat\eta_{N-K+1}\big)\nabla\hat\eta_{N-K}\,dt,
  \end{aligned}
\end{equation}
where
\begin{equation}
\label{eq:s2}
  \begin{aligned}
  \cS_{N,1}(\vf) &= -\sigma_N\int_0^T \sum_{i=K+1}^{N-K} f'(\hat\eta_{i+1})\nabla\hat\eta_i\nabla\bar\vf_i\,dt,\\
  \cS_{N,2}(\vf) &= -\sigma_N\int_0^T \sum_{i=K+1}^{N-K} \bar\vf_i\nabla\hat\eta_i\nabla f'(\hat\eta_i)dt.
  \end{aligned}
\end{equation}
For the boundary integrals, noting that $|\nabla\hat\eta_i| \le K^{-1}$, by \eqref{eq:esti-bd} we have
\begin{align}
  \bE_N \left[ \sigma_N\int_0^T \bar\vf_{K+1}f'(\hat\eta_{K+1})\nabla\hat\eta_K\,dt \right] \le T|f'|_\infty\frac{\sigma_N}{\sqrt{NK}}\|\vf\|_{H_0^1}.
\end{align}
The term with $\bar\vf_{N-K+1}$ is estimated in the same way.

To deal with $\cS_{N,1}$, first apply \CS inequality to obtain
\begin{equation}
  \begin{aligned}
    \big|\cS_{N,1}(\vf)\big|^2
    &\le |f'|_\infty^2\sigma_N^2 \int_0^T \sum_{i=K+1}^{N-K} \big(\nabla\hat\eta_i\big)^2dt \int_0^T \sum_{i=K+1}^{N-K} (\nabla\bar\vf_i)^2dt\\
    &\le \frac{C|f'|_\infty^2\sigma_N^2}N\|\vf\|_{H_0^1(\Sigma_T)}^2 \int_0^T \sum_{i=K+1}^{N-K} \big(\nabla\hat\eta_i\big)^2dt.
  \end{aligned}
\end{equation}
Therefore, using Proposition \ref{prop:h1} we get
\begin{align}
  \bE_N \left[ \big\|\cS_{N,1}\big\|_{H^{-1}(\Sigma_T)}^2 \right] \le C|f'|_\infty^2 \left( \frac{\sigma_N}N+\frac{\sigma_N^2}{K^3} \right).
\end{align}

The term $\cS_{N,2}$ is bounded in $\mathcal M(\Sigma_T)$.
Indeed,
\begin{align}
  \big|\cS_{N,2}(\vf)\big| \le |f''|_\infty\sigma_N\|\vf\|_{L^\infty(\Sigma_T)} \int_0^T \sum_{i=K+1}^{N-K} (\nabla\hat\eta_i)^2dt.
\end{align}
Therefore, Proposition \ref{prop:h1} gives the estimate
\begin{align}
  \bE_N \left[ \big\|\cS_{N,2}\big\|_{\mathcal M(\Sigma_T)} \right] \le C|f''|_\infty \left( 1+\frac{N\sigma_N}{K^3} \right). 
\end{align}
We again conclude the result from \eqref{eq:assp} and \eqref{eq:assp-k}. 
\end{proof}

\begin{rem}
It is clear from the above proof that $\cS_{N,2}$ in Lemma \ref{lem:esti-s} is the only term which survives in the limit $N\to\infty$.
It is the microscopic origin of the non-zero macroscopic entropy production appeared in \eqref{eq:ent-sol1}.
\end{rem}

\section{Boundary entropy production}
\label{sec:bd-ent}

From Proposition \ref{prop:com-com}, any weak-$\star$ limit point of $\rho_N$ concentrates on some $\rho \in L^\infty(\Sigma_T)$.
Note that $\rho$ may depend on the choice of subsequence.
In this section we show that $\rho$ satisfies \eqref{eq:ent-sol}, thus is the unique $L^\infty$ entropy solution of \eqref{eq:cl1}--\eqref{eq:cl2}.

Before stating the result of this section, we point out the main difficulty that prevents us from applying the direct approach in Section \ref{sec:com-com} and \cite{DMOX22} here.
Taking a test function $\psi$ that is not compactly supported, the estimate \eqref{eq:esti-bd} does not hold any more.
Hence, in performing summation by parts in $\cS_N(\psi)$ like in Lemma \ref{lem:esti-s}, the boundary integrals are out of control because of the large factor $\sigma_N$.
In this section, we introduce a grading argument to treat the divergent integrals mentioned above.
The following definition of the auxiliary functions plays a central role.
For each $N$, define for $x\in[0,1]$ that
\begin{align}
\label{eq:auxiliary}
  \alpha_N(x) := 1 - \mathbf1_{x\in[0,\delta_N]} \Big(\frac{\sigma_N}{\sigma_N+1}\Big)^{Nx} - \mathbf1_{x\in(\delta_N,1]}\Big( \frac{\sigma_N-1}{\sigma_N}\Big)^{N(1-x)},
\end{align}
where $\delta_N\in(0,1)$ is chosen as
\begin{align}
  \delta_N = \frac{\log\sigma_N-\log(\sigma_N-1)}{\log(\sigma_N+1)-\log(\sigma_N-1)}. 
\end{align}
Observe that $\delta_N \approx 2^{-1}+(4\sigma_N)^{-1}+o(\sigma_N^{-2})$ for large $N$. 
For sufficiently large $N$, the function $\alpha_N$ is continuous, piecewise smooth and satisfies that 
\begin{itemize}
\item[(\romannum1).] $\alpha_N(0)=\alpha_N(1)=0$, $\alpha_N(x)\in[0,1)$ for $x\in[0,1]$; 
\item[(\romannum2).] $\lim_{N\to\infty}\alpha_N(x)=1$ for $x\in(0,1)$, uniformly on any compact subset; 
\item[(\romannum3).] $|\alpha'_N(x)| \le 2N\sigma_N^{-1}$ for $x\in(0,\delta_N)\cup(\delta_N,1)$ and $\int |\alpha'_N|dx \le 2$.  
\end{itemize}
By (\romannum2) together with the boundedness of $f(\rho_N)$ and $q(\rho_N)$, to show \eqref{eq:ent-sol} it suffices to prove the next proposition and send $N\to\infty$. 

\begin{prop}
\label{prop:bd-ent-prod}
Assume \eqref{eq:assp} and \eqref{eq:assp-k}, then 
\begin{equation}
  \begin{aligned}
    \lim_{N\to\infty} \bP_N \bigg\{ &-\iint_{\Sigma_T} F(\rho_N,h)\alpha_N\partial_t\psi\,dx\,dt\\
    &- p\iint_{\Sigma_T} Q(\rho_N,h)\alpha_N\partial_x\psi\,dx\,dt \le \Theta_{u_0,\rho_\pm}^{F,h}(\psi) \bigg\}=1,
  \end{aligned}
\end{equation}
for all boundary entropy flux pair $(F,Q)$, $h\in\bR$ and $\psi\in\cC^\infty(\Sigma_T)$ such that $\psi\ge0$, $\psi(T,\cdot)=0$, where $u_0$, $\rho_\pm$ are the initial and boundary data in \eqref{eq:cl2}, and
\begin{align}
  \Theta_{u_0,\rho_\pm}^{F,h}(\psi) := \int_0^1 f(u_0)\psi(0,\cdot)dx + p\int_0^T \big[f(\rho_-)\psi(\cdot,0) + f(\rho_+)\psi(\cdot,1)\big]dt.
\end{align}
\end{prop}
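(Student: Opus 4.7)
The plan is to apply Lemma \ref{lem:ent-prod-decom} to the Lax pair $(F(\cdot,h), Q(\cdot,h))$ with the test function $\vf = \alpha_N\psi$ (extended so that $\vf(T,\cdot)=0$). Decomposing $\partial_x\vf = \alpha_N\partial_x\psi + \alpha_N'\psi$, the left-hand side of the proposition reads
\begin{align*}
 X^F(\hat\nu_N,\alpha_N\psi) + p\iint_{\Sigma_T} Q(\rho_N,h)\,\alpha_N'(x)\,\psi\,dx\,dt,
\end{align*}
and \eqref{eq:ent-prod-decom} expresses $X^F$ as the initial term plus the martingale $\mathcal{M}_N$, the errors $\mathcal{E}_{N,\ell}$, the current term $\mathcal{A}_N$ and the viscosity term $\mathcal{S}_N$. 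The convexity $\partial_u^2 F(\cdot,h)\ge 0$ will produce one non-positive piece of $\mathcal{S}_N$ that we simply discard to obtain the direction $\le$. The delicate point is that $\alpha_N\psi$ is no longer compactly supported in $(0,1)$, so the boundary corrections from the summations by parts of Section \ref{sec:com-com} must now be tracked and, together with the extra integral $p\iint Q\alpha_N'\psi$, matched against the boundary contributions $p\int_0^T F(\rho_\pm,h)\psi(\cdot,0/1)\,dt$ on the right-hand side.

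Most bulk errors from Section \ref{sec:com-com} carry over. The martingale $\mathcal{M}_N(\alpha_N\psi)$ and the correction $\mathcal{E}_{N,2}$ depend only on $\|\alpha_N\psi\|_\infty \le \|\psi\|_\infty$, so Lemmas \ref{lem:esti-m} and \ref{lem:esti-e} apply verbatim. For $\mathcal{E}_{N,1}$, $\mathcal{E}_{N,3}$, $\mathcal{A}_{N,1}$ and $\mathcal{A}_{N,2}$, property (\romannum3) gives $\|\partial_x(\alpha_N\psi)\|_{L^1} \le 2\|\psi\|_\infty + \|\partial_x\psi\|_\infty$ and $\|\partial_x(\alpha_N\psi)\|_{L^2}^2 = O(N/\sigma_N)$; combined with Propositions \ref{prop:one-block}--\ref{prop:h1} and the scaling \eqref{eq:assp-k}, each of these remains negligible in probability. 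The initial contribution $N^{-1}\sum F(\hat\eta_i(0),h)\overline{\alpha_N\psi}_i(0)$ converges in probability to $\int_0^1 F(u_0,h)\psi(0,\cdot)dx$ by \eqref{eq:assp-initial} and the pointwise limit $\alpha_N\to 1$ on $(0,1)$.

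The heart of the argument is the balance among three boundary-concentrated contributions. (a) The principal piece of $\mathcal{S}_N$ after the summation by parts is $-\sigma_N\int_0^T\sum F''(\hat\eta_i,h)(\nabla\hat\eta_i)^2\overline{\alpha_N\psi}_i\,dt\le 0$, and we discard it by convexity of $F$. (b) The summation by parts also leaves the edge term $\sigma_N\int_0^T\overline{\alpha_N\psi}_{K+1}F'(\hat\eta_{K+1},h)\nabla\hat\eta_K\,dt$ and its right-boundary analogue at $i=N-K$. (c) There remains the cross-term $\mathcal{S}_{N,1}(\alpha_N\psi) = -\sigma_N\int_0^T\sum F'(\hat\eta_{i+1},h)\nabla\hat\eta_i\,\nabla\overline{\alpha_N\psi}_i\,dt$, together with the extra $p\iint Q(\rho_N,h)\alpha_N'\psi\,dx\,dt$. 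The pieces (b) and (c) are individually of order $O(1)$ in $N$ because, in the boundary layer of width $\sim\sigma_N/N$, the function $\alpha_N$ is not small enough to automatically kill the prefactor $\sigma_N$.

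The exact cancellation between (b) and (c) is the main obstacle, and the \emph{purpose} of the definition \eqref{eq:auxiliary}: the exponential form of $\alpha_N$ satisfies the discrete identity $\sigma_N\nabla\alpha_N(i/N) = 1-\alpha_N((i+1)/N)$ on $[0,\delta_N]$, which makes $\sigma_N\nabla\alpha_N$ telescope. An Abel rearrangement then slices the cross-term into pieces that cancel pairwise against (b) across successive layers, leaving a residue equal to $p\int_0^T F(\hat\eta_{K+1},h)\psi(t,0)\,dt$ modulo $o(1)$. The boundary one-block estimate (Proposition \ref{prop:bd-one-block}), which is precisely what the factor $\wts_N$ in \eqref{eq:gen} was introduced to produce, then replaces $\hat\eta_{K+1}$ by the reservoir density $\rho_-(t)$, yielding the desired $p\int_0^T F(\rho_-,h)\psi(\cdot,0)\,dt$ on the right-hand side. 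A symmetric computation on $[\delta_N,1]$ produces the right-boundary residue. Verifying that this grading telescope is genuinely $o(1)$ in probability, uniformly in $\psi$ and in the boundary entropy flux pair $(F,Q,h)$, is the technical content of Lemmas \ref{lem:esti-s-bd} and \ref{lem:esti-b} referenced in the introduction.
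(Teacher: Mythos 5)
Your overall skeleton matches the paper's: test Lemma \ref{lem:ent-prod-decom} against $\psi\alpha_N$, split $\partial_x(\psi\alpha_N)=\alpha_N\partial_x\psi+\psi\alpha_N'$, discard the convex viscosity piece $\cS_{N,2}\le0$, and close the boundary layer with the telescoping identity $\sigma_N\nabla\alpha_i^N=\pm(1-\alpha_{i+1}^N)$ and Proposition \ref{prop:bd-one-block}. However, your account of the crucial cancellation is wrong in a way that would block the proof. The cancellation is \emph{not} between your (b) and (c). The edge term (b), $\sigma_N\int\bar\psi^N_{K+1}f'(\hat\eta_{K+1})\nabla\hat\eta_K\,dt$, has nothing to do with the telescope: it must be shown to vanish on its own, and this requires more than the crude bound $|\nabla\hat\eta_K|\le K^{-1}$, which (as you observe) only yields $O(1)$ once $\sigma_N\alpha^N_{K+1}\sim K$. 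The paper invokes the per-site boundary $H^1$ estimate of Remark \ref{rem:bd-block}, $\bE_N[\int_0^T|\nabla\hat\eta_K|^2dt]\le C(\sigma_N^{-1}K^{-1}+K^{-3})$, so that the edge term is $o(1)$ in $L^2(\bP_N)$ (see \eqref{eq:s-bd1}). You never invoke this estimate; without it (b) is an $O(1)$ quantity of indeterminate sign that your scheme cannot absorb, and there is no pairwise cancellation available, since (b) involves a discrete gradient $\nabla\hat\eta_K$ while the Abel residue involves the value $f(\hat\eta_{K+1})$.

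The exact cancellation in fact takes place entirely \emph{inside} your (c): the piece $-\sigma_N\sum\psi_{i+1}\nabla\alpha_i^N\,\nabla f(\hat\eta_i)$ extracted from $\cS_{N,1}(\psi\alpha_N)$ cancels, after Abel summation, against $\mathcal B_N(\psi,\alpha_N)=\int\sum q(\hat\eta_i)\psi_i\nabla\alpha_i^N\,dt$ coming from $p\iint Q(\rho_N,h)\alpha_N'\psi$. For this to work one must first dominate $q(\hat\eta_i)\psi_i\nabla\alpha_i^N$ by $f(\hat\eta_i)\psi_i|\nabla\alpha_i^N|$ via the boundary-entropy inequality $|Q(u,h)|\le F(u,h)$ of \eqref{eq:esti-ent}; this is precisely where the one-sided inequality of the Proposition is generated and where $f=F(\cdot,h)$ appears on both sides of the telescope so that the sums can match. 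You omit this domination entirely, and without it the $Q$-term and the $F$-term are simply different functions and cannot cancel. The residue $\int_0^T f(\hat\eta_{K+1})\psi_{K+1}\,dt$ and its replacement by $\rho_-(t)$ through Proposition \ref{prop:bd-one-block} are correctly identified, but the two missing links above are exactly the content of Lemmas \ref{lem:esti-s-bd} and \ref{lem:esti-b}, which you defer rather than supply.
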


In the following proof, we fix $p=1$, an arbitrary convex boundary entropy flux pair $(F,Q)$ and $h\in\bR$.
We also adopt the short notations $\hat\eta_i$, $\hat J_i$ for $\hat\eta_{i,K}$, $\hat J_{i,K}$.
Denote $(f,q) = (F,Q)(\cdot,h)$, then $(f,q)$ is an entropy flux pair such that $f\ge0$, $f(h)=0$, $f''\ge0$ and
\begin{align}
\label{eq:esti-ent}
  |q(u)| = \left| \int_h^u q'(v)dv \right| \le |J'|_\infty\int_h^u f'(v)dv \le f(u), \quad \forall u\in\bR.
\end{align}
For $\psi$ in Proposition \ref{prop:bd-ent-prod}, recall $\psi_i$, $\bar\psi_i$ defined in \eqref{eq:partition}.
Furthermore, denote 
\begin{align*}
  &\psi^N(t,x) := \psi(t,x)\alpha_N(x), \quad \alpha_i^N := \alpha_N \left( \frac iN-\frac1{2N} \right), \quad \psi_i^N(t) := \psi_i(t)\alpha_i^N, \\
  &\bar\psi_i^N(t) := N\int_0^1 \psi^N(t,x)\chi_{i,N}(x)dx = N\int_{\frac iN-\frac1{2N}}^{\frac iN+\frac1{2N}} \psi(t,x)\alpha_N(x)dx. 
\end{align*}
Recall the operators $\nabla$, $\nabla^*$ and $\Delta$ defined in \eqref{eq:laplacian}.
Observe that $|\nabla\alpha_i^N| \le \sigma_N^{-1}$ and $|\nabla\psi_i^N| \le C\sigma_N^{-1}$ for all $i$. 

\begin{proof}[Proof of Proposition \ref{prop:bd-ent-prod}]
Applying Lemma \ref{lem:ent-prod-decom} with $\psi=\psi^N$,
\begin{equation}
  \begin{aligned}
    X^f(\hat\nu_N,\psi^N) =\;&\frac1N\sum_{i=K+1}^{N-K} f\big(\hat\eta(0)\big)\bar\psi_i^N(0) + \mathcal M_N(\psi^N)\\
    &+ \cA_N(\psi^N) + \cS_N(\psi^N) + \sum_{\ell=1,2,3} \mathcal E_{N,\ell}(\psi^N).
  \end{aligned}
\end{equation}
Since $\partial_x\psi^N=\alpha_N\partial_x\psi+\psi\alpha'_N$, from \eqref{eq:def-e3} we have
\begin{align}
  \mathcal E_{N,3}(\psi^N) = \mathcal E_{N,3}^*(\psi,\alpha_N) - q(0)\int_0^T \left( \int_0^{\frac{2K+1}{2N}} + \int_{1-\frac{2K-1}{2N}}^1 \right) \psi\alpha'_N\,dx\,dt,
\end{align}
where
\begin{align}
\label{eq:def-e3'}
  \mathcal E_{N,3}^*(\psi,\alpha_N) &:= -q(0)\int_0^T \left( \int_0^{\frac{2K+1}{2N}} + \int_{1-\frac{2K-1}{2N}}^1 \right) \alpha_N\partial_x\psi\,dx\,dt.
\end{align}
On the other hand, by the definition of $X^f(\nu,\psi)$ in \eqref{eq:ent-prod1},
\begin{equation}
  \begin{aligned}
    X^f(\hat\nu_N,\psi^N) =\,&-\iint_{\Sigma_T} \big[f(\rho_N)\partial_t\psi+q(\rho_N)\partial_x\psi\big]\alpha_N\,dx\,dt\\
    &- \iint_{\Sigma_T} q(\rho_N)\psi\alpha'_N\,dx\,dt.
  \end{aligned}
\end{equation}
We furthermore compute the last term above as
\begin{equation}
  \begin{aligned}
    \iint_{\Sigma_N} q(\rho_N)\psi\alpha'_N\,dx\,dt =\,&\int_0^T \sum_{i=K+1}^{N-K} q\big(\hat\eta_i\big)\int_{\frac iN-\frac1{2N}}^{\frac iN+\frac1{2N}} \psi\alpha'_N\,dx\,dt\\
    &+ q(0)\int_0^T \left( \int_0^{\frac{2K+1}{2N}} + \int_{1-\frac{2K-1}{2N}}^1 \right) \psi\alpha'_N\,dx\,dt.
  \end{aligned}
\end{equation}
Therefore,
\begin{equation}
  \begin{aligned}
    &-\iint_{\Sigma_T} \big[f(\rho_N)\partial_t\psi + q(\rho_N)\partial_x\psi\big]\alpha_N\,dx\,dt\\
    =\;&\frac1N\sum_{i=K+1}^{N-K} f\big(\hat\eta(0)\big)\bar\psi_i^N(0) + \mathcal M_N(\psi^N) + \cA_N(\psi^N) + \cS_N(\psi^N)\\
    & + \sum_{\ell=1,2} \mathcal E_{N,\ell}(\psi^N) + \mathcal E_{N,3}^*(\psi,\alpha_N) + \int_0^T \sum_{i=K+1}^{N-K} q\big(\hat\eta_i\big) \int_{\frac iN-\frac1{2N}}^{\frac iN+\frac1{2N}} \psi\alpha'_N\,dx\,dt. 
  \end{aligned}
\end{equation}
Rewrite the last integral above as $\mathcal B_N(\psi,\alpha_N)+\mathcal E_{N,4}^*(\psi,\alpha_N)$, where
\begin{align}
  \label{eq:def-b}
  \mathcal B_N(\psi,\alpha_N) &:= \int_0^T \sum_{i=K+1}^{N-K} q\big(\hat\eta_i\big)\psi_i\nabla\alpha_i^Ndt, \\
  \label{eq:def-e4'}
  \mathcal E_{N,4}^*(\psi,\alpha_N) &:= \int_0^T \sum_{i=K+1}^{N-K} q\big(\hat\eta_i\big) \int_{\frac iN-\frac1{2N}}^{\frac iN+\frac1{2N}} (\psi-\psi_i)\alpha'_N\,dx\,dt. 
\end{align}
We then obtain the decomposition
\begin{equation}
\label{eq:bd-ent-prod-decom}
  \begin{aligned}
    &-\iint_{\Sigma_T} \big[f(\rho_N)\partial_t\psi + q(\rho_N)\partial_x\psi\big]\alpha_N\,dx\,dt\\
    =\;&\frac1N\sum_{i=K+1}^{N-K} f\big(\hat\eta(0)\big)\bar\psi_i^N(0) + \mathcal M_N(\psi^N) + \cA_N(\psi^N) + \cS_N(\psi^N)\\
    &+ \mathcal B_N(\psi,\alpha_N) + \sum_{\ell=1,2} \mathcal E_{N,\ell}(\psi^N) + \sum_{\ell=3,4} \mathcal E_{N,\ell}^*(\psi,\alpha_N).
  \end{aligned}
\end{equation}
Taking the limit $N\to\infty$ in \eqref{eq:bd-ent-prod-decom}, Proposition \ref{prop:bd-ent-prod} then follows from \eqref{eq:assp-initial} and Lemma \ref{lem:esti-e-bd}--\ref{lem:esti-b} established below.
\end{proof}

In the rest of this section, we fix $\psi$ and estimate the limit of each term in the right-hand side of \eqref{eq:bd-ent-prod-decom}.
We begin with terms that vanish uniformly.

\begin{lem}
\label{lem:esti-e-bd}
As $N\to\infty$, $|\mathcal E_{N,1}(\psi^N)|$, $|\mathcal E_{N,2}(\psi^N)|$, $|\mathcal E_{N,3}^*(\psi,\alpha_N)|$ and $|\mathcal E_{N,4}^*(\psi,\alpha_N)|$ converges to $0$ uniformly.
\end{lem}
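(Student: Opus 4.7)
The plan is to estimate the four error terms separately, exploiting the explicit properties (i)--(iii) of $\alpha_N$ together with the uniform bounds $\|\psi^N\|_\infty \le \|\psi\|_\infty$ and $\|\partial_x\psi^N\|_\infty \le \|\partial_x\psi\|_\infty + 2N\sigma_N^{-1}\|\psi\|_\infty$. Since $\psi$ is a fixed smooth test function rather than an arbitrary element of a unit ball in $H^1_0$, ``uniformly'' here only requires a deterministic numerical bound, so $H^{-1}$ or $\mathcal M$-norm estimates are not needed.

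Three of the four terms are essentially immediate. For $\mathcal E_{N,2}(\psi^N)$, the pointwise bound \eqref{eq:esti-e2} from Lemma \ref{lem:esti-e} applies verbatim with $\|\psi^N\|_\infty$ in place of $\|\vf\|_\infty$, and the prefactor $N\sigma_N/K^3 + N/K^2$ vanishes by \eqref{eq:assp-k}. For $\mathcal E^*_{N,3}(\psi,\alpha_N)$, the integration region has Lebesgue measure $O(K/N)$ while $\|\alpha_N\|_\infty \le 1$ and $|\partial_x\psi| \le C$, giving an $O(K/N) \to 0$ bound. For $\mathcal E^*_{N,4}(\psi,\alpha_N)$, the Lipschitz continuity of $\psi$ gives $|\psi(t,x) - \psi_i(t)| \le C/N$ on each mesh interval $[(i-1/2)/N,(i+1/2)/N]$, and summing with the total-variation estimate $\int_0^1 |\alpha'_N|\,dx \le 2$ from (iii) yields an overall $O(1/N)$ bound.

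The only delicate term is $\mathcal E_{N,1}(\psi^N)$. I would repeat the summation-by-parts step from Lemma \ref{lem:esti-e}, producing two boundary integrals at $i=K+1$ and $i=N-K+1$ and a bulk sum $\sum (\bar\psi^N_i - \psi^N_i)\nabla^* q(\hat\eta_i)$. The earlier argument controlled the boundary integrals via \eqref{eq:esti-bd}, which used the compact support of $\vf$; that trick is not available here. The replacement comes from the grading construction: property (i) together with the elementary inequality $\alpha_N(x) \le Nx\sigma_N^{-1}$ on $[0,\delta_N]$ (and the symmetric bound on $[\delta_N,1]$) yields $|\alpha^N_{K+1}| + |\alpha^N_{N-K+1}| \le CK/\sigma_N \to 0$ under \eqref{eq:assp-k}. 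For the bulk sum, the derivative bound in (iii) combined with the smoothness of $\psi$ gives $|\bar\psi^N_i - \psi^N_i| \le C/\sigma_N$, and pairing with $|\nabla^* q(\hat\eta_i)| \le C/K$ yields a bound of order $N/(K\sigma_N)$. This in turn is dominated by $N^2/\sigma_N^3$ by $\sigma_N^2/(NK) \to 0$ from \eqref{eq:assp-k}, and hence vanishes thanks to $\sigma_N \gg N^{5/7}$ in \eqref{eq:assp}.

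The main obstacle, as anticipated in the discussion before the statement, is precisely the boundary part of $\mathcal E_{N,1}(\psi^N)$: the loss of compact support in $\psi$ invalidates the Cauchy--Schwarz route of Lemma \ref{lem:esti-e}, and the remedy is the quantitative smallness $\alpha_N^N_i \lesssim K/\sigma_N$ near the spatial boundary, which is exactly the feature built into the auxiliary function by the grading scheme.
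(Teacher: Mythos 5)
Your proposal is correct and follows essentially the same route as the paper: summation by parts for $\mathcal E_{N,1}(\psi^N)$ with the boundary terms killed by the quantitative bound $\alpha_N^{K+1}\lesssim K/\sigma_N$ (the paper's \eqref{eq:bd-estimate}) and the bulk sum by $|\bar\psi_i^N-\psi_i^N|\le C\sigma_N^{-1}$ paired with $|\nabla^*q(\hat\eta_i)|\le CK^{-1}$, while $\mathcal E_{N,2}$, $\mathcal E_{N,3}^*$ and $\mathcal E_{N,4}^*$ are handled by the same elementary pointwise bounds. Your explicit verification that $N/(K\sigma_N)\to 0$ via $\sigma_N^2\ll NK$ and $\sigma_N\gg N^{5/7}$ is a valid unpacking of the paper's appeal to \eqref{eq:assp} and \eqref{eq:assp-k}.
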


\begin{proof}
Recall that the definition of $\mathcal E_{N,1}(\psi^N)$ reads
\begin{equation}
\label{eq:e-bd1}
  \begin{aligned}
    \mathcal E_{N,1}(\psi^N) = \,&\int_0^T \sum_{i=K+1}^{N-K} \big[\bar\psi_i^N\nabla^*q(\hat\eta_i) - q(\hat\eta_i)\nabla\psi_i^N\big] dt\\
    =\,&\int_0^T \psi_{K+1}^Nq(\hat\eta_K)dt - \int_0^T \psi_{N-K+1}^Nq(\hat\eta_{N-K})dt\\
  &+ \int_0^T \sum_{i=K+1}^{N-K} \big(\bar\psi_i^N-\psi_i^N\big)\nabla^*q(\hat\eta_i)dt.
  \end{aligned}
\end{equation}
Since $K \ll \sigma_N \ll N$, for some $c>0$ and sufficiently large $N$,
\begin{align}
\label{eq:bd-estimate}
  \alpha_{K+1}^N = 1- \big[1-(1+\sigma_N)^{-1}\big]^{K+\frac12} \le 1-e^{-\frac{(1+c)K}{\sigma_N}} \le \frac{(1+c)K}{\sigma_N}.
\end{align}
As $\psi_{K+1}^N=\psi_{K+1}\alpha_{K+1}^N$, the first integral in \eqref{eq:e-bd1} is vanishing.
The second one follows similarly.
To deal with the last term in \eqref{eq:e-bd1}, note that for each $i$,
\begin{equation}
  \begin{aligned}
    \left| \bar\psi_i^N-\psi_i^N \right|
    &\le N\int_{\frac iN-\frac1{2N}}^{\frac iN+\frac1{2N}} \left| \psi(t,x)\alpha_N(x) - \psi_i\alpha_i^N \right| dx\\
    &\le \frac1N\big(|\alpha_N|_\infty\|\partial_x\psi\|_{L^\infty(\Sigma_T)} + \|\psi\|_{L^\infty}|\alpha'_N|_\infty\big) \le \frac C{\sigma_N},
  \end{aligned}
\end{equation}
where the last inequality follows from (\romannum3).
As $|\nabla^*q(\hat\eta_i)| \le C'K^{-1}$,
\begin{align}
  \left| \int_0^T \sum_{i=K+1}^{N-K} \big(\bar\psi_i^N-\psi_i^N\big)\nabla^*q(\hat\eta_i)dt \right| \le \frac{C''N}{\sigma_NK}. 
\end{align}
In view of \eqref{eq:assp} and \eqref{eq:assp-k}, $\mathcal E_{N,1}(\psi^N)$ vanishes when $N\to\infty$.

For $|\mathcal E_{N,2}(\psi^N)|$ and $|\mathcal E_{N,3}^*(\psi,\alpha_N)|$, observe that they are respectively bounded from above by $|\mathcal E_{N,2}(\psi)|$ and $|\mathcal E_{N,3}(\psi)|$.
The conclusion then follows directly from Lemma \ref{lem:esti-e}.

We are left with $\mathcal E_{N,4}^*(\psi,\alpha_N)$.
The estimate is also straightforward:
\begin{align}
  \big|\mathcal E_{N,4}^*(\psi,\alpha_N)\big| \le 
|q|_\infty\iint_{\Sigma_T} \big|(\psi-\psi_i)\alpha'_N\big|\,dx \le \frac CN\int_0^1 |\alpha'_N|\,dx.
\end{align}
Noting that $\int_{[0,1]} |\alpha'_N|dx\equiv2$, $\mathcal E_{N,4}^*(\psi,\alpha_N)$ also vanishes.
\end{proof}

\begin{lem}
\label{lem:esti-m-bd}
As $N\to\infty$, $|\mathcal M_N(\psi^N)|\to0$ in $L^2(\bP_N)$.
\end{lem}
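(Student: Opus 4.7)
The plan is to mimic the proof of Lemma \ref{lem:esti-m} almost verbatim, exploiting the crucial fact that the cutoff $\alpha_N$ depends only on $x$, so differentiating $\psi^N = \psi\alpha_N$ in time simply gives $(\partial_t\psi)\alpha_N$, which is uniformly bounded. Thus the non-compact support of $\psi$ is not an obstruction here: unlike the case of Section \ref{sec:com-com} where one had to work in $H^{-1}(\Sigma_T)$, here the $L^\infty$ bound on $\partial_t\psi$ plays the role that $\|\vf\|_{H_0^1(\Sigma_T)}$ played before.

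Concretely, starting from the representation
\begin{equation*}
\mathcal M_N(\psi^N) = -\int_0^T \frac{1}{N}\sum_{i=K+1}^{N-K} (\bar\psi_i^N)'(t)\,M_{i,K}^f(t)\,dt,
\end{equation*}
I would apply Cauchy--Schwarz in the $(t,i)$ variables to get
\begin{equation*}
|\mathcal M_N(\psi^N)|^2 \le \Bigl(\int_0^T \frac{1}{N}\sum_i |(\bar\psi_i^N)'(t)|^2 dt\Bigr)\Bigl(\int_0^T \frac{1}{N}\sum_i |M_{i,K}^f(t)|^2 dt\Bigr).
\end{equation*}
Since $\alpha_N$ is time-independent and $|\alpha_N|\le 1$, the identity $(\bar\psi_i^N)'(t) = N\int (\partial_t\psi)(t,x)\alpha_N(x)\chi_{i,N}(x)\,dx$ immediately gives $|(\bar\psi_i^N)'(t)| \le \|\partial_t\psi\|_{L^\infty(\Sigma_T)}$, so the first bracket is bounded by $T\|\partial_t\psi\|_{L^\infty}^2$.

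For the second bracket, I would invoke Doob's $L^2$ inequality and then the explicit bound on the quadratic variation that was derived inside the proof of Lemma \ref{lem:esti-m}, namely
\begin{equation*}
\bE_N\bigl[\langle M_{i,K}^f\rangle(T)\bigr] \le \frac{C|f'|_\infty^2 N\sigma_N}{K^3},
\end{equation*}
which came from \eqref{eq:exchange-smooth} and the symmetric exclusion part of the generator. Taking expectations and summing over $i$ gives
\begin{equation*}
\bE_N\bigl[|\mathcal M_N(\psi^N)|^2\bigr] \le C\|\partial_t\psi\|_{L^\infty(\Sigma_T)}^2\cdot\frac{N\sigma_N}{K^3},
\end{equation*}
which vanishes as $N\to\infty$ by the mesoscopic scale assumption \eqref{eq:assp-k}.

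There is no real obstacle: all the hard work (the quadratic variation estimate) is already encapsulated in Lemma \ref{lem:esti-m}, and the only new observation needed is that the $t$-independence of $\alpha_N$ makes $(\partial_t\psi^N)$ uniformly bounded even without compact support of $\psi$. I expect this to be the shortest of the Section \ref{sec:bd-ent} estimates; the genuinely new ideas (the grading scheme via $\alpha_N$) become essential only for $\cA_N(\psi^N)$, $\cS_N(\psi^N)$ and $\mathcal B_N(\psi,\alpha_N)$, where the $\sigma_N$-factor or the boundary gradients create divergent contributions that have to be cancelled by $\nabla\alpha_i^N$.
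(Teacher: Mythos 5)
Your proof is correct, and the key observation you isolate — that $\alpha_N$ is independent of $t$, so $(\bar\psi_i^N)'(t) = N\int \partial_t\psi(t,x)\alpha_N(x)\chi_{i,N}(x)\,dx$ is bounded by $\|\partial_t\psi\|_{L^\infty(\Sigma_T)}$ — is exactly what makes this term harmless despite the non-compact support of $\psi$. Your chain (Cauchy--Schwarz in $(t,i)$, then Doob, then the per-site bound $\bE_N[\langle M_{i,K}^f\rangle(T)]\le C|f'|_\infty^2 N\sigma_N K^{-3}$ from the proof of Lemma \ref{lem:esti-m}) yields $\bE_N[|\mathcal M_N(\psi^N)|^2]\le C\|\partial_t\psi\|_{L^\infty}^2\, N\sigma_N K^{-3}$, which vanishes by the first condition in \eqref{eq:assp-k}. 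The paper argues slightly differently: using $\psi(T,\cdot)=0$ and $M_{i,K}^f(0)=0$, it implicitly integrates by parts in time to rewrite $\mathcal M_N(\psi^N)=\tfrac1N\sum_i\int_0^T\bar\psi_i^N\,dM_{i,K}^f$ as the terminal value of a single martingale and computes the quadratic variation of that aggregate; since each bond $j$ affects only the $2K$ averages $\hat\eta_i$ with $|i-j|\le K$, each by at most $K^{-2}$, this gives the sharper bound $C\sigma_N K^{-2}$, smaller than yours by a factor $K/N$. The improvement is immaterial here, as $\sigma_N K^{-2}=(N\sigma_N K^{-3})(K/N)$ and both bounds vanish under \eqref{eq:assp-k}, so your version, which recycles Lemma \ref{lem:esti-m} wholesale and adds only the $t$-independence of $\alpha_N$, is a perfectly adequate and arguably more economical substitute.
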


\begin{proof}
Recalling \eqref{eq:martingale} and \eqref{eq:martingale1}, it is not hard to get
\begin{equation}
  \begin{aligned}
    \big\langle\mathcal M_N(\psi^N)\big\rangle
    &= \int_0^T \sum_{j=1}^{N-1} \frac{p\eta_j+\sigma_N}N \left[ \sum_{i=K+1}^{N-K} \bar\psi_i^N\big(f(\hat\eta_i^{j,j+1}) - f(\hat\eta_i)\big) \right]^2dt\\
    &\le \frac{C\sigma_N}N\int_0^T \sum_{j=1}^{N-1} \left[ \sum_{i=K+1}^{N-K} \left| \bar\psi_i^N\big(\hat\eta_i^{j,j+1} - \hat\eta_i\big) \right| \right]^2dt.
  \end{aligned}
\end{equation}
Doob's inequality and \eqref{eq:exchange-smooth} then yield that
\begin{align}
  \bE_N \left[ \big|\mathcal M_N(\vf)\big|^2 \right] \le 4\bE_N \big[\big\langle\mathcal M_N(\vf)\big\rangle\big] \le C'\sigma_NK^{-2}.
\end{align}
This term vanishes since \eqref{eq:assp} and \eqref{eq:assp-k} contain $K^2 \gg N^{8/7} \gg \sigma_N$.
\end{proof}

\begin{lem}
\label{lem:esti-a-bd}
As $N\to\infty$, $|\cA_N(\psi^N)|\to0$ in $L^2(\bP_N)$.
\end{lem}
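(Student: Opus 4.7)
The plan is to mimic the decomposition in Lemma \ref{lem:esti-a}, now applied to the non-compactly-supported test function $\psi^N=\psi\alpha_N$, and to track $L^2(\bP_N)$ bounds throughout. Summation by parts in the definition of $\cA_N$ will give
\[
\cA_N(\psi^N)=\cA_{N,1}(\psi^N)+\cA_{N,2}(\psi^N)+\cA_N^{\mathrm{bd}}(\psi^N),
\]
where $g_i=\hat J_i-J(\hat\eta_i)$, the bulk pieces are exactly \eqref{eq:a1}--\eqref{eq:a2} with $\bar\psi_i^N$ in place of $\bar\vf_i$, and $\cA_N^{\mathrm{bd}}$ collects the surface integrals at $i=K$ and $i=N-K+1$. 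Each piece will be shown to be $L^2(\bP_N)$-small separately.

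First I would dispose of the surface term: it carries the factor $\bar\psi_{K+1}^N\le\|\psi\|_\infty\alpha_{K+1}^N$, and \eqref{eq:bd-estimate} gives $\alpha_{K+1}^N\le(1+c)K/\sigma_N$. Together with $|f'|\le|f'|_\infty$ and $|g_K|\le 2$, this yields the deterministic bound $|\cA_N^{\mathrm{bd}}|\le CTK/\sigma_N$, vanishing by \eqref{eq:assp-k}. Second, for $\cA_{N,1}(\psi^N)$, the decisive new input is $\int_0^T\sum_i(\nabla\bar\psi_i^N)^2\,dt\le C/\sigma_N$. This follows from $\|\alpha_N'\|_{L^2(0,1)}^2\le\|\alpha_N'\|_\infty\cdot\|\alpha_N'\|_{L^1}\le 4N/\sigma_N$ by property (\romannum3) of $\alpha_N$, combined with the smoothness of $\psi$ and a standard discretisation. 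Cauchy--Schwarz then gives
\[
|\cA_{N,1}(\psi^N)|^2\le\frac{C|f'|_\infty^2}{\sigma_N}\int_0^T\sum_i g_i^2\,dt,
\]
and Proposition \ref{prop:one-block} in expectation produces $\bE_N[|\cA_{N,1}|^2]\le C(K^2/\sigma_N^2+N/(\sigma_N K))\to 0$ by \eqref{eq:assp-k}.

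The main obstacle is $\cA_{N,2}(\psi^N)$. The mean-value bound $|\nabla f'(\hat\eta_i)|\le|f''|_\infty|\nabla\hat\eta_i|$ together with Cauchy--Schwarz in $(i,t)$ gives
\[
|\cA_{N,2}(\psi^N)|^2\le C\int_0^T\sum_i g_i^2\,dt\,\cdot\,\int_0^T\sum_i(\nabla\hat\eta_i)^2\,dt.
\]
Propositions \ref{prop:one-block} and \ref{prop:h1} control the two factors individually in expectation, and the algebraic identity
\[
\Big(\frac{K^2}{\sigma_N}+\frac{N}{K}\Big)\Big(\frac{1}{\sigma_N}+\frac{N}{K^3}\Big)=\Big(\frac{K}{\sigma_N}+\frac{N}{K^2}\Big)^2,
\]
together with \eqref{eq:assp-k}, shows that the product of the two first-moment bounds is $o(1)$ squared. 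To convert this into an $L^2(\bP_N)$ estimate, my plan is to establish a joint bound
\[
\bE_N\Big[\int\sum g_i^2\,dt\cdot\int\sum(\nabla\hat\eta_i)^2\,dt\Big]\le C\,\bE_N\Big[\int\sum g_i^2\Big]\bE_N\Big[\int\sum(\nabla\hat\eta_i)^2\Big]+o(1)^2.
\]
Combined with the product of the Propositions \ref{prop:one-block}--\ref{prop:h1} bounds this delivers $\bE_N[|\cA_{N,2}(\psi^N)|^2]\le C(K/\sigma_N+N/K^2)^2\to 0$.

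The hardest part of the plan is this decoupling. A naive Cauchy--Schwarz in $\omega$ combined with the pointwise bounds $\int\sum g_i^2\le CTN$ and $\int\sum(\nabla\hat\eta_i)^2\le CTN/K^2$ only yields $\bE_N[|\cA_{N,2}|^2]\le CT(N/\sigma_N+N^2/K^3)$, whose first term diverges since $\sigma_N\ll N$. The resolution I propose is to exploit that both $g_i$ and $\nabla\hat\eta_i$ depend on occupation variables only in a window of size $O(K)$ around site $i$: by partitioning $\{K+1,\dots,N-K\}$ into well-separated groups of indices at mutual distance $\gg K$ and reapplying, at the level of variances, the Dirichlet-form / entropy-inequality machinery that underlies the proofs of Propositions \ref{prop:one-block} and \ref{prop:h1}, one obtains second-moment bounds on each of $\int\sum g_i^2$ and $\int\sum(\nabla\hat\eta_i)^2$ that factor as a product of first moments up to a controlled covariance error, which is precisely what closes the argument.
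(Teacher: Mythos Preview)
Your decomposition into $\cA_{N,1}+\cA_{N,2}+\cA_N^{\mathrm{bd}}$ and your treatment of the boundary term and of $\cA_{N,1}$ are correct and essentially the same as the paper's proof. (Your $L^2$ bound on $\sum_i(\nabla\bar\psi_i^N)^2$ via $\|\alpha_N'\|_{L^2}^2\le\|\alpha_N'\|_\infty\|\alpha_N'\|_{L^1}$ is even a bit sharper than the paper's pointwise estimate $|\nabla\bar\psi_i^N|\le C\sigma_N^{-1}$, though either suffices.)

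The gap is in your handling of $\cA_{N,2}$. The decoupling you propose is both unnecessary and not justified by the sketch you give. The entropy--Dirichlet-form machinery behind Propositions~\ref{prop:one-block} and~\ref{prop:h1} produces \emph{first-moment} bounds of the form $\bE_N[\int_0^T\sum_i f_i(\eta(t))\,dt]$; there is no mechanism in that argument by which ``partitioning into well-separated groups'' would upgrade these to second-moment bounds or yield a factorisation $\bE_N[AB]\le C\,\bE_N[A]\,\bE_N[B]$ up to small error. Spatial locality of $g_i$ and $\nabla\hat\eta_i$ does not by itself imply approximate independence under $\mu_{N,t}$, and the relative-entropy inequality is linear in the functional being estimated.

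What the paper actually does is simpler: it takes the square root of the Cauchy--Schwarz bound you wrote, applies Cauchy--Schwarz once more in $\omega$, and quotes \eqref{eq:a2} directly to obtain
\[
\bE_N\big[|\cA_{N,2}(\psi^N)|\big]\le C\sqrt{\bE_N\!\Big[\int\sum g_i^2\Big]}\,\sqrt{\bE_N\!\Big[\int\sum(\nabla\hat\eta_i)^2\Big]}\le C\Big(\frac{K}{\sigma_N}+\frac{N}{K^2}\Big)\to 0.
\]
This is an $L^1(\bP_N)$ bound, not $L^2$. The ``$L^2$'' in the lemma statement is a mild overstatement for this piece: the paper's proof establishes $L^2$ convergence for $\cA_N^{\mathrm{bd}}$ and $\cA_{N,1}$ but only $L^1$ for $\cA_{N,2}$. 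Since the only use of the lemma is in the proof of Proposition~\ref{prop:bd-ent-prod}, which asserts a limit in \emph{probability}, $L^1$ convergence of $|\cA_N(\psi^N)|$ is all that is required. You should simply drop the decoupling step and conclude with the $L^1$ bound.
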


\begin{proof}
Recall that $g_i=\hat J_i-J(\hat\eta_i)$.
Applying \ref{eq:a1},
\begin{equation}
  \begin{aligned}
    \cA_N(\psi^N) =\;&\cA_{N,1}(\psi^N) + \cA_{N,2}(\psi^N)\\
    + \int_0^T &\big[\bar\psi_{K+1}^Nf'\big(\hat\eta_{K+1}\big)g_K - \bar\psi_{N-K+1}^Nf'\big(\hat\eta_{N-K+1}\big)g_{N-K}\big]\,dt,
  \end{aligned}
\end{equation}
where
\begin{equation}
  \begin{aligned}
    \cA_{N,1}(\psi^N) &:= \int_0^T \sum_{i=K+1}^{N-K} f'\big(\hat\eta_{i+1}\big)g_i\nabla\bar\psi_i^Ndt,\\
    \cA_{N,2}(\psi^N) &:= \int_0^T \sum_{i=K+1}^{N-K} \bar\psi_i^Ng_i\nabla f'\big(\hat\eta_i\big)dt.
  \end{aligned}
\end{equation}
The boundary integrals are negligible since $g_i$ is bounded and \eqref{eq:bd-estimate}:
\begin{align}
  \left| \int_0^T \big[\bar\psi_{K+1}^Nf'(\hat\eta_{K+1})g_K - \bar\psi_{N-K+1}^Nf'(\hat\eta_{N-K+1})g_{N-K}\big]\,dt \right| \le \frac{CK}{\sigma_N}.
\end{align}

The estimate for the remaining two terms is similar to Lemma \ref{lem:esti-a}.
Using \CS inequality,
\begin{align}
  \big|\cA_{N,1}(\psi^N)\big|^2 \le |f'|_\infty^2\int_0^T \sum_{i=K+1}^{N-K} g_i^2dt \int_0^T \sum_{i=K+1}^{N-K} \big|\nabla\bar\psi_i^N\big|^2dt.
\end{align}
Notice that $|\nabla\bar\psi_i^N| \le C\sigma_N^{-1}$.
By Proposition \ref{prop:one-block},
\begin{align}
  \bE_N \left[ \big|\cA_{N,1}(\psi^N)\big|^2 \right] \le C \left( \frac{K^2}{\sigma_N} + \frac NK \right) \frac N{\sigma_N^2} = C\left( 1 + \frac{N\sigma_N}{K^3} \right) \frac{NK^2}{\sigma_N^3}. \end{align}
Hence, we can conclude from \eqref{eq:assp} and \eqref{eq:assp-k}.
Similarly for $\cA_{N,2}$,
\begin{align}
  \big|\cA_{N,2}(\psi^N)\big|^2 \le |f''|_\infty^2\|\psi\|_{L^\infty}^2 \int_0^T \sum_{i=K+1}^{N-K} g_i^2dt \int_0^T \sum_{i=K+1}^{N-K} \big|\nabla\hat\eta_i\big|^2dt.
\end{align}
The estimate follows exactly the same as \eqref{eq:a2}.
\end{proof}

The limit of $\cS_N(\psi^N)$ differs essentially from that of $\cS_N(\vf)$ in Lemma \ref{lem:esti-s}.
We see below the divergent term mentioned at the beginning of this section.

\begin{lem}
\label{lem:esti-s-bd}
As $N\to\infty$, $\cS_N(\psi^N)$ satisfies that 
\begin{align}
  \lim_{N\to\infty} \bP_N \left\{ \cS_N(\psi^N) + \sigma_N\int_0^T \sum_{i=K+1}^{N-K} \nabla\big(\psi_if(\hat\eta_i)\big)\nabla\alpha_i^Ndt \le 0 \right\} = 1. 
\end{align}
\end{lem}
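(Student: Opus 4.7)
My plan is to combine convexity of $f:=F(\cdot,h)$ (convex, non-negative, with $f(h)=0$ and $f''\ge 0$) with a summation-by-parts carefully chosen to route the singular gradient of $\alpha_N$ into the divergent term $B_N:=\sigma_N\int_0^T\sum_i \nabla(\psi_i f(\hat\eta_i))\,\nabla\alpha_i^N\,dt$. The starting point is the decomposition of Lemma \ref{lem:esti-s} applied with $\vf=\psi^N$:
\[
\cS_N(\psi^N) = \cS_{N,1}(\psi^N) + \cS_{N,2}(\psi^N) + \mathcal B_S,
\]
with $\cS_{N,2}(\psi^N) = -\sigma_N\int_0^T\sum \bar\psi_i^N f''(\xi_i)(\nabla\hat\eta_i)^2\,dt$ already $\le 0$ because $\bar\psi_i^N\ge 0$ (since $\psi,\alpha_N\ge 0$) and $f''\ge 0$, and $\mathcal B_S$ collecting the boundary contributions. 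It then suffices to show that $\cS_{N,1}(\psi^N)+B_N+\mathcal B_S\to 0$ in probability.

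For the bulk piece $\cS_{N,1}(\psi^N)+B_N$, I would first replace $\bar\psi_i^N$ by $\psi_i\alpha_i^N$ (pointwise error $O(\sigma_N^{-1})$ by property (\romannum3) of $\alpha_N$ and smoothness of $\psi$; aggregate error handled via Cauchy--Schwarz and Proposition \ref{prop:h1}). Using the Taylor--convexity identity $f'(\hat\eta_{i+1})\nabla\hat\eta_i = \nabla f(\hat\eta_i) + \tfrac12 f''(\xi_i)(\nabla\hat\eta_i)^2$ together with the expansions $\nabla(\psi_i\alpha_i^N)=\alpha_{i+1}^N\nabla\psi_i+\psi_i\nabla\alpha_i^N$ and $\nabla(\psi_i f(\hat\eta_i))=f(\hat\eta_{i+1})\nabla\psi_i+\psi_i\nabla f(\hat\eta_i)$, the two occurrences of $\pm\sigma_N\sum\psi_i\nabla\alpha_i^N\nabla f(\hat\eta_i)$ cancel exactly. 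The surviving residual
\[
\sigma_N\int_0^T\sum_i \nabla\psi_i\,\bigl[f(\hat\eta_{i+1})\nabla\alpha_i^N - \alpha_{i+1}^N\nabla f(\hat\eta_i)\bigr]\,dt
\]
vanishes in $L^1(\bP_N)$: the first summand is $O(\sigma_N/N)$ by $\|\nabla\psi\|_\infty=O(1/N)$ and $\sum|\nabla\alpha_i^N|\le 2$ (property (\romannum3)); the second summand, together with the $(\nabla\hat\eta)^2$-remainder from the Taylor step, is controlled as for $\cA_{N,2}$ in Lemma \ref{lem:esti-a}, combining Proposition \ref{prop:h1} with the scaling \eqref{eq:assp-k}.

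The main obstacle is the boundary term
\[
\mathcal B_S = -\sigma_N\!\int_0^T\! \bar\psi_{K+1}^N f'(\hat\eta_{K+1})\nabla\hat\eta_K\,dt + \sigma_N\!\int_0^T\! \bar\psi_{N-K+1}^N f'(\hat\eta_{N-K+1})\nabla\hat\eta_{N-K}\,dt,
\]
each summand of which is of order $O(1)$ with no \emph{a priori} sign, because $\sigma_N\bar\psi_{K+1}^N = O(K)$ by \eqref{eq:bd-estimate} while $|\nabla\hat\eta|\le K^{-1}$. My plan is to apply the same Taylor--convexity identity to split $\mathcal B_S$ into (a) a linear piece $-\sigma_N\int\bar\psi_{K+1}^N\nabla f(\hat\eta_K)\,dt + \sigma_N\int\bar\psi_{N-K+1}^N\nabla f(\hat\eta_{N-K})\,dt$, and (b) a quadratic remainder which is pointwise $O(K)\cdot O(K^{-2}) = O(1/K)\to 0$. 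Piece (a) is then treated via the boundary one-block estimate (Proposition \ref{prop:bd-one-block}): the accelerated reservoirs force $\hat\eta_K$ and $\hat\eta_{N-K}$ to relax to $\rho_\pm$, so that $\nabla f(\hat\eta_K), \nabla f(\hat\eta_{N-K})$ go to zero in probability at a rate that dominates the $O(K)$ prefactor $\sigma_N\bar\psi^N$. The exponential design of $\alpha_N$ — which balances $\sigma_N|\nabla\alpha^N|\asymp 1$ against $\alpha_{K+1}^N\asymp K/\sigma_N\to 0$ — together with the scale arithmetic in \eqref{eq:assp}--\eqref{eq:assp-k}, is precisely what makes this last cancellation close.
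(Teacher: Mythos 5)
Your treatment of the bulk follows the paper's own route: start from the decomposition \eqref{eq:s1}, discard $\cS_{N,2}(\psi^N)\le 0$ by convexity of $f$ and positivity of $\bar\psi_i^N$, and show that $\cS_{N,1}(\psi^N)$ cancels $\sigma_N\int_0^T\sum_i\nabla(\psi_i f(\hat\eta_i))\nabla\alpha_i^N\,dt$ up to errors controlled by Proposition \ref{prop:h1} and \eqref{eq:assp-k}. The residual $\sigma_N\sum_i\nabla\psi_i\,\alpha_{i+1}^N\nabla f(\hat\eta_i)$ you isolate is indeed closable by Cauchy--Schwarz and Proposition \ref{prop:h1} (the paper instead performs one more summation by parts, but both routes give a vanishing bound), and the replacement of $\nabla\bar\psi_i^N$ by $\nabla(\psi_i\alpha_i^N)$ requires the second-order estimates \eqref{eq:psi1}--\eqref{eq:psi3} rather than the crude $O(\sigma_N^{-1})$ bound you quote, but that is a reparable detail.

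The genuine gap is in the boundary term $\mathcal B_S$. Your ``linear piece'' is $\sigma_N\bar\psi_{K+1}^N\nabla f(\hat\eta_K)=O(K)\,\nabla f(\hat\eta_K)$, and you propose to control it with Proposition \ref{prop:bd-one-block}. That estimate only gives $\bE_N[\int_0^T|\hat\eta_K-\rho_-(t)|^2dt]\le C(K\sigma_N^{-1}+\wts_N^{-1}+K^{-1})$, so bounding $|\nabla f(\hat\eta_K)|\le|f'|_\infty(|\hat\eta_{K+1}-\rho_-|+|\hat\eta_K-\rho_-|)$ by the triangle inequality and multiplying by the $O(K)$ prefactor yields $\bE_N[|\mathcal B_S|^2]\lesssim K^3\sigma_N^{-1}+K^2\wts_N^{-1}+K$, which diverges: \eqref{eq:assp-k} forces $K^3\gg N\sigma_N$, hence $K^3/\sigma_N\gg N$, the last term $K\to\infty$ on its own, and $\wts_N$ is only assumed to diverge, arbitrarily slowly. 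Passing through $\rho_-$ destroys the essential cancellation in the discrete gradient: $\nabla\hat\eta_K=(\bar\eta_{2K,K}-\bar\eta_{K,K})/K$ carries an extra factor $K^{-1}$ relative to $\hat\eta_K-\rho_-$ that the triangle inequality cannot see. The correct tool is the pointwise two-block ($H^1$-type) estimate of Remark \ref{rem:bd-block}, $\bE_N[\int_0^T|\nabla\hat\eta_K|^2dt]\le C(\sigma_N^{-1}K^{-1}+K^{-3})$, which gives $\bE_N[|\mathcal B_S|^2]\le C(K\sigma_N^{-1}+K^{-1})\to0$ directly and makes your Taylor split of $\mathcal B_S$ unnecessary. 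Proposition \ref{prop:bd-one-block} is indeed needed, but only later, to identify the boundary values $f(\rho_\pm)$ in Lemma \ref{lem:esti-b}; it is too weak to close the present lemma.
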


\begin{proof}
As in the previous lemma, we start from integration by parts.
From \eqref{eq:s1},
\begin{equation}
\label{eq:s-bd}
  \begin{aligned}
    \cS_N(\psi^N) = \cS_{N,1}(\psi^N) + \cS_{N,2}(\psi^N) - \sigma_N\int_0^T \bar\psi_{K+1}^Nf'\big(\hat\eta_{K+1}\big)\nabla\hat\eta_K\,dt\\
  +\;\sigma_N\int_0^T \bar\psi_{N-K+1}^Nf'\big(\hat\eta_{N-K+1}\big)\nabla\hat\eta_{N-K}\,dt,
%    &- \sigma_N\int_0^T \sum_{i=K+1}^{N-K} \bar\psi_i^N\nabla\hat\eta_i\nabla f'(\hat\eta_i)dt - \sigma_N\int_0^T \sum_{i=K+1}^{N-K} f'(\hat\eta_{i+1})\nabla\hat\eta_i\nabla\bar\psi_i^Ndt. 
  \end{aligned}
\end{equation}
where $\cS_{N,1}(\psi^N)$, $\cS_{N,2}(\psi^N)$ are defined by \eqref{eq:s2} with $\bar\vf_i$ replaced by $\bar\psi_i^N$.
To deal with the boundary terms, observe from \eqref{eq:bd-estimate} that
\begin{align}
  \bigg|\sigma_N\int_0^T \bar\psi_{K+1}^Nf'\big(\hat\eta_{K+1}\big)\nabla\hat\eta_K\,dt\bigg| \le K\|\psi\|_{L^\infty}|f'|_\infty \int_0^T \big|\nabla\hat\eta_K\big|dt.
\end{align}
Notice that the elementary estimate $|\nabla\hat\eta_K| \le K^{-1}$ is insufficient here since it leads to an upper bound of constant order.
Instead, we use Remark \ref{rem:bd-block} to obtain that
\begin{align}
\label{eq:s-bd1}
  \bE_N \left[ \bigg|\sigma_N\int_0^T \bar\psi_{K+1}^Nf'\big(\hat\eta_{K+1}\big)\nabla\hat\eta_K\,dt\bigg|^2 \right] \le C \left( \frac K{\sigma_N}+\frac1K \right),
\end{align}
which vanishes as $N\to\infty$.
The other boundary term is estimated similarly.

For the remaining two terms, we first look at $\mathcal S_{N,2}(\psi^N)$.
The convexity of $f$ assures that $\nabla\hat\eta_i\nabla f'(\hat\eta_i) \ge 0$ for all $i$, therefore,
\begin{align}
\label{eq:s-bd2}
  \mathcal S_{N,2}(\psi^N) = -\sigma_N\int_0^T \sum_{i=K+1}^{N-K} \bar\psi_i^N\nabla\hat\eta_i\nabla f'(\hat\eta_i)dt \le 0.
\end{align}
For $\cS_{N,1}(\psi^N)$, the following replacement holds as $|\nabla\bar\psi_i^N| \le C\sigma_N^{-1}$:
\begin{align}
\label{eq:s-bd3}
  \left| \cS_{N,1}(\psi^N) + \sigma_N\int_0^T \sum_{i=K+1}^{N-K} \nabla f(\hat\eta_i)\nabla\bar\psi_i^Ndt \right| \le \frac{CN}{K^2}. 
\end{align}
Plugging \eqref{eq:s-bd1}--\eqref{eq:s-bd3} into \eqref{eq:s-bd} and using \eqref{eq:assp}, \eqref{eq:assp-k}, 
\begin{align}
\label{eq:s-bd4}
  \lim_{N\to\infty} \bP_N \left\{ \cS_N(\psi^N) + \sigma_N\int_0^T \sum_{i=K+1}^{N-K} \nabla f(\hat\eta_i)\nabla\bar\psi_i^Ndt \le 0 \right\} = 1. 
\end{align}

Now we look closely at the integral appeared in \eqref{eq:s-bd4}.
Notice that
\begin{align}
  \nabla\bar\psi_i^N = N\int_{\frac iN-\frac1{2N}}^{\frac iN+\frac1{2N}} \left[ \psi \left( t,x+\frac1N \right) \alpha_N \left( x+\frac1N \right) - \psi(t,x)\alpha_N(x) \right] dx. 
\end{align}
With the notation $\bar\alpha_i^N := N\int \alpha_N(x)\chi_{i,N}(x)dx$, 
\begin{align}
\label{eq:psi1}
  \big|\nabla\bar\psi_i^N - \alpha_i^N\nabla\bar\psi_i - \psi_{i+1}\nabla\bar\alpha_i^N\big| \le \frac{2\|\partial_x\psi\|_{L^\infty}|\alpha'_N|_\infty}{N^2} \le \frac C{N\sigma_N}. 
\end{align}
Moreover, observe that for large $N$ and $i \le N/4$,
\begin{align}
\label{eq:psi2}
  \big|\nabla\bar\alpha_i^N-\nabla\alpha_i^N\big| \le CN^{-2}\max \left\{ |\alpha''_N(x)|; 0<x<\frac14 \right\} \le C\sigma_N^{-2}.
\end{align}
As $\delta_N=1/2+o_N(1)$, for $N/4 < i \le N\delta_N$,
\begin{align}
\label{eq:psi3}
  \big|\nabla\alpha_i^N-\nabla\bar\alpha_i^N\big| \le |\nabla\alpha_i^N| + |\nabla\bar\alpha_i^N| \le 2e^{-N(4\sigma_N)^{-1}} \le C\sigma_N^{-2}. 
\end{align}
By \eqref{eq:psi1}--\eqref{eq:psi3} and similar bound for $i>N\delta_N$, 
\begin{equation}
  \begin{aligned}
    &\sigma_N \left| \int_0^T \sum_{i=K+1}^{N-K} \nabla f(\hat\eta_i)\big(\nabla\bar\psi_i^N - \alpha_i^N\nabla\bar\psi_i - \psi_{i+1}\nabla\alpha_i^N\big)dt \right|\\
    \le\;&CN\sigma_N \cdot |f'|_\infty\sup_i \big|\nabla\hat\eta_i\big| \cdot \left( \frac1{N\sigma_N}+\frac1{\sigma_N^2} \right) \le \frac{CN}{\sigma_NK}.
  \end{aligned}
\end{equation}
Therefore, we are allowed to replace $\nabla\bar\psi_i^N$ in \eqref{eq:s-bd4} by $\alpha_i^N\nabla\bar\psi_i+\psi_{i+1}\nabla\alpha_i^N$.

To complete the proof, we need to compute that
\begin{equation}
  \begin{aligned}
    \sum_{i=K+1}^{N-K} \big(\alpha_i^N\nabla\bar\psi_i + \psi_{i+1}\nabla\alpha_i^N\big)\nabla f(\hat\eta_i) - \sum_{i=K+1}^{N-K} \nabla\big(\psi_if(\hat\eta_i)\big)\nabla\alpha_i^N \\
    = \alpha_i^N\nabla\bar\psi_if(\hat\eta_{i+1})\Big|_{i=K}^{N-K} + \sum_{i=K+1}^{N-K} f(\hat\eta_i) \big[\nabla^*(\alpha_i^N\nabla\bar\psi_i) - \nabla\psi_i\nabla\alpha_i^N\big].
  \end{aligned}
\end{equation}
By \eqref{eq:bd-estimate}, the contribution of the boundary terms is negligible:
\begin{align}
\label{eq:s-bd5}
  \sigma_N \left| \int_0^T \alpha_i^N\nabla\bar\psi_if(\hat\eta_{i+1})\Big|_{i=K}^{N-K}dt \right| \le C\sigma_N\frac K{\sigma_N}\frac1N \le \frac{CK}N.
\end{align}
Since $|\nabla^*(\alpha_i^N\nabla\bar\psi_i)| \le |\alpha_{i-1}^N\Delta\bar\psi_i| + |\nabla\alpha_{i-1}^N\nabla\bar\psi_i| \le CN^{-2}+|\nabla\alpha_{i-1}^N\nabla\bar\psi_i|$,
\begin{equation}
\label{eq:s-bd6}
  \begin{aligned}
    &\sigma_N \left| \int_0^T \sum_{i=K+1}^{N-K} f(\hat\eta_i) \big[\nabla^*(\alpha_i^N\nabla\bar\psi_i) - \nabla\psi_i\nabla\alpha_i^N\big]\,dt \right| \\
    \le\;&\frac{C\sigma_N}N + \sigma_N\int_0^T \sum_{i=K+1}^{N-K} f(\hat\eta_i)\big(|\nabla\alpha_{i-1}^N\nabla\bar\psi_i| + |\nabla\psi_i\nabla\alpha_i^N|\big)dt.
  \end{aligned}
\end{equation}
Finally, the definition of $\alpha_N$ yields that
\begin{align}
  \sum_{i=K+1}^{N-K} f(\hat\eta_i)\big|\nabla\alpha_{i-1}^N\nabla\bar\psi_i\big| \le \frac{|f|_\infty\|\partial_x\psi\|_{L^\infty}}N \sum_{i=K+1}^{N-K} \big|\nabla\alpha_{i-1}^N\big| \le \frac CN.
\end{align}
As the same bound holds for $\nabla\psi_i\nabla\alpha_i^N$,
\begin{align}
\label{eq:s-bd7}
  \sigma_N\int_0^T \sum_{i=K+1}^{N-K} f(\hat\eta_i)\big(|\nabla\alpha_{i-1}^N\nabla\bar\psi_i| + |\nabla\psi_i\nabla\alpha_i^N|\big)dt \le \frac{C\sigma_N}N.
\end{align}
The desired conclusion then follows by putting \eqref{eq:s-bd5}--\eqref{eq:s-bd7} together.
\end{proof}

We are left with the last term $\mathcal B_N(\psi,\alpha_N)$ in \eqref{eq:bd-ent-prod-decom}.
The next lemma shows that $\mathcal B_N(\psi,\alpha_N)$ cancels exactly the divergent term in Lemma \ref{lem:esti-s-bd}.

\begin{lem}
\label{lem:esti-b}
As $N\to\infty$, $\mathcal B_N(\psi,\alpha_N)$ satisfies that
\begin{equation}
  \begin{aligned}
    \lim_{N\to\infty} \bP_N \bigg\{&\mathcal B_N(\psi,\alpha_N) - \sigma_N\int_0^T \sum_{i=K+1}^{N-K} \nabla \left( \psi_if(\hat\eta_i) \right) \nabla\alpha_i^Ndt\\
    &\le \int_0^T f(\rho_-)\psi(\cdot,0)dt + \int_0^T f(\rho_+)\psi(\cdot,1)dt\bigg\} = 1.
  \end{aligned}
\end{equation}
\end{lem}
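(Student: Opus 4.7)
The plan is to perform a discrete summation by parts on $\sigma_N\sum_i \nabla(\psi_i f(\hat\eta_i))\nabla\alpha_i^N$, which extracts two boundary pieces at $i=K+1$ and $i=N-K+1$ together with a bulk term $\sigma_N\sum_i \psi_i f(\hat\eta_i)\Delta\alpha_i^N$. Recombining this bulk term with $\mathcal{B}_N(\psi,\alpha_N)=\int_0^T\sum_i q(\hat\eta_i)\psi_i\nabla\alpha_i^N\,dt$ will yield a pointwise nonpositive integrand thanks to the entropy bound $|q|\le f$ from \eqref{eq:esti-ent}; the surviving boundary terms then produce the desired right-hand side via the boundary one-block estimate (Proposition~\ref{prop:bd-one-block}) and the continuity of $\psi$.

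For the bulk cancellation I would exploit the exact structure of $\alpha_N$ in \eqref{eq:auxiliary}. Setting $c=\sigma_N/(\sigma_N+1)$, the left piece $i\le N\delta_N$ satisfies the exact recursion $1-\alpha_i^N=c(1-\alpha_{i-1}^N)$ together with $\nabla\alpha_i^N=(1-\alpha_i^N)/(\sigma_N+1)$ and $\Delta\alpha_i^N=-(1-\alpha_{i-1}^N)/(\sigma_N+1)^2$; symmetric identities with $d=(\sigma_N-1)/\sigma_N$ hold on the right piece. Substituting, the combined bulk integrand at site $i$ reduces exactly to
\begin{align*}
  \frac{\sigma_N\psi_i(1-\alpha_{i-1}^N)}{(\sigma_N+1)^2}\bigl[q(\hat\eta_i)-f(\hat\eta_i)\bigr]\ \le\ 0\quad(\text{left}),\qquad
  -\frac{\psi_i(1-\alpha_{i+1}^N)}{\sigma_N}\bigl[q(\hat\eta_i)+f(\hat\eta_i)\bigr]\ \le\ 0\quad(\text{right}),
\end{align*}
both nonpositive by \eqref{eq:esti-ent}, $\psi\ge0$, and $\alpha_i^N\in[0,1]$. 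The single index straddling $\delta_N$ contributes negligibly since $1-\alpha_i^N=O(e^{-cN/\sigma_N})$ there. Meanwhile, the boundary terms from summation by parts, together with the $i=K+1$ summand of $\mathcal{B}_N$, reduce after using $K\ll\sigma_N$ and $\alpha_{K+1}^N,\alpha_{N-K+1}^N=O(K/\sigma_N)$ to $\psi_{K+1}f(\hat\eta_{K+1})+\psi_{N-K+1}f(\hat\eta_{N-K+1})+o(1)$. Continuity of $\psi$ gives $\psi_{K+1}\to\psi(\cdot,0)$ and $\psi_{N-K+1}\to\psi(\cdot,1)$ uniformly, while Proposition~\ref{prop:bd-one-block} yields $\bE_N\int_0^T|f(\hat\eta_{K+1})-f(\rho_-)|\,dt\to 0$ and the analogous bound at the right boundary, closing the argument in probability.

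The principal obstacle is that naive norm bounds on $\mathcal{B}_N$ and on $\sigma_N\sum\nabla g_i\nabla\alpha_i^N$ (with $g_i=\psi_i f(\hat\eta_i)$) are individually $O(1)$, so the nonpositivity of their combination requires the cancellation above to hold exactly, not approximately. This is precisely why $\alpha_N$ must be chosen in \eqref{eq:auxiliary} so that the recursions $1-\alpha_i^N=c(1-\alpha_{i-1}^N)$ and $1-\alpha_{i+1}^N=d(1-\alpha_i^N)$ become identities: it forces all subleading corrections to cancel rather than accumulate against the $\ell^1$ weight $\sum(1-\alpha_i^N)=O(\sigma_N)$.
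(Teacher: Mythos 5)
Your argument is correct and matches the paper's proof in all essentials: both hinge on the exact geometric recursion built into $\alpha_N$ (so that $\nabla\alpha_i^N=\pm\,\sigma_N^{-1}(1-\alpha_{i+1}^N)$ on the two pieces), the entropy bound $|q|\le f$ from \eqref{eq:esti-ent}, and Proposition \ref{prop:bd-one-block} for the surviving boundary terms at $i=K+1$ and $i=N-K+1$. The only difference is organizational: the paper applies $|q|\le f$ first and then sums $f(\hat\eta_i)\psi_i|\nabla\alpha_i^N|$ by parts to recover the divergent term, whereas you sum the divergent term by parts and check pointwise nonpositivity of the recombined bulk integrand --- the two computations are transposes of one another.
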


The following boundary block estimate is necessary for Lemma \ref{lem:esti-b}.
It is proved similarly to Proposition \ref{prop:one-block} and \ref{prop:h1}.
We postpone it to Section \ref{subsec:block}.

\begin{prop}[Boundary one-block estimates]
\label{prop:bd-one-block}
With some constant $C$,
\begin{align}
  \bE_N \left[ \int_0^T \left( \big|\hat\eta_K-\rho_-(t)\big|^2 + \big|\hat\eta_{N-K}-\rho_+(t)\big|^2 \right) dt \right] \le C \left( \frac K{\sigma_N} + \frac1{\wts_N} + \frac1K \right),
\end{align}
where $\rho_\pm=\rho_\pm(t)$ are defined in \eqref{eq:cl2}.
\end{prop}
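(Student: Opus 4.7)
By symmetry I treat only the term $\bE_N\int_0^T (\hat\eta_K-\rho_-(t))^2\,dt$; the right boundary estimate is analogous. The strategy parallels Propositions \ref{prop:one-block} and \ref{prop:h1}, with the extra feature that the acceleration $\wts_N$ of the reservoir is crucial to pin $\eta_1$ near the Bernoulli density $\rho_-(t)$. The three terms in the bound reflect three independent mechanisms: $1/\wts_N$ is the relaxation rate of $\eta_1$ under $\wts_N L_{-,t}$; $K/\sigma_N$ is the diffusive cost of propagating the boundary value across $K$ sites via $\sigma_N\Ls$; and $1/K$ is the intrinsic spatial-averaging fluctuation, analogous to Proposition \ref{prop:one-block}.

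My plan is as follows. First, I would choose a reference Bernoulli product measure $\nu_*^t$ with site densities given by a smooth profile interpolating $\rho_-(t)$ at $x=0$ and $\rho_+(t)$ at $x=1$, and let $f_t$ denote the density of the process distribution with respect to $\nu_*^t$. Yau's relative entropy inequality applied to $NL_{N,t}$, after absorbing the $O(N)$ contribution of the asymmetric drift $Np\Lta$ through integration against the smoothness of $\nu_*^t$, yields
\begin{equation*}
\int_0^T \bE_N\bigl[\sigma_N \mathfrak D_{\mathrm{SS}}(\sqrt{f_t}) + \wts_N (\mathfrak D_{-,t}+\mathfrak D_{+,t})(\sqrt{f_t})\bigr]\,dt \le C,
\end{equation*}
where the $\mathfrak D$'s denote the standard normalized Dirichlet forms of $\Ls$ and $L_{\pm,t}$. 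Second, I would use the decomposition
\begin{equation*}
\hat\eta_K - \rho_-(t) = \bigl(\eta_1(t)-\rho_-(t)\bigr) + \sum_{k=1}^{2K-2} a_{k,K}\,\nabla\eta_k(t),
\end{equation*}
where $a_{k,K}=\sum_{m=k+1}^{2K-1}w_{K-m}\in[0,1]$ and $\sum_k a_{k,K}\le CK$, which follows from the definition of $\hat\eta_K$ and the telescoping identity $\eta_m-\eta_1=\sum_{k=1}^{m-1}\nabla\eta_k$. Third, the boundary Dirichlet form $\mathfrak D_{-,t}$ controls the variance of $\eta_1-\rho_-(t)$ under $\mu_t$, yielding $\bE_N\int_0^T(\eta_1-\rho_-(t))^2\,dt\le C/\wts_N$, while the symmetric Dirichlet form bound translates into a control of the gradients $(\nabla\eta_k)^2$ which, combined with the \CS inequality on the telescope weighted by $a_{k,K}$, produces the $K/\sigma_N$ contribution; the residual $1/K$ term comes from the spatial-averaging error analogous to Proposition \ref{prop:one-block}. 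Combining the three pieces via $(a+b)^2\le 2a^2+2b^2$ yields the claimed bound.

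The main obstacle I anticipate is the lack of time-regularity of $\rho_\pm(\cdot)$: since they are only $L^\infty$, the reference measure $\nu_*^t$ cannot be differentiable in time, so the time-derivative term in Yau's inequality does not make sense pointwise. I would handle this by working on a fine partition of $[0,T]$ and estimating the relative-entropy jumps across partition points, as in \cite{DMOX21}, or equivalently by mollifying $\rho_\pm$ in time and controlling the $L^1$ error using the strong relaxation rate $\wts_N$. This is the principal technical point of the proof and the main difference compared to the interior estimates of Propositions \ref{prop:one-block} and \ref{prop:h1}.
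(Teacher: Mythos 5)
Your first step (establishing the time-integrated Dirichlet form bounds of order $\sigma_N^{-1}$ and $\wts_N^{-1}$) is essentially Proposition \ref{prop:dir} of the paper, though the paper uses the \emph{homogeneous} reference measures $\nu_{\pm,t}$ — for which $\Lta$ is invariant, so no drift term needs absorbing — and handles the mismatch at the opposite boundary via the a priori current bound \eqref{eq:priori-bd}, rather than a smooth interpolating profile. That part of your plan is workable. The genuine gap is in your second and third steps. The claim that the boundary Dirichlet form yields $\bE_N\int_0^T(\eta_1-\rho_-(t))^2\,dt\le C/\wts_N$ is false: since $\eta_1\in\{0,1\}$, one has the identity $(\eta_1-\rho_-)^2=(1-2\rho_-)(\eta_1-\rho_-)+\rho_-(1-\rho_-)$, so even if $\mu_{N,t}$ were \emph{exactly} the reversible Bernoulli measure (vanishing boundary Dirichlet form), this expectation equals $\rho_-(1-\rho_-)=O(1)$. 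What the reservoir strength controls is the \emph{first} moment $\bE_N[\eta_1]-\rho_-$ (cf.\ \eqref{eq:priori-bd}), not the second. The same problem kills the telescoping sum: each $(\nabla\eta_k)^2\in\{0,1\}$ has expectation of order $\rho(1-\rho)$ under any product measure, so $\bE_N[(\nabla\eta_k)^2]$ is $O(1)$ and is not controlled by the symmetric Dirichlet form; a \CS bound on $\sum_k a_{k,K}\nabla\eta_k$ then gives $O(K^2)$, not $O(K/\sigma_N)$. In short, decomposing $\hat\eta_K-\rho_-$ site by site destroys exactly the cancellation (CLT-scale concentration of the block average) that produces the $1/K$ term.

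The paper's proof keeps the block average intact: it applies the relative entropy inequality on the box $\{\eta_1,\dots,\eta_{2K-1}\}$ against the product measure $\nu_{\rho_-(t)}$, bounds the relative entropy by $CK^2\fD_{\exc,N}(t)+CK\fD_{-,N}(t)$ using the open-boundary logarithmic Sobolev inequality (Proposition \ref{prop:logsob-bound}), and uses the exponential moment bound $\log\int e^{aK|\hat\eta_K-\rho_-(t)|^2}d\nu_{\rho_-(t)}\le C$ — i.e.\ the concentration of the $2K$-site average around its mean — to produce the $1/K$ term. Dividing by $aK$ and integrating in time with Proposition \ref{prop:dir} gives exactly $C(K/\sigma_N+1/\wts_N+1/K)$. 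If you want to repair your argument, you must replace the pointwise telescoping by such an entropy--log-Sobolev comparison of the \emph{joint} law of the boundary block with the Bernoulli measure; no purely site-wise variance bound can work.
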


\begin{proof}[Proof of Lemma \ref{lem:esti-b}]
First notice from \eqref{eq:esti-ent} that
\begin{align}
\label{eq:b1}
  \mathcal B_N(\psi,\alpha_N) = \int_0^T \sum_{i=K+1}^{N-K} q(\hat\eta_i)\psi_i\nabla\alpha_i^Ndt \le \int_0^T \sum_{i=K+1}^{N-K} f(\hat\eta_i)\psi_i\big|\nabla\alpha_i^N\big|dt.
\end{align}
Recall that $\delta_N=1/2+O(\sigma_N^{-1})$ in \eqref{eq:auxiliary}.
For large $N$, choose the integer $j_N:=[N\delta_N+1/2]$, then $j_N$ satisfies that
\begin{align}
  \delta_N \in \left[ \frac{j_N}N-\frac1{2N}, \frac{j_N}N+\frac1{2N} \right).
\end{align}
By the definition of $\alpha_N$ we can rewrite \eqref{eq:b1} as\footnote{The term with $i=j_N$ is omitted since it is vanishing in the limit $N\to\infty$.},
\begin{align}
\label{eq:b2}
  \mathcal B_N(\psi,\alpha_N) \le \int_0^T \sum_{i=K+1}^{j_N-1} f(\hat\eta_i)\psi_i\nabla\alpha_i^Ndt - \int_0^T \sum_{i=j_N+1}^{N-K} f(\hat\eta_i)\psi_i\nabla\alpha_i^Ndt.
\end{align}
We calculate the two terms in \eqref{eq:b2} respectively.
For the first one,
\begin{align*}
  &\sum_{i=K+1}^{j_N-1} f(\hat\eta_i)\psi_i\nabla\alpha_i^N = \sum_{i=K+1}^{j_N-1} f(\hat\eta_i)\psi_i \nabla^*\big(1-\alpha_{i+1}^N\big) \\
  =\;&f(\hat\eta_{K+1})\psi_{K+1}\big(1-\alpha_{K+1}^N\big) - f(\hat\eta_{j_N})\psi_{j_N}\big(1-\alpha_{j_N}^N\big) + \sum_{i=K+1}^{j_N-1} \nabla\big(f(\hat\eta_i)\psi_i\big)(1-\alpha_{i+1}^N) \\
  \le\;&f(\hat\eta_{K+1})\psi_{K+1} + \sum_{i=K+1}^{j_N-1} \nabla\big(f(\hat\eta_i)\psi_i\big)\big(1-\alpha_{i+1}^N\big). 
\end{align*}
The second one follows similarly and we obtain that 
\begin{align*}
  \sum_{i=j_N+1}^{N-K} f(\hat\eta_i)\psi_i\nabla\alpha_i^N \ge -f(\hat\eta_{N-K+1})\psi_{N-K+1} + \sum_{i=j_N+1}^{N-K} \nabla\big(f(\hat\eta_i)\psi_i\big)\big(1-\alpha_{i+1}^N\big). 
\end{align*}
From these estimates and \eqref{eq:b2}, we obtain that
\begin{equation}
\label{eq:b3}
  \begin{aligned}
    \mathcal B_N(\psi,\alpha_N) \le \int_0^T \big[f(\hat\eta_{K+1})\psi_{K+1} + f(\hat\eta_{N-K+1})\psi_{N-K+1}\big]dt \\
    + \int_0^T \sum_{i=K+1}^{N-K} \sgn(j_N-i)\nabla\big(f(\hat\eta_i)\psi_i\big)\big(1-\alpha_{i+1}^N\big)dt. 
  \end{aligned}
\end{equation}
The definition of $\alpha_N$ assures that 
\begin{align}
  \nabla\alpha_i^N = 
  \begin{cases}
    \sigma_N^{-1}(1-\alpha_{i+1}^N), &1 \le i \le j_N-1, \\
    -\sigma_N^{-1}(1-\alpha_{i+1}^N), &j_N+1 \le i \le N-1. 
  \end{cases}
\end{align}
Therefore, \eqref{eq:b3} can be rewritten as
\begin{equation}
  \begin{aligned}
    \mathcal B_N(\psi,\alpha_N) - \sigma_N\int_0^T \sum_{i=K+1}^{N-K} \nabla\big(f(\hat\eta_{i-1})\psi_{i-1}\big)\nabla\alpha_i^Ndt\\
    \le \int_0^T \big[f(\hat\eta_{K+1})\psi_{K+1} + f(\hat\eta_{N-K+1})\psi_{N-K+1}\big]dt.
  \end{aligned}
\end{equation}
Sending $N\to\infty$ and applying Proposition \ref{prop:bd-one-block}, we conclude the result. 
\end{proof}

\section{Estimates}
In this section we establish and prove the technical results used in this article.

\subsection{A priori estimate on boundary currents}

Recall that $\eta(t)$ is the process generated by $NL_{N,t}$ for the generator $L_{N,t}$ in \eqref{eq:gen}.
Define the microscopic currents $j_{i,i+1}$ associated to $L_{N,t}$ through the conservation law $L_{N,t} [\eta_i] = j_{i-1,i}-j_{i,i+1}$.
They are equal to
\begin{align}
\label{eq:current0}
  j_{i,i+1} =
  \begin{cases}
    \wts_N[\alpha(t)-(\alpha(t)+\gamma(t))\eta_1], &i=0,\\
    p\eta_i(1-\eta_{i+1})+\sigma_N(\eta_i-\eta_{i+1}), &1 \le i \le N-1,\\
    \wts_N[(\beta(t)+\delta(t))\eta_N-\delta(t)], &i=N.
  \end{cases}
\end{align}
Following the argument in \cite[Section 2]{DeMasiO20}, for $i=1$, ..., $N-1$ define the counting processes associated to the process $\{\eta(t)\}_{t\ge0}$ by
\begin{equation}
  \begin{aligned}
    h_+(i,t) &:= \text{number of jumps}\ i \to i+1\ \text{in}\ [0,t],\\
    h_-(i,t) &:= \text{number of jumps}\ i+1 \to i\ \text{in}\ [0,t].
  \end{aligned}
\end{equation}
These definitions extend to the boundaries $i=0$ and $i=N$ as
\begin{equation}
  \begin{aligned}
    h_+(0,t) &:= \text{number of particles created at}\ 1\ \text{in}\ [0,t],\\
    h_-(0,t) &:= \text{number of particles annihilated at}\ 1\ \text{in}\ [0,t],\\
    h_+(N,t) &:= \text{number of particles annihilated at}\ N\ \text{in}\ [0,t],\\
    h_-(N,t) &:= \text{number of particles created at}\ N\ \text{in}\ [0,t].
  \end{aligned}
\end{equation}
With $h(i,t):=h_+(i,t)-h_-(i,t)$, the conservation law is microscopically given by
\begin{align}
\label{eq:cl0}
  \eta_i(t)-\eta_i(0)=h(i-1,t)-h(i,t), \quad \forall\,i=1,\ldots,N.
\end{align}
Furthermore, for $i=0$, ..., $N$, there is a martingale $M_i(t)$ such that
\begin{align}
  h(i,t) = N\int_0^t j_{i,i+1}(s)ds + M_i(t).
\end{align}
As $\eta_i(t)\in\{0,1\}$, \eqref{eq:cl0} yields that $|h(i,t)-h(i',t)|\le|i-i'|$.
Therefore,
\begin{align}
  \left| \bE_N \left[ \int_0^t j_{i,i+1}(s)ds \right] - \bE_N \left[ \int_0^t j_{i',i'+1}(s)ds \right] \right| \le \frac{|i-i'|}N \le 1
\end{align}
for all $i$, $i'=0$, ..., $N$.
In particular, for the fixed time $T>0$,
\begin{align}
\label{eq:current1}
  \left| \bE_N \left[ \int_0^T j_{N,N+1}(t)dt \right] - \bE_N \left[ \int_0^T \frac1{N-1}\sum_{i=1}^{N-1} j_{i,i+1}(t)dt \right] \right| \le 1.
\end{align}
Since $j_{i,i+1}=p\eta_i(1-\eta_{i+1})+\sigma_N(\eta_i-\eta_{i+1})$,
\begin{equation}
  \begin{aligned}
    \left| \frac1{N-1}\sum_{i=1}^{N-1} j_{i,i+1}(t) \right|
    &= \left| \frac{\sigma_N}{N-1}(\eta_1-\eta_N) + \frac1{N-1}\sum_{i=1}^{N-1} p\eta_i(1-\eta_{i+1})\right|\\
    &\le \sigma_N(N-1)^{-1}+p = o(1)+p, \quad N\to\infty.
  \end{aligned}
\end{equation}
Hence, from \eqref{eq:current1} we see that
\begin{align}
  \limsup_{N\to\infty} \left| \bE_N \left[ \int_0^T j_{N,N+1}(t)dt \right] \right| \le 1+p.
\end{align}
Recalling the definitions of $j_{N,N+1}$ in \eqref{eq:current0}, we obtain a priori bound:
\begin{align}
\label{eq:priori-bd}
  \limsup_{N\to\infty} \left| \bE_N \left[ \int_0^T \big(\beta(t)+\delta(t)\big)\big(\eta_N-\rho_+(t)\big)dt \right] \right| \le \frac C{\wts_N},
\end{align}
with $\rho_+$ given by \eqref{eq:cl2}.
It is easy to obtain the same bound for $\eta_1-\rho_-(t)$.

\subsection{Dirichlet forms}
\label{subsec:dir}

Recall the process $\eta(\cdot)\in\Omega_N=\{0,1\}^N$ generated by $NL_{N,t}$.
Denote by $\mu_{N,t}$ the distribution of $\eta(t)$.
Define the Dirichlet form
\begin{align}
\label{eq:dir-exc}
  \fD_{\exc,N}(t) := \frac12\sum_{\eta\in\Omega_N} \sum_{i=1}^{N-1} 
    \left(\sqrt{\mu_{N,t}(\eta^{i,i+1})} - \sqrt{\mu_{N,t}(\eta)}\right)^2. 
\end{align}
Denote by $\nu_{\pm,t}$ the Bernoulli measure with densities $\rho_\pm(t)$ given in \eqref{eq:cl2}:
\begin{equation}
\label{eq:bbern}
  \nu_{\pm,t}(\eta) = \prod_{i=1}^N\,\rho_\pm(t)^{\eta_i}\big[1-\rho_\pm(t)\big]^{1-\eta_i}, \quad \forall\,\eta \in \Omega_N. 
\end{equation}
The boundary Dirichlet forms are defined as
\begin{equation}
\label{eq:dir-bd}
  \begin{aligned}
    &\fD_{-,N}(t) := \frac12\sum_{\eta\in\Omega_N} \rho_-^{1-\eta_1}(1-\rho_-)^{\eta_1}
    \left(\sqrt{f_{N,t}^-(\eta^1)} - \sqrt{f_{N,t}^-(\eta)}\right)^2\nu_{-,t}(\eta), \\
    &\fD_{+,N}(t) := \frac12\sum_{\eta\in\Omega_N} \rho_+^{1-\eta_N}(1-\rho_+)^{\eta_N}
    \left(\sqrt{f_{N,t}^+(\eta^N)} - \sqrt{f_{N,t}^+(\eta)}\right)^2\nu_{+,t}(\eta), 
  \end{aligned}
\end{equation}
where $f^\pm_{N,t}:=\mu_{N,t}/\nu_{\pm,t}$ is the probability density function.

\begin{prop}
\label{prop:dir}
There exists $C$ independent of $N$ such that
\begin{align}
\label{eq:dir}
  \int_0^T \fD_{\exc,N}(t)dt \le \frac C{\sigma_N}, \quad \int_0^T \big[\fD_{-,N}(t)+\fD_{+,N}(t)\big]dt \le \frac C{\wts_N}.
\end{align}
\end{prop}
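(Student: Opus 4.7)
The proof follows the relative entropy method. The key is to choose a reference measure with respect to which the boundary generators are reversible. Define
\begin{equation*}
  \bar\nu_t := \prod_{i=1}^N \mathrm{Ber}(\bar\rho_i(t)), \qquad \bar\rho_i(t) := \rho_-(t) + \frac{i-1}{N-1}\bigl(\rho_+(t)-\rho_-(t)\bigr).
\end{equation*}
By \eqref{eq:assp-bd}, $\bar\rho_i(t) \in [c,1-c]$ uniformly in $(i,t)$ for some $c>0$. The site-$1$ and site-$N$ marginals of $\bar\nu_t$ are $\mathrm{Ber}(\rho_-(t))$ and $\mathrm{Ber}(\rho_+(t))$ respectively, so $L_{-,t}$ and $L_{+,t}$ are exactly reversible with respect to $\bar\nu_t$. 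Since $\rho_\pm$ are only $L^\infty(\bR_+)$, I first mollify in time to $\rho_\pm^\delta\in\cC^1$ and work with the smooth reference $\bar\nu_t^\delta$, passing $\delta\to 0$ at the end; dominated convergence applies because $\fD_{\pm,N}$ depends on $\rho_\pm$ through uniformly bounded multiplicative factors.

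Apply Yau's entropy identity: with $f_t := d\mu_{N,t}/d\bar\nu_t^\delta$,
\begin{equation*}
  \frac{d}{dt}H(\mu_{N,t}\,|\,\bar\nu_t^\delta) = \int NL_{N,t}^\ast f_t \log f_t\,d\bar\nu_t^\delta + \int \partial_t(-\log \bar\nu_t^\delta)\,d\mu_{N,t}.
\end{equation*}
Decompose $L_{N,t} = p\Lta + \sigma_N\Ls + \wts_N(L_{-,t}+L_{+,t})$ and estimate the resulting four contributions. The reversibility of $L_{\pm,t}$ together with the standard inequality $(a-b)(\log a-\log b)\ge 4(\sqrt a-\sqrt b)^2$ yields the coercive terms $-c\wts_N N \fD_{\pm,N}(t)$. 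The symmetric bulk $\sigma_N\Ls$ is \emph{almost} reversible: since $|\bar\rho_{i+1}-\bar\rho_i| = O(1/N)$, the same inequality gives $-c\sigma_N N \fD_{\exc,N}(t) + O(\sigma_N)$, the error coming from the local density mismatch summed over $N-1$ edges. The asymmetric bulk $pN\Lta$ is handled by a Young-type inequality, bounded by $\tfrac12 c\sigma_N N\fD_{\exc,N}(t) + O(N)$, which absorbs half of the coercive symmetric contribution. Finally the time-derivative term is of order $O(N)$ in the integrated form, using the uniform bound on $\rho_\pm^\delta$ away from $\{0,1\}$.

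Summing and integrating over $[0,T]$, together with $H(\mu_{N,0}\,|\,\bar\nu_0^\delta) \le N\log(1/c)=O(N)$, produces
\begin{equation*}
  \tfrac12 c\sigma_N N \int_0^T \fD_{\exc,N}(t)\,dt + c\wts_N N \int_0^T \bigl[\fD_{-,N}(t)+\fD_{+,N}(t)\bigr]\,dt \le CN.
\end{equation*}
Dividing through by $\sigma_N N$ and $\wts_N N$ respectively, then letting $\delta\to 0$, yields \eqref{eq:dir}. The main obstacle will be controlling the $\partial_t\bar\nu_t^\delta$ term when $\rho_\pm$ are merely $L^\infty$: a pointwise estimate diverges as $\delta\to 0$. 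The resolution is to keep $H(\mu_{N,t}\,|\,\bar\nu_t^\delta)$ intact on the left-hand side, so that in the integrated form the problematic time-derivative contribution is swallowed by the endpoint values $H(\mu_{N,T}\,|\,\bar\nu_T^\delta)-H(\mu_{N,0}\,|\,\bar\nu_0^\delta)$, and to derive the final bound in a $\delta$-independent form before passing to the limit. An equivalent alternative is to use the time-\emph{independent} reference $\mathrm{Ber}(\rho^\ast)^{\otimes N}$, at the cost of producing non-reversibility corrections for $L_{\pm,t}$ that must be absorbed into the boundary Dirichlet forms via Young's inequality, again exploiting $\rho_\pm \in [c,1-c]$.
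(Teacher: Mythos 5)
Your overall architecture (relative entropy against a product Bernoulli reference, Yau's identity, and the inequality $x(\log y-\log x)\le 2\sqrt x(\sqrt y-\sqrt x)$ to extract the Dirichlet forms) is the same as the paper's, but your choice of reference is genuinely different: you interpolate the profile so that \emph{both} $L_{\pm,t}$ are reversible, whereas the paper works with the homogeneous measures $\nu_{\mp,t}$ of \eqref{eq:bbern}, for which the bulk generators leave the reference exactly invariant and only the \emph{far} boundary is non-reversible; that non-reversible term, carrying the dangerous factor $\wts_N$, is then killed by the a priori boundary-current bound \eqref{eq:priori-bd} obtained from the counting-process/conservation-law argument. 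Your route trades that current estimate for bulk non-invariance errors, which is a legitimate and arguably cleaner alternative \emph{provided} those bulk errors are really $O(N)$ after integration.

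That is where the gap is. For the totally asymmetric part you claim ``a Young-type inequality, bounded by $\tfrac12 c\sigma_N N\fD_{\exc,N}(t)+O(N)$.'' The Cauchy--Schwarz/Young absorption of $pN\int\sqrt f\,\Lta\sqrt f\,d\bar\nu_t$ into $\ve\sigma_N N\fD_{\exc,N}$ costs $O(N^2/\sigma_N)$, and since $\sigma_N=o(N)$ by \eqref{eq:assp} this is $\gg N$; it would only give $\int_0^T\fD_{\exc,N}\,dt\le CN/\sigma_N^{2}$, which is strictly weaker than the stated $C/\sigma_N$ and breaks the downstream one-block and $H^1$ estimates (e.g.\ the bound \eqref{eq:a2} would then diverge like $KN/\sigma_N^2$ under \eqref{eq:assp-k}). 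The correct argument, and the one the paper uses implicitly via the invariance of $\nu_{\mp,t}$ under $\Lta$, is that after dropping the negative square one is left with $pN\int\Lta f\,d\bar\nu_t$, which telescopes to a boundary term $\int f(\eta_N-\eta_1)\,d\bar\nu_t$ plus $O(1/N)$ per edge from the density mismatch, hence $O(N)$ in total with no absorption needed. The same telescoping is what makes your symmetric-bulk error $O(\sigma_N)$ rather than $O(N\sigma_N)$: the first-order mismatch terms $\propto\bE[\eta_i-\eta_{i+1}]$ must be summed by parts, which your phrase ``summed over $N-1$ edges'' glosses over. Finally, your treatment of $\partial_t\log\bar\nu_t^\delta$ for merely $L^\infty$ data is not resolved by ``swallowing it into the endpoint entropies'': the term $\int_0^T\bE_{\mu_{N,t}}[\partial_t\log\bar\nu^\delta_t]\,dt$ is not a total derivative because $\mu_{N,t}$ varies in $t$, so something quantitative (e.g.\ a bound in terms of $\tfrac1N\sum_i(\bE\eta_i-\bar\rho_i)$ combined with the current estimate, or a density/approximation argument) is still required there.
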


\begin{proof}
Define the relative entropy $H_{\pm,N}(t)$ by
\begin{align}
  H_{\pm,N}(t) := \sum_{\eta\in\Omega_N} f_{N,t}^\pm(\eta)\log f_{N,t}^\pm(\eta)\nu_{\pm,t}(\eta).
\end{align}
Standard manipulation with Kolmogorov equation gives that
\begin{equation}
\label{eq:h1}
  \begin{aligned}
    \frac1N\frac d{dt}H_{-,N}(t)
    &= \sum_{\eta\in\Omega_N} f_{N,t}^-L_{N,t} \big[\log f_{N,t}^-\big]\nu_{-,t} - \frac1N\sum_{\eta\in\Omega_N} f_{N,t}^-\frac d{dt}\nu_{-,t}\\
    &\le \sum_{\eta\in\Omega_N} f_{N,t}^-L_{N,t} \big[\log f_{N,t}^-\big]\nu_{-,t} + C.
  \end{aligned}
\end{equation}

We divide the summation in the right-hand side of \eqref{eq:h1} into two parts:
\begin{align}
  \Gamma_{N,t}^{-,(1)} &:= \sum_{\eta\in\Omega_N} f_{N,t}^- \big(L_{N,t} - \wts_NL_{+,t}\big)\big[\log f_{N,t}^-\big]\nu_{-,t},\\
  \Gamma_{N,t}^{-,(2)} &:= \wts_N\sum_{\eta\in\Omega_N} f_{N,t}^-L_{+,t} \big[\log f_{N,t}^-\big]\nu_{-,t}.
\end{align}
Exploiting the inequality $x(\log y-\log x)\le2\sqrt x(\sqrt y-\sqrt x)$ for $x$, $y>0$,
\begin{equation}
  \begin{aligned}
    \Gamma_{N,t}^{-,(1)}
    &= \sum_{\eta\in\Omega_N} f_{N,t}^- \big(p\Lta+\sigma_N\Ls+\wts_NL_{-,t}\big)\big[\log f_{N,t}^-\big]\nu_{-,t}\\
    &\le 2\sum_{\eta\in\Omega_N} g_{N,t}^- \big(p\Lta+\sigma_N\Ls+\wts_NL_{-,t}\big)\big[g_{N,t}^-\big]\nu_{-,t},
  \end{aligned}
\end{equation}
where we denote $g_{N,t}^-:=(f_{N,t}^-)^{1/2}$.
Since the measure $\nu_{-,t}$ is invariant under $\Lta$, $\Ls$ as well as $L_{-,t}$, direct computation shows that
\begin{align}
  \Gamma_{N,t}^{-,(1)} \le -\wts_N\big(\alpha(t)+\gamma(t)\big)\fD_{-,N}(t) - \sigma_N\fD_{\exc,N}(t) + C.
\end{align}
Meanwhile, since $f_{N,t}^-\nu_{-,t}=f_{N,t}^+\nu_{+,t}$,
\begin{equation}
  \begin{aligned}
    \Gamma_{N,t}^{-,(2)} = \wts_N\sum_{\eta\in\Omega_N} f_{N,t}^-L_{+,t} \left[ \log f_{N,t}^++\log \left( \frac{\nu_{+,t}}{\nu_{-,t}} \right) \right] \nu_{-,t}\\
    \le -\wts_N\big(\beta(t)+\delta(t)\big)\fD_{+,N}(t) + \wts_N\sum_{\eta\in\Omega_N} f_{N,t}^-L_{+,t} \left[ \log \left( \frac{\nu_{+,t}}{\nu_{-,t}} \right) \right] \nu_{-,t}.
  \end{aligned}
\end{equation}
By the definition of $\nu_{\pm,t}$ and $L_{-,t}$,
\begin{align}
  L_{-,t} \left[ \log \left( \frac{\nu_{+,t}}{\nu_{-,t}} \right) \right] = \mathfrak R(t) \big(\beta(t)+\delta(t)\big) \big(\eta_N-\rho_+(t)\big),
\end{align}
where
\begin{align}
  \mathfrak R(t) := \log \left[ \frac{\rho_-(t)}{1-\rho_-(t)} \right] - \log \left[ \frac{\rho_+(t)}{1-\rho_+(t)} \right].
\end{align}
Applying the priori estimate \eqref{eq:priori-bd},
\begin{align}
  \int_0^T \Gamma_{N,t}^{-,(2)}dt \le -\wts_N\int_0^T \big(\beta(t)+\delta(t)\big)\fD_{+,N}(t)dt + C.
\end{align}
Integrating \eqref{eq:h1} over $[0,T]$ and using the results we proved above,
\begin{equation}
  \begin{aligned}
    \frac{H_{-,N}(T)-H_{-,N}(0)}N \le C - \sigma_N\int_0^T \fD_{\exc,N}(t)dt\\
    -\,\wts_N\int_0^T \Big[\big(\alpha(t)+\gamma(t)\big)\fD_{-,N}(t) + \big(\beta(t)+\delta(t)\big)\fD_{+,N}(t)\Big] dt.
  \end{aligned}
\end{equation}
We can conclude \eqref{eq:dir} since $\alpha$, $\beta$, $\gamma$, $\delta$ are uniformly bounded from $0$.
\end{proof}

\subsection{Logarithmic Sobolev inequalities}
\label{subsec:log-sobol-ineq}

For $\rho \in (0,1)$, let $\nu_\rho$ be the product Bernoulli measure on $\Omega_k=\{0,1\}^k$ with homogeneous density $\rho$.
The micro canonical configuration space $\Omega_{k,\rho_*}$ is defined for $\rho_*=0$, $1/k$, ..., $(k-1)/k$, $1$ as
\begin{align}
\label{eq:mic-suf}
  \Omega_{k,\rho_*} := \left\{\eta=(\eta_1,\dots,\eta_k) \in \Omega_k~\bigg|~\frac1k\sum_{i=1}^k \eta_i = \rho_*\right\}.
\end{align}
Let $\tilde\nu^k(\eta|\rho_*)$ be the uniform measure on $\Omega_{k,\rho_*}$.
The log-Sobolev inequality for the simple exclusion (\cite{Yau97}) yields that 
there exists a universal constant $C_\LS$ such that
\begin{equation}
\label{eq:logsob}
  \begin{aligned}
    &\sum_{\eta\in\Omega_{k,\rho_*}} f(\eta)\log f(\eta)\nu^k(\eta|\rho_*)\\
    \le\;&\frac{C_\LS k^2}2 \sum_{\eta\in\Omega_{k,\rho_*}} \sum_{i=1}^{k-1} \left( \sqrt f (\eta^{i,i+1}) - \sqrt f (\eta) \right)^2 \nu^k(\eta|\rho_*).
  \end{aligned}
\end{equation}
for any $f\ge0$ such that $\sum_{\eta\in\Omega_{k,\rho_*}} f(\eta)\nu(\eta|\rho_*) = 1$.
In \cite[Proposition A.1]{DMOX22}, \eqref{eq:logsob} is extended to a log-Sobolev inequality associated to $\nu_\rho$.

\begin{prop}\label{prop:logsob-bound}
There exists a constant $C_\rho$, such that
\begin{equation}
\label{eq:logsob-bound}
  \begin{aligned}
    \sum_{\eta\in\Omega_k} f(\eta)\log f(\eta) \nu_\rho(\eta)
    \le \frac{C_\rho k^2}2 \sum_{\eta\in\Omega_k}
    \sum_{i=1}^{k-1} \left( \sqrt f (\eta^{i,i+1}) - \sqrt f (\eta) \right)^2 \nu_\rho(\eta)\\
    + \frac{C_\rho k}2 \sum_{\eta\in\Omega_k}
    \rho^{1-\eta_1}(1-\rho)^{\eta_1} \left( \sqrt f (\eta^1) - \sqrt f (\eta) \right)^2 \nu_\rho(\eta),
  \end{aligned}
\end{equation}
for any $f\ge0$ such that $\sum_{\eta\in\Omega_k} f(\eta)\nu_\rho(\eta) = 1$.
\end{prop}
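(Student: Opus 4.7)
The plan is to reduce \eqref{eq:logsob-bound} to two ingredients already at hand---the canonical log-Sobolev inequality \eqref{eq:logsob} on a fixed-density fiber $\Omega_{k,\rho_*}$, and a one-dimensional log-Sobolev inequality for the Binomial marginal---joined by the standard entropy decomposition with respect to the total particle number. This cleanly separates the ``within-fiber'' fluctuations, which will be handled by the bulk exclusion generator, from the ``across-fiber'' fluctuations, which will be handled by the single boundary flip at site $1$.

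First I set $N(\eta) := \sum_{i=1}^k \eta_i$, denote by $\pi_\rho$ the law of $N$ under $\nu_\rho$ (a Binomial$(k,\rho)$ distribution), and write $\bar f(j) := \bE_{\nu_\rho}[f\mid N=j]$. Since the restriction of $\nu_\rho$ to $\{N=j\}$ coincides with $\tilde\nu^k(\cdot\mid j/k)$, the tower property yields
\begin{equation*}
  \sum_\eta f\log f\,\nu_\rho(\eta) = \sum_{j=0}^k \pi_\rho(j)\bar f(j)\, H_j\!\left(\tfrac{f}{\bar f(j)}\right) + H(\bar f\pi_\rho\,|\,\pi_\rho),
\end{equation*}
where $H_j(g) := \sum_{\eta\in\Omega_{k,j/k}} g\log g\,\tilde\nu^k(\eta\mid j/k)$ and $H(\cdot|\cdot)$ denotes the usual relative entropy. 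I would then apply \eqref{eq:logsob} on each fiber to $g = f/\bar f(j)$: as the exchanges $\eta\mapsto\eta^{i,i+1}$ preserve $N$, one has $(\sqrt{g(\eta^{i,i+1})}-\sqrt{g(\eta)})^2 = \bar f(j)^{-1}(\sqrt{f(\eta^{i,i+1})}-\sqrt{f(\eta)})^2$, and the identity $\pi_\rho(j)\tilde\nu^k(\eta\mid j/k) = \nu_\rho(\eta)$ on each fiber produces, after summation over $j$, precisely the bulk term of \eqref{eq:logsob-bound} with constant $C_\LS k^2/2$.

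The remaining marginal entropy $H(\bar f\pi_\rho\mid\pi_\rho)$ must be dominated by the boundary Dirichlet form. Because $\pi_\rho$ is a projection of a product Bernoulli measure, it carries a log-Sobolev inequality with constant $C(\rho)$ uniform in $k$; I would then compare the associated birth-death Dirichlet form of $\sqrt{\bar f}$ to the microscopic form $\mathcal D_1(f) := \sum_\eta \rho^{1-\eta_1}(1-\rho)^{\eta_1}(\sqrt{f(\eta^1)}-\sqrt{f(\eta)})^2 \nu_\rho(\eta)$ appearing on the right of \eqref{eq:logsob-bound}. The key observation is that $\eta\mapsto\eta^1$ shifts $N$ by $\pm 1$ only when $\eta_1$ takes the appropriate value, which occurs with probability $(k-j)/k$ or $j/k$ conditionally on $\{N=j\}$; hence the chain induced on $\{0,\dots,k\}$ by the site-$1$ flip is the Ehrenfest chain slowed by a factor $k$. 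This slowdown is precisely the origin of the extra factor $k$ in front of $\mathcal D_1(f)$ in \eqref{eq:logsob-bound}.

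The main obstacle is this last comparison, since the square root does not commute with conditional expectation: one cannot pass directly from $\bE[f(\eta^1)-f(\eta)\mid N,\eta_1]$ to $\sqrt{\bar f(j\pm 1)}-\sqrt{\bar f(j)}$. A workable route is to bound $(\sqrt{\bar f(j+1)}-\sqrt{\bar f(j)})^2$ by $\bE[(\sqrt{f(\eta^1)}-\sqrt{f(\eta)})^2\mid N(\eta)=j,\eta_1=0]$ via a Jensen/Cauchy--Schwarz estimate on the conditional measure, and then average against $\pi_\rho$. The uniform lower bound \eqref{eq:assp-bd} on $\alpha,\beta,\gamma,\delta$ keeps $\rho$ away from $\{0,1\}$, which in turn keeps $C_\rho$ finite and concludes the argument.
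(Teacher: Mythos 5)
The paper itself does not prove this proposition --- it simply cites \cite[Appendix~A]{DMOX21} --- so the only thing to assess is whether your reconstruction would actually work. Your overall architecture is the standard (and surely the intended) one: decompose the entropy over the total particle number $N(\eta)=\sum_i\eta_i$, control the within-fiber entropy by the canonical log-Sobolev inequality \eqref{eq:logsob} (this part of your sketch is correct and yields the $C_\LS k^2/2$ bulk term exactly as you say), and control the marginal entropy $H(\bar f\pi_\rho\,|\,\pi_\rho)$ by the site-$1$ flip, with the factor $k$ coming from the fact that only one of the $k$ Glauber flips is available.

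The genuine gap is in the last comparison, and the fix you propose does not work as stated. The inequality $\bigl(\sqrt{\bar f(j+1)}-\sqrt{\bar f(j)}\bigr)^2\le \bE\bigl[(\sqrt{f(\eta^1)}-\sqrt{f(\eta)})^2\,\big|\,N=j,\ \eta_1=0\bigr]$ is false: take $k=2$, $\rho=1/2$ and $f=c\,\mathbf 1\{\eta_1=0,\eta_2=1\}$ (suitably normalised); then the right-hand side at $j=0$ vanishes, since the flip sends $(0,0)$ to $(1,0)$ where $f=0$, while $\bar f(1)=c/2>0$. What the reverse triangle inequality actually gives is control of $\bigl|\sqrt{\bE[f\,|\,N=j,\eta_1=0]}-\sqrt{\bE[f\,|\,N=j+1,\eta_1=1]}\bigr|$, and the conditional laws appearing here are \emph{not} the fiber marginals $\bar f(j)$, $\bar f(j+1)$: the flip maps $\{N=j,\eta_1=0\}$ onto $\{N=j+1,\eta_1=1\}$, not onto the whole fiber $\{N=j+1\}$. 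Bridging $\bE[f\,|\,N=j+1,\eta_1=1]$ and $\bar f(j+1)$ requires re-using the exchange dynamics (a moving-particle/equivalence-of-ensembles step), which produces an extra error of order $k^2$ times the exchange Dirichlet form that must be absorbed into the bulk term of \eqref{eq:logsob-bound}. This absorption is the actual content of \cite[Proposition A.1]{DMOX21} and is missing from your argument. A minor additional point: the hypothesis \eqref{eq:assp-bd} is irrelevant here; the proposition concerns a fixed $\rho\in(0,1)$ with a constant $C_\rho$ allowed to depend on $\rho$, and \eqref{eq:assp-bd} only matters later when the result is applied with $\rho=\rho_\pm(t)$ uniformly in $t$.
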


We refer to \cite[Appendix A]{DMOX22} for the details of the proof.

\subsection{Block estimates}
\label{subsec:block}

Now we state the proofs of Proposition \ref{prop:one-block}, \ref{prop:h1} and \ref{prop:bd-one-block}.
They are based on Proposition \ref{prop:dir}, \eqref{eq:logsob} and \eqref{eq:logsob-bound}.
Since the size of the block varies in the proofs, we keep the subscription $K$ in $\bar\eta_{i,K}$, $\hat\eta_{i,K}$ and $\hat J_{i,K}$.

\begin{proof}[Proof of Proposition \ref{prop:one-block}]
Recall the space $\Omega_{k,\rho^*}$ defined in \eqref{eq:mic-suf} and the uniform measure $\nu^k(\cdot|\rho_*)$ on it.
Also define 
\begin{equation}
  \begin{aligned}
    &\bar\mu_{N,t}^{i,k}(\rho_*) := \mu_{N,t}\left\{(\eta_{i-k+1},\ldots,\eta_i)\in\Omega_{k,\rho_*}\right\}, \\
    &\mu_{N,t}^{i,k}(\eta_{i-k+1},\ldots,\eta_i|\rho_*) := \frac{\mu_{N,t}(\eta_{i-k+1},\ldots,\eta_i)}{\bar\mu_{N,t}^{i,k}(\rho_*)}. 
  \end{aligned}
\end{equation}

Recall the block averages $\bar\eta_{i,K}$, $\hat\eta_{i,K}$ and $\hat J_{i,K}$ defined in \eqref{eq:block}, \eqref{eq:smooth-block} and \eqref{eq:current}.
Note that
\begin{align}
  \left[ \hat J_{i,K}-J(\hat\eta_{i,K}) \right]^2 \le 2 \left[ \hat J_{i,K}-J(\bar\eta_{i+K,2K}) \right]^2 + 2 \big[J(\hat\eta_{i,K})-J(\bar\eta_{i+K,2K})\big]^2.
\end{align}
We prove the estimate for the two terms respectively.
Denote $h_i=\hat J_{i,K}-J(\bar\eta_{i+K,2K})$. 
The relative entropy inequality yields that 
\begin{equation}
\label{eq:rel-ent-ineq}
  \begin{aligned}
    \int_{\Omega_{2K,\rho_*}} h_i^2d\mu_{N,t}^{i+K,2K}(\,\cdot\,|\rho_*) \le \frac1{aK} \bigg[ H\left(\mu_{N,t}^{i+K,2K}(\,\cdot\,|\rho_*); \nu^{2K}(\,\cdot\,|\rho_*)\right)\\
    +\,\log\int e^{aKh_i^2}d\nu^{2K}(\,\cdot\,|\rho_*) \bigg], \quad \forall a>0,
  \end{aligned}
\end{equation}
where $H$ is the relative entropy for two probability measures $\mu$ and $\nu$:
\begin{align}
  H(\mu;\nu) := \int (\log\mu - \log\nu)d\mu.
\end{align}
The log-Sobolev inequality \eqref{eq:logsob} yields that with some constant $C$,
\begin{align}\label{eq:logsob1}
    H\left(\mu_{N,t}^{i+K,2K}(\,\cdot\,|\rho_*); \nu^{2K}(\,\cdot\,|\rho_*)\right) \le CK^2\fD_{N,\rho_*}^{i+K,2K}(t),
\end{align}
where the Dirichlet form in the right-hand side is defined as
\begin{align}
  \fD_{N,\rho_*}^{i,k}(t) := \frac12\sum_{\eta\in\Omega_{k,\rho_*}}
  \sum_{i'=1}^{k-1} \left(\sqrt{\mu_{N,t}^{i,k}(\eta^{i',i'+1}|\rho_*)}
  - \sqrt{\mu_{N,t}^{i,k}(\eta|\rho_*)}\right)^2. 
\end{align}
For the uniform integral, with standard argument we can show that
\begin{align}
\label{eq:exp}
  \log\int e^{aKh_i^2}d\nu^{2K}(\,\cdot\,|\rho_*) \le C,
\end{align}
for some sufficiently small but fixed $a$.
From \eqref{eq:rel-ent-ineq}, \eqref{eq:logsob1} and \eqref{eq:exp},
\begin{align}
  \int_{\Omega_{2K,\rho_*}} h_i^2d\mu_{N,t}^{i+K,2K}(\,\cdot\,|\rho_*) \le CK\fD_{N,\rho_*}^{i+K,2K}(t)+\frac CK,
\end{align}
for all $i=K$, ..., $N-K$ and $\rho_*$.
Integrating the estimate above with respect to $\bar\mu_{N,t}^{i+K,2K}(\rho_*)$ and summing up in $i$, we obtain that
\begin{equation}
  \begin{aligned}
    \sum_{i=K}^{N-K} \int_{\Omega_N} h_i^2d\mu_{N,t}
    &\le CK\sum_{i=K}^{N-K} \sum_{\rho_*} \bar\mu_{N,t}^{i,k}(\rho_*)\fD_{N,\rho_*}^{i+K,2K}(t) + \frac{CN}K\\
    &\le C'K^2\fD_{\exc,N}(t) + \frac{CN}K.
  \end{aligned}
\end{equation}
The desired estimate for $h_i$ then follows from Proposition \ref{prop:dir}.
The upper bound for $J(\hat\eta_{i,K})-J(\bar\eta_{i+K,2K})$ can be obtained by exactly the same argument.
\end{proof}

\begin{proof}[Proof of Proposition \ref{prop:h1}]
Observe that for $i=K$, $K+1$, ..., $N-K$, 
\begin{align}
  \nabla\hat\eta_{i,K} = \hat\eta_{i+1,K}-\hat\eta_{i,K} = \frac{\bar\eta_{i+K,K}-\bar\eta_{i,K}}{K}. 
\end{align}
Applying the same argument used in Proposition \ref{prop:one-block}, 
\begin{align}
    \bE_N \left[ \int_0^T \sum_{i=K}^{N-K} (\bar\eta_{i+K,K}-\bar\eta_{i,K})^2dt \right] \le C \left( \frac{K^2}{\sigma_N} + \frac NK \right). 
\end{align}
The conclusion then follows directly. 
\end{proof}

\begin{rem}
\label{rem:bd-block}
With the arguments in the previous proofs, we also get that
\begin{align}
  \bE_N \left[ \int_0^T \big|\nabla\hat\eta_{i,K}\big|^2dt \right] \le C \left( \frac1{\sigma_NK} + \frac1{K^3} \right),
\end{align}
for each $i=K$, $K+1$, ..., $N-K$.
\end{rem}

\begin{proof}[Proof of Proposition \ref{prop:bd-one-block}]
Recall that $\nu_\rho$ is the product Bernoulli measure with rate $\rho$.
To shorten the notation, let $\alpha_{N,t}$ be the marginal distribution of $\{\eta_1, \eta_2, \ldots, \eta_{2K-1}\}$ under $\mu_{N,t}$.
By the relative entropy inequality, for any $a>0$,
\begin{equation}
  \begin{aligned}
    \int_{\Omega_N} \big|\hat\eta_{K,K}-\rho_-(t)\big|^2d\mu_{N,t} \le\;&\frac1{aK} \bigg[ H\big(\alpha_{N,t};\nu_{\rho_-(t)}\big)\\
    &+ \log\int e^{aK|\hat\eta_{K,K}-\rho_-(t)|^2}d\nu_{\rho_-(t)} \bigg].
  \end{aligned}
\end{equation}
Applying Proposition \ref{prop:logsob-bound} on the box $\{\eta_1,\ldots,\eta_{2K-1}\}$,
\begin{align}
  H\big(\alpha_{N,t};\nu_{\rho_-(t)}\big) %\le \frac{CK^2}2\sum_\eta \sum_{i'=1}^{2K-2} \left( \sqrt{\beta_{N,t}(\eta^{i',i'+1})}-\sqrt{\beta_{N,t}(\eta)} \right)^2 \nu_{\rho_-(t)}(\eta) \\
  %+ \frac{CK}2\sum_\eta \rho_-(t)^{1-\eta_1}(1-\rho_-(t))^{\eta_1} \left( \sqrt{\beta_{N,t}(\eta^1)}-\sqrt{\beta_{N,t}(\eta)} \right)^2 \nu_{\rho_-(t)}(\eta) \\
  \le CK^2\fD_{\exc,N}(t) + CK\fD_{-,N}(t),
\end{align}
where %$\beta_{N,t} = \alpha_{N,t}/\nu_{\rho_-(t)}$ is the density function and
$\fD_{\exc,N}(t)$, $\fD_{-,N}(t)$ are the Dirichlet forms defined in \eqref{eq:dir-exc} and \eqref{eq:dir-bd}.
On the other hand, the canonical integral as  can be bound similarly to \eqref{eq:exp} as
\begin{align}
  \log\int e^{aK|\hat\eta_{K,K}-\rho_-(t)|^2}d\nu_{\rho_-(t)} \le C, 
\end{align}
for sufficiently small $a$.
Therefore,
\begin{align}
  \int_{\Omega_N} \big|\hat\eta_{K,K}-\rho_-(t)\big|^2d\mu_{N,t} \le C \left( K\fD_{\exc,N}(t)+\fD_{-,N}(t)+\frac1K \right).
\end{align}
The estimate for $\hat\eta_{N-K+1,K}$ is proved in the same way.
Finally, in order to close the proof we only need to apply Proposition \ref{prop:dir}.
\end{proof}

%%%%%%%%%%%%%%%%%%%%%%%%%%%%%%%%%%%%%%%%
%references with default bookmark

\titleformat{\section}[hang]	
{\bfseries\large}{}{0em}{}[]
\titlespacing*{\section}{0em}{2em}{1.5em}

\section{References}	
\renewcommand{\section}[2]{}

%\begin{thebibliography}{99}
%\end{thebibliography}

%%%%%%%%%%%%%%%%%%%%%%%%%%%%%%%%%%%%%%%%
%author

\vspace{2em}
\noindent{\large Lu \textsc{Xu}}

\vspace{0.5em}
\noindent Gran Sasso Science Institute\\
Viale Francesco Crispi 7, L'Aquila, Italy\\
{\tt lu.xu@gssi.it}

\end{document}